\numberwithin{equation}{section}
\numberwithin{figure}{section}
\newtheorem{thm}{Theorem}[section]
\newtheorem{corollary}[thm]{Corollary}
\newtheorem{definition}[thm]{Definition}
\newtheorem{lemma}[thm]{Lemma}
\newtheorem{notation}[thm]{Notation}
\newtheorem{prop}[thm]{Proposition}
\newtheorem{proposition}[thm]{Proposition}
\newtheorem{rmrk}[thm]{Remark}
\newtheorem{remark}[thm]{Remark}
\newtheorem{theorem}[thm]{Theorem}
\newcommand{\baru}{\bar{u}}
\newcommand{\calc}{\mathcal{B}}
\newcommand{\cala}{\mathcal{A}_{-}}
\newcommand{\calb}{\mathcal{A}}
\newcommand{\cald}{\mathcal{C}}
\newcommand{\cale}{\mathcal{C}_{+}}
\newcommand{\de}{\mathit{\Delta}E_2^{\phi,\,\xi}}
\newcommand{\deone}{\mathit{\Delta}E_1^{\phi,\,\xi}}
\newcommand{\ehat}{  \hat{\mathcal{E}}_\phi}
\newcommand{\einf}{\mathrm{ess}\,\mathrm{inf}}
\newcommand{\ephi}{\mathcal{E}_{\phi}^\xi}
\newcommand{\eps}{\varepsilon}
\newcommand{\esup}{\mathrm{ess}\,\mathrm{sup}}
\newcommand{\Ew}{\mathit{\Delta}E_{\omega,\phi}^\xi}
\newcommand{\ez}{\mathcal{E}_0^\xi}
\newcommand{\neps}{ \mathcal{N}_\eps(\um)}
\newcommand{\per}{{\rm Per}}
\newcommand{\LM}{\mathcal{L}}
\newcommand{\HM}{\mathcal{H}}
\newcommand{\tphild}{\mathbb{T}_{\phi L}^{d}}
\newcommand{\R}{\mathbb{R}}
\newcommand{\sgn}{{\rm sgn}}
\newcommand{\sigmad}{\sigma _d}
\newcommand{\tphil}{\mathbb{T}_{\phi L}}
\newcommand{\uo}{u_{\omega,\phi}}
\newcommand{\uws}{u_{{\omega_*},\phi}}
\newcommand{\umo}{u_\omega^m}
\newcommand{\um}{u_{m,\phi}}
\newcommand{\us}{u_{s,\phi}}
\newcommand{\ust}{\tilde{u}_{s,\phi}}
\newcommand{\xid}{\tilde{\xi}_d}
\newcommand{\intfil}{\int_{\tphil}}
\renewcommand{\leq}{\leqslant}
\renewcommand{\geq}{\geqslant}
\def\Xint#1{\mathchoice
{\XXint\displaystyle\textstyle{#1}}%
{\XXint\textstyle\scriptstyle{#1}}%
{\XXint\scriptstyle\scriptscriptstyle{#1}}%
{\XXint\scriptscriptstyle\scriptscriptstyle{#1}}%
\!\int}
\def\XXint#1#2#3{{\setbox0=\hbox{$#1{#2#3}{\int}$}
\vcenter{\hbox{$#2#3$}}\kern-.5\wd0}}
\def\dashint{\Xint-}
\begin{document}

\title[Critical points of the Cahn-Hilliard energy]
{Existence and properties of certain critical points of the Cahn-Hilliard energy}
\vspace{1in}
\author{Michael Gelantalis}
\address{Michael Gelantalis, RWTH Aachen University}
\email{gelantalis@math1.rwth-aachen.de}
\author{Alfred Wagner}
\address{Alfred Wagner, RWTH Aachen University}
\email{wagner@instmath.rwth-aachen.de}
\author{Maria G. Westdickenberg}
\address{Maria G. Westdickenberg, RWTH Aachen University}
\email{maria@math1.rwth-aachen.de}
\subjclass[2010]{Primary: 35B38, 49J40; Secondary: 35B06}
\begin{abstract}
The Cahn-Hilliard energy landscape on the torus is explored in the critical regime of large system size and mean value close to $-1$. Existence and properties of a ``droplet-shaped'' local energy minimizer are established. A standard mountain pass argument leads to the existence of a saddle point whose energy is equal to the energy barrier, for which a quantitative bound is deduced. In addition, finer properties of the local minimizer and appropriately defined constrained minimizers are deduced. The proofs employ the $\Gamma$-limit (identified in a previous work), quantitative isoperimetric inequalities, variational arguments, and Steiner symmetrization.
\end{abstract}
\maketitle
\section{Introduction}\label{S:intro}
In this paper we explore the infinite dimensional energy landscape associated to the Cahn-Hilliard \cite{CH} energy
\begin{align}\label{Energy}
E(u):=\int _{\Omega}\frac{1}{2}|\nabla u|^2+G(u)\,dx,
\end{align}
where  $G$ is a double-well potential, $\Omega\subset\R^d$ for $d\geq 2$, and the functions $u$ belong to
\begin{align}\notag
\Big\{u\in H^1\cap L^4(\Omega): \dashint_\Omega u \, dx=m\Big\},
\end{align}
for mean value $m$ strictly between the minima of $G$.
For simplicity of presentation, suppose that the minimizers of $G$ are normalized to be $\pm 1$. The energy landscape, which is a fundamental model for phase separation, reflects a competition between the energy and the mean constraint. Indeed, the mean constraint rules out the absolute energy minimizers $u\equiv \pm 1$ and raises the question of the lowest achievable energy given the constraint. One may also ask about the existence and ``shape'' of additional local minimizers and the height of the energy barriers surrounding them.

The study of energy barriers and the related critical points is driven by the issue of \emph{nucleation and growth} phenomena in physics and other applications.
For instance, when nearly homogeneous mixtures of alloys, glasses, or polymers are quenched, they tend to separate into distinct preferred phases.
When the initial homogeneous state is a local energy minimizer, the associated parameter regime is called the nucleation and growth regime (to be distinguished from the spinodal regime). In the Cahn-Hilliard model, which has been widely studied in experiments, numerical simulations, and analysis, the nucleation phenomenon consists of the formation and growth of small droplet-like regions of one phase inside a nearly homogeneous bulk phase. The initial formation and growth of droplets is often modelled by the stochastic Cahn-Hilliard-Cook equation \cite{C}.

Nucleation behavior was described already by Cahn and Hilliard in \cite{CH3}, where they discuss the formation of a so-called ``critical nucleus,'' a droplet-like state whose radius is exactly such that an infinitesimal increase in size leads under deterministic forces to growth and relaxation to a similarly droplet-like local minimizer. Moreover they point out the importance of the height of the energy barrier, which they define as the energy difference between the homogeneous state and the saddle point. In terms of mathematical analysis, the fact that nucleation events take place by way of the saddle point of least energy was put on rigorous ground by the theory of large deviations \cite{FW}.
Deriving accurate information about the critical nucleus experimentally is extremely challenging and there has been a considerable effort to study the nucleation problem numerically \cites{Sc,W,ZCD,CLEZS,ZCD2,DEPSW,LZZ,ZZD}.


In terms of analysis, most previous work has studied the Cahn-Hilliard energy
\begin{align}
  E_\phi(u):=\int_\Omega \frac{\phi}{2}|\nabla u|^2+\frac{1}{\phi}G(u)\,dx,\label{ep}
\end{align}
for $\Omega$ and mean value $m$ fixed and $\phi$ small.
In the so-called critical parameter regime studied in \cites{BGLN,CCELM,GW} and the present paper, the analysis is subtle because the energy of the homogeneous state $u\equiv m$, the energy of a droplet-like local minimizer, and the energy barrier in between these two states are all of the same order.

Our results include the existence and symmetry properties of a nonuniform local minimizer, existence of a saddle point, and quantitative bounds on the droplet shape of critical points in the form of their Fraenkel asymmetry and the $L^2$ distance to a sharp-interface droplet profile. Our work uses variational arguments and $\Gamma$-convergence in a fundamental way; indeed, an
objective of the paper is to explore the use of $\Gamma$-limits and error bounds to glean information about the shape of the energy landscape. In addition we make use of quantitative isoperimetric inequalities and Steiner symmetrization. We will give the results as we go along, once we have introduced the necessary notation and tools. For the reader who is eager to turn to the main results, we refer to theorems \ref{t:exist}, \ref{t:saddle}, and \ref{t:spheres} along with propositions \ref{t:exus}, \ref{prop:steiner}, and \ref{Asym.stronger}.

\subsection{The critical parameter regime}
To be concrete, let $\Omega=\mathbb{T}_L$ be the $d$-dimensional flat torus with side length $L$ and consider mean value $m=-1+\phi$. For simplicity, we set
\begin{align}
G(u)=\frac{(1-u^2)^2}{4}, \label{gbl}
\end{align}
but more general nondegenerate double-well potentials are possible (see subsection \ref{ss:org}). For $m$ close to $-1$ and periodic boundary conditions, it is easy to see that the uniform state $\baru\equiv m$ is a local energy minimizer. Determining whether it is the global minimizer is more subtle, as we now explain. Define the constant
\begin{align}\label{xi.sub.d}
\xi_d:=c_0^{d/(d+1)}\sigmad^{1/(d+1)}\frac{d+1}{4^{d/(d+1)}d^{1/(d+1)}},
\end{align}
where here and throughout, $c_0$ denotes
\begin{align*}
  c_0=\int_{-1}^{1}\sqrt{2G(s)}\,ds\overset{\eqref{gbl}}=\frac{2\sqrt{2}}{3},
\end{align*}
and $\sigmad$ stands for the perimeter (surface area) of the $(d-1)-$ unit sphere in $\mathbb{R}^d$.
It was shown in \cites{BGLN,CCELM} that the scaling $\phi\sim L^{-d/(d+1)}$ is critical in the following sense. For
\begin{align}
  \phi=\xi L^{-d/(d+1)}\label{crit}
\end{align}
with $\xi<\xi_d$, $\baru$ is the global minimizer of $E$ in $\mathbb{T}_L$ for  $\phi$ sufficiently small. For \eqref{crit} with $\xi>\xi_d$, on the other hand, $\baru$ is \emph{not} the global minimizer, and moreover the global minimizer is close in $L^p(\mathbb{T}_L)$ to a droplet-shaped function (i.e., a function that is close to $1$ on a ball $B$ and close to $-1$ on $\mathbb{T}_L\setminus B$).

In order to look more closely at the energy difference to the uniform state, one would like to analyze the rescaled energy gap
\begin{align}
\ephi(u)=\frac{E(u)-E(\bar{u})}{\phi^{-d+1}},\notag
\end{align}
which, recalling the mean constraint and rescaling  space by a factor of $\phi$, can also be written as
\begin{align}
\ephi(u)&=\int_{\tphil}\frac{\phi}{2}|\nabla u|^2+\frac{1}{\phi}\Big(G(u)-G(-1+\phi)\Big)\,dx\notag\\
&= \int_{\tphil} \underbrace{\frac{\phi}{2}|\nabla u|^2 +\frac{1}{\phi}\Big(G(u)-G(-1+\phi)-G'(-1+\phi)(u-(-1+\phi)) \Big)}_{=:e_\phi(u)}\,dx.\label{ephi}
\end{align}
We will restrict the space of functions in $H^1\cap L^4(\tphil)$ with the norm $\|\nabla u\|_{L^2(\tphil)}+\|u\|_{L^4(\tphil)}$
 to the affine subspace
\begin{align}\label{H-phi}
X_\phi:=\Big\{u\in H^1\cap L^4(\tphil): \dashint_{\tphil} u \, dx=-1+\phi\Big\}.
\end{align}

In \cite{GW}, the first and third authors establish $\Gamma$-convergence as $\phi\to 0$ to
\begin{align}\label{F_0}
\lefteqn{\ez(u):=}\notag\\
&\begin{dcases}
c_0{\rm Per}(\{u=1\})-4|\{u=1\}|+4\frac{|\{u=1\}|^2}{\xi^{d+1}}& \text{if}\;u=\pm 1 \,\text{a.e.}\,\,\text{and}\,\,{\rm Per}(\{u=1\})<\infty\\
+\infty&\text{otherwise}
\end{dcases}
\end{align}
in the $-1+L^p(\R^d)$ topology for any $p\in(1,\infty)$ (see \cite{GW} for details). Since the torus $\tphil$ converges to $\R^d$ in this limit, one does not have the compactness that is available in the classical problem of Modica and Mortola, cf. \cites{MM,Mo,S}.

The $\Gamma$-convergence result of \cite{GW} is based on matching (at leading order) upper and lower bounds for the rescaled energy gap; see subsection \ref{ss:lowup} for a summary.
These bounds in
the regime
\begin{align*}
  \xi\in(\xi_d,\infty),\quad \phi=\xi\,L^{-d/(d+1)},\quad \phi\ll 1
\end{align*}
imply the existence of the minimizer already pointed out in \cites{BGLN,CCELM} and imply in addition the existence of a minimum energy saddle point ``in between'' this minimizer and $\baru$.

Because the classical isoperimetric inequality (cf.\ theorem \ref{Regular.Isop}) says that the perimeter functional is minimized on balls, minimization of \eqref{F_0} as a function of the measure $|\{u=1\}|$ leads to the function $f_\xi :[0,\infty)\to \mathbb{R}$ defined via
\begin{align}
f_\xi(\nu):=\bar{C}_1 \nu^{(d-1)/d}-4\nu+4\xi^{-(d+1)}\nu^2,\notag
\end{align}
where
\begin{align}
\bar{C}_1:=c_0\sigmad^{1/d}d^{(d-1)/d}.\label{barc1}
\end{align}

In terms of $f_\xi$ it is easy to understand the constant $\xi_d$ in \eqref{xi.sub.d}: It is exactly the value of $\xi$ at which the strictly positive minimizer $\nu_m$ of $f_\xi$ changes from being a local minimizer to the global minimizer---which explains heuristically why it is at this point that the global minimizer of the energy changes from being $\baru$ (analogous to $\nu=0$) to being a nonuniform ``droplet-like'' state (analogous to $\nu=\nu_m$).
A second important value of $\xi$ that will play a role in our paper is the (saddle-node) bifurcation point
\begin{align}
\tilde{\xi_d}:&=c_0^{d/(d+1)}\sigmad^{1/(d+1)}\left(1-\frac{1}{d}\right)^{d/(d+1)}\frac{2^{1/(d+1)}(d+1)}{4^{d/(d+1)}d^{1/(d+1)}},\label{xid}
\end{align}
which is such that for $\xi<\tilde{\xi_d}$ the function $f_\xi$ has no positive local extrema, while for $\xi>\xid$, $f_\xi$ has a strictly positive local maximum and a strictly positive local minimum. See figure \ref{figurezz} for an illustration.

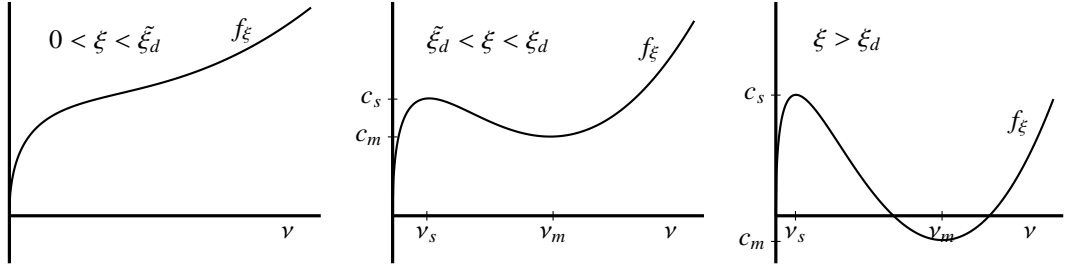
\begin{figure}
\begin{tikzpicture}[scale=0.63]

\begin{scope}[shift={(-8.5,0)}]
\draw [black, very thick] (5.5,0) -- (-1,0);
\draw [black, very thick] (-1,-1) -- (-1,4.5);
\draw[black, thick, domain=-1:5.3, samples=500] plot [smooth] (\x, {5*sqrt(0.5*(\x+1))-3*0.5*(\x+1)+0.5*0.25*(\x+1)*(\x+1)});
\node [below] at (4.8,0) {\small{$\nu$}};
\node [left] at (4.3,3.9) {\small{$f_\xi$}};
\node at (1,3.7) {\small{$0<\xi<\tilde{\xi_d}$}};
\end{scope}

\draw [black, very thick] (5,0) -- (-1.5,0);
\draw [black, very thick] (-1.5,-1) -- (-1.5,4.5);
\draw[black, thick, domain=-1.5:4.8, samples=500] plot [smooth] (\x, {9.7*sqrt(0.4*(\x+1.5))-10.15*0.4*(\x+1.5)+2*0.18*(\x+1.5)*(\x+1.5)});
\node [below] at (1.85,0) {\small{$\nu_m$}};
\node [below] at (4.3,0) {\small{$\nu$}};
\node at (0.5,3.7) {\small{$\tilde{\xi_d}<\xi<\xi_d$}};
\node [left] at (-1.5,1.66) {\small{$c_m$}};
\node [left] at (-1.5,2.455) {\small{$c_s$}};
\draw (-1.6,2.455) -- (-1.4,2.455);
\draw (-1.6,1.66) -- (-1.4,1.66);
\node [left] at (4.3,3.5) {\small{$f_\xi$}};
\draw (2.35-0.5,0.1) -- (2.35-0.5,-0.1);
\draw (-0.285-0.5,0.1) -- (-0.285-0.5,-0.1);
\node [below] at (-0.285-0.5,0) {\small{$\nu_s$}};
\draw [black, very thick] (12+0.5,0) -- (6+0.5,0);
\draw [black, very thick] (6+0.5,-1) -- (6+0.5,4.5);
\draw[black, thick, domain=6+0.5:11.8+0.5, samples=500] plot [smooth] (\x, {15*sqrt(0.3*(\x-6-0.5))-23*0.3*(\x-6-0.5)+7.5*0.09*(\x-6-0.5)*(\x-6-0.5)});
\node at (8,3.7) {\small{$\xi>\xi_d$}};
\node [left] at (11.5+0.5,1.95) {\small{$f_\xi$}};
\draw (9.47+0.5,0.1) -- (9.47+0.5,-0.1);
\node [below] at (9.47+0.5,0) {\small{$\nu_m$}};
\draw (5.9+0.5,2.54) -- (6.1+0.5,2.54);
\draw (6.41+0.5,-0.1) -- (6.41+0.5,0.1);
\draw (5.9+0.5,-0.54) -- (6.1+0.5,-0.54);
\node [left] at (6+0.5,-0.54) {\small{$c_m$}};
\node [left] at (6+0.5,2.54) {\small{$c_s$}};
\node [below] at (11.3+0.5,0) {\small{$\nu$}};
\node [below] at (6.41+0.5,0) {\small{$\nu_s$}};
\end{tikzpicture}
\caption{For $0<\xi<\tilde{\xi_d}$ the function $f_\xi$ has no positive local extrema. The global minimizer of $f_\xi$ is zero for $\xi<\xi_d$ and $\nu_m>0$ for  $\xi>\xi_d$.}\label{figurezz}
\end{figure}

\begin{notation}\label{not.1}
For $\xi>\xid$, we denote the strictly positive maximum and minimum of $f_\xi$ by $\nu_s,\,\nu_m$, respectively, and the corresponding function values by $$c_s:=f_\xi(\nu_s),\qquad c_m:=f_\xi(\nu_m).$$
It is easy to check that
\begin{align}
1\lesssim \nu_s< \nu_m<\frac{\xi^{d+1}}{2}\qquad\text{for $\xi\in (\xid,\xi_d]$,} \label{11.1}
\end{align}
where $\lesssim$ and related notation are explained in notation \ref{not:oh}.
Introducing
\begin{align}
  \gamma_0^2:=4(\nu_m-\nu_s),\label{gm0}
\end{align}
we note for future reference that
\begin{equation}\label{gamma_0.1}
\nu_m\,\, \text{is the strict minimizer of}\,\, f_\xi \,\, \text{on the interval}\,\, \Big[\nu_m-\frac{\gamma_0^2}{4},\nu_m+\frac{\gamma_0^2}{4}\Big].
\end{equation}
In addition, we define
 \begin{align*}
    \Psi(x;\omega)=\begin{cases}
     +1& x\in B_{\omega}(0)\\
     -1& x\in \R^d\setminus B_{\omega}(0),
   \end{cases}
 \end{align*}
where $B_{\omega}(0)$ is a ball with volume $\omega$ and center $0$. We will abbreviate
\begin{align}\label{defn.Psi_m}
\Psi_s:=\Psi(\cdot\,;\nu_s),\qquad \Psi_m:=\Psi(\cdot\,;\nu_m).
\end{align}
Later (in  lemmas~\ref{la} and~\ref{la2}) we verify that $\Psi_s$ and $\Psi_m$ are a saddle point and local minimum of the limit functional. For this reason we will sometimes refer to $\Psi_s$, $\Psi_m$ as the \uline{limit saddle point} and \uline{limit local minimizer}.
\end{notation}

\begin{notation} We compensate for the translation invariance of the problem by using
 \begin{align*}
 |u-\Psi_m|_{\R^d}&:=\inf_{x_0\in\R^d}||u-\Psi_m(\cdot-x_0)||_{L^2(\R^d)},\\
 |u-\Psi_m|_{\tphil}&:=\inf_{x_0\in\tphil}||u-\Psi_m(\cdot-x_0)||_{L^2(\tphil)},
 \end{align*}
 where in the second equation, $\Psi_m$ is understood in the periodic sense, i.e., we restrict to $[-\phi L/2,\phi L/2)^d$ and then take the periodic continuation.
 \end{notation}

\subsection{First result: a droplet-shaped local minimum}
In order to state our results, we introduce the following ``volume'' functional $\nu(\cdot)$, also used in \cite{GW}, which will play an important role in our analysis.
\begin{notation}\label{notvol}
Let $\kappa:=\phi^{1/3}$. As in \cite{GW}, we define a smooth partition of unity $\chi_1, \chi_2$, and $\chi_3 :\mathbb{R}\to [0,1]$ such that $\chi_1+\chi_2+\chi_3=1$ and
\begin{align*}
\chi_1(t)&=
\begin{cases}
1\quad&\text{for}\quad t\leq -1+\kappa\\
0\quad&\text{for}\quad t\geq -1+2\kappa,
\end{cases}\\
\chi_2(t)&=
\begin{cases}
1\quad&\text{for}\quad -1+2\kappa\leq t\leq 1-2\kappa\\
0\quad&\text{for}\quad t\leq -1+\kappa \quad \text{and} \quad t\geq 1-\kappa,
\end{cases}\\
\chi_3(t)&=
\begin{cases}
1\quad&\text{for}\quad t\geq 1-\kappa\\
0\quad&\text{for}\quad t\leq 1-2\kappa.
\end{cases}
\end{align*}
We use $\chi_3$ to define the continuous ``volume-type'' functional $\nu:X_\phi\to \mathbb{R}$ via
\begin{align}
\nu(u):=\int_\Omega \chi_3(u)\,dx.\label{voltp}
\end{align}
This functional roughly measures the volume of $u\approx +1$. We will occasionally refer to the sharp-interface analogue
 \begin{align*}
   \nu_0(u)=|\{u=+1\}|.
 \end{align*}
\end{notation}

Our first theorem exploits  information about the energy and its connection to the $\Gamma$-limit in order to prove the existence of a droplet-shaped local minimizer. For simplicity (and to avoid a $\xi$-dependence in our constants), we restrict our attention to $\xi\in(\tilde{\xi_d},\xi_d]$. This in any event is the more interesting regime, since it has not yet been explored in \cites{BGLN,CCELM} or elsewhere, to the best of our knowledge, and since we identify \emph{a local but nonglobal minimizer}. However one could use the same approach to establish existence and properties of the global minimizer for $\xi>\xi_d$.

\begin{thm}\label{t:exist}
Consider $\xi\in(\tilde{\xi_d},\xi_d]$ and the critical scaling \eqref{crit}. For $\phi$ sufficiently small, there exists a nonconstant local minimizer $\um$ of $\ephi$. This function minimizes $\ephi$ over
\begin{align}
  |u-\Psi_m|_{\tphil}\leq\gamma_0,\label{uni}
\end{align}
where $\Psi_m$ is the limit local minimizer defined in \eqref{defn.Psi_m} and $\gamma_0$ is the constant from \eqref{gm0}.

The local minimizer $\um$ is well-approximated by $\Psi_m$ in the sense that, for every  $\gamma\in(0,\gamma_0)$, there exists $\phi_0>0$ such that
\begin{align}
  |\um-\Psi_m|_{\tphil}\leq \gamma\qquad \text{for all }0<\phi\leq \phi_0.\label{ump}
\end{align}
In addition the closeness of the local minimizer $\um$ and the limit local minimizer $\Psi_m$ in volume and energy are estimated by
\begin{align}
  \left|\ephi(\um)-c_m\right|&\lesssim \phi^{1/3},\label{en.cl}\\
   |\nu(\um)-\nu_m|&\leq C\phi^{1/6},\label{num}
\end{align}
where $C$, in addition to depending as usual on $d$, depends on $\xi$  (and is large for $\xi$ near $\tilde{\xi}_d$).

\end{thm}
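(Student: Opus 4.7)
The plan is to realize $\um$ as the minimizer of $\ephi$ over the closed constraint set $K_\phi := \{u \in X_\phi : |u-\Psi_m|_{\tphil}\leq \gamma_0\}$, and then to show that the minimizer lies in the \emph{interior} of $K_\phi$, so that it is a genuine local minimizer of the unconstrained problem. The structural fact driving everything is built into the choice \eqref{gm0}--\eqref{gamma_0.1}: on the boundary $|u-\Psi_m|_{\R^d}=\gamma_0$, the limit functional $\ez$ exceeds $c_m$ by a positive amount.

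Existence on $K_\phi$ for fixed $\phi$ follows by the direct method. A minimizing sequence $u_n$ has $\ephi(u_n)$ bounded, yielding uniform $H^1 \cap L^4$ bounds on $\tphil$; after translating each $u_n$ to realize the infimum defining $|u_n-\Psi_m|_{\tphil}$ (which leaves $\ephi$ invariant), I extract a weak $H^1$ / strong $L^2$ limit $\um \in X_\phi$, which lies in $K_\phi$ by strong $L^2$-convergence and minimizes $\ephi$ by weak lower semicontinuity. The $\Gamma$-limsup recovery sequence of \cite{GW} then supplies a competitor $\tilde u_\phi \in K_\phi$ with $\ephi(\tilde u_\phi) \to c_m$, giving $\ephi(\um) \leq c_m + o(1)$.

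For the interior property I argue by contradiction: suppose along some $\phi_k \to 0$ that $|u_{m,\phi_k}-\Psi_m|_{\tphil}=\gamma_0$. Boundedness of $\ephi$ yields uniform perimeter bounds on $\{u_{m,\phi_k}\approx 1\}$, so after translation a subsequence has a $-1+L^1(\R^d)$ limit $u_\ast$ with $|u_\ast-\Psi_m|_{\R^d}=\gamma_0$, and the $\Gamma$-liminf of \cite{GW} gives $\liminf \ephi(u_{m,\phi_k}) \geq \ez(u_\ast)$. The elementary symmetric-difference bound $|u_\ast-\Psi_m|_{\R^d}^2 \geq 4|\nu_0(u_\ast)-\nu_m|$ forces $|\nu_0(u_\ast)-\nu_m|\leq \gamma_0^2/4$; by \eqref{gamma_0.1} this yields $f_\xi(\nu_0(u_\ast))\geq c_m$, with equality only at $\nu_0(u_\ast)=\nu_m$; and by the classical isoperimetric inequality $\ez(u_\ast)\geq f_\xi(\nu_0(u_\ast))$, with equality only if $\{u_\ast=1\}$ is a ball. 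Since $u_\ast\ne \Psi_m$ up to translation (as $\gamma_0>0$), at least one inequality is strict, so $\ez(u_\ast)>c_m$, and a uniform gap along the boundary of $K_\phi$ (via BV-compactness) contradicts $\ephi(\um)\to c_m$. The same argument applied to the constant competitor gives $|\baru-\Psi_m|_{\tphil}\to 2\sqrt{\nu_m}>\gamma_0$, so $\baru\notin K_\phi$ and $\um$ is non-constant. The $L^2$-closeness \eqref{ump} for any $\gamma<\gamma_0$ is obtained analogously: any sequence with $|\um-\Psi_m|_{\tphil}>\gamma$ would produce a limit $u_\ast\ne \Psi_m$ with $\ez(u_\ast)\leq c_m$, contradicting the strict minimality of $\Psi_m$ on $|u-\Psi_m|_{\R^d}\leq \gamma_0$.

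For the quantitative refinements, the upper bound in \eqref{en.cl} comes from evaluating $\ephi$ on the recovery profile of \cite{GW} (a $\tanh$-type interface of width $O(\phi)$ around a sphere of volume $\nu_m$, with a mean-value correction), whose error relative to $\ez$ is controlled by the partition-of-unity scale $\kappa=\phi^{1/3}$ of notation~\ref{notvol}; the matching lower bound comes from the quantitative $\Gamma$-liminf of subsection~\ref{ss:lowup}. For \eqref{num}, I would upgrade \eqref{gamma_0.1} to a local quadratic bound $f_\xi(\nu)\geq c_m + c(\xi)(\nu-\nu_m)^2$ and combine it with a quantitative lower bound $\ephi(\um)\geq f_\xi(\nu(\um))-O(\phi^{1/3})$ and \eqref{en.cl}; the resulting $(\nu(\um)-\nu_m)^2\lesssim \phi^{1/3}$ is the square of \eqref{num}. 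I expect the main technical obstacle to be establishing the quantitative lower bound at the sharp rate $\phi^{1/3}$: balancing the isoperimetric deficit against the cutoff error without losing constants, using the framework of \cite{GW}, is the delicate step that controls both \eqref{en.cl} and \eqref{num}.
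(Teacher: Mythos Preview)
Your overall architecture matches the paper's: minimize over $K_\phi=\{|u-\Psi_m|_{\tphil}\leq\gamma_0\}$ by the direct method, use the recovery construction of proposition~\ref{prop.upper.bound} to get $\ephi(\um)\leq c_m+o(1)$, and show that functions on the annulus $\gamma\leq|u-\Psi_m|_{\tphil}\leq\gamma_0$ carry strictly more energy, forcing $\um$ into the interior. The quantitative part of your outline for \eqref{en.cl} and \eqref{num} is also essentially what the paper does (combine proposition~\ref{lemma.lower.bound} with the upper bound and a Taylor expansion of $f_\xi$ at $\nu_m$).

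There is, however, a genuine gap in your contradiction step. You claim that along $\phi_k\to 0$ with $|u_{m,\phi_k}-\Psi_m|_{\tphil}=\gamma_0$ you can extract a limit $u_\ast$ on $\R^d$ with $|u_\ast-\Psi_m|_{\R^d}=\gamma_0$. This is not justified: the torus $\tphil$ grows to $\R^d$, so the only compactness available is $L^2_{\mathrm{loc}}$ (cf.\ lemma~\ref{l:mm}), and the $L^2$-distance can \emph{drop} in the limit because part of the set $\{u_{\phi_k}\approx +1\}$ may drift to infinity. In that case $u_\ast$ could well be (a translate of) $\Psi_m$ itself, with $\ez(u_\ast)=c_m$, and no contradiction ensues. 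The paper flags exactly this issue (see the discussion preceding lemma~\ref{l:mm}) and resolves it in two ways. The heavier route is lemma~\ref{lc}: one tracks the escaped mass $\beta/4$ explicitly and shows that its perimeter cost $\bar C_1(\beta/4)^{(d-1)/d}$ survives in the $\liminf$, so that the limiting energy is $\ez(\tilde u)$ for a function $\tilde u$ that genuinely sits at distance $\gamma_0$ from $\Psi_m$. The lighter route, and the one actually used in the proof of theorem~\ref{t:exist}, is lemma~\ref{l:alt}: work directly at finite $\phi$, split into the cases $|\nu(u)-\nu_m|>\gamma^2/8$ (where $f_\xi$ alone gives the gap) and $|\nu(u)-\nu_m|\leq\gamma^2/8$ (where the $L^2$-constraint forces $\lambda(\{u>0\})\gtrsim\gamma^2$, and the quantitative isoperimetric inequality on the torus \eqref{SharpIsoper2} feeds a positive term into $I(u)$ in \eqref{lower.bound}). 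Either of these replaces your unsubstantiated compactness claim; without one of them the argument does not close.
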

\begin{remark}[Case of equality] We remark that the case $\xi=\xi_d$ is not excluded. In \cites{BGLN,CCELM}, where the \emph{global minimizer} is studied, only $\xi<\xi_d$ and $\xi>\xi_d$ are considered. In our setting, because we are interested in \emph{local minimizers}, there is no reason to exclude $\xi=\xi_d$, the crossover point at which global minimality is traded from one minimizer to the other. Although our results do not identify the global minimizer, they do imply that any global minimizer is $L^2$ close to $\bar{u}$ or $\Psi_m$.
\end{remark}
\begin{rmrk}[Approximate strictness of $\um$]\label{rem:strictmin}
One can deduce from the theorem together with lemma \ref{l:alt} below that $\um$ is ``approximately a strict minimizer'' in the sense that
for every $\gamma\in (0,\gamma_0]$, there exists $\delta>0$ and $\phi_0>0$ such that for all $\phi \leq\phi_0$ there holds
\[
|u-\um|_{\tphil}=\gamma \qquad\Rightarrow\qquad \ephi(u)>\ephi(\um)+\delta.
\]
\end{rmrk}

\begin{remark}[Quantified closeness to the sharp-interface minimum] We improve from \eqref{ump} to the quantified estimate  in \eqref{visa} below.
\end{remark}

One may expect---via a symmetrization argument---to show that $\um$ is spherically symmetric. On $\R^d$ symmetrization leads indeed to spherical symmetry. The periodic setting ``frustrates'' the system, however, preventing  spherical symmetry of sets whose volume is too large. In subsection \ref{ss:steinuo}, we use Steiner symmetrization on the torus to deduce additional information about local energy minimizers. Meanwhile, the variational arguments that lead to \eqref{ump} and the quantification in theorem \ref{t:spheres} provide a measure of the deviation from sphericity that is forced by confinement to the torus $\tphil$.

\subsection{Second result: Towards the critical nucleus}
In proposition \ref{t:exus} below, we use theorem \ref{t:exist}, a lower bound on the energy barrier around $\um$, and a mountain pass argument to deduce the existence of a saddle point with energy equal to this barrier.
As mentioned above, energy barriers are a fundamental object in the study of large deviations, where they give the exponential factor in the expected time for a stochastic perturbation to drive the system out of the basin of attraction of a local minimizer in the small noise limit; cf.\ \cite{FW}.

Given theorem \ref{t:exist}, it is natural to define the energy barrier around $\um$ as
\begin{align}\notag
\deone:=\inf_{\psi }\max_{t\in [0,1]} \ephi(\psi(t)).
\end{align}
over  paths $\psi \in C([0,1];X_\phi)$ such that
\begin{align*}
  \ephi(\psi(0))<\ephi(\um),\qquad \psi(1)=\um.
\end{align*}
Our lower bound (cf.\ proposition \ref{lemma.lower.bound}) bounds this quantity from below and allows for a mountain pass argument. Unfortunately our constructions do not take us all the way to $\um$, so that we do not obtain a matching upper bound. Suppose that we are satisfied with reaching the following neighborhood of $\um$:
\begin{align*}
  \mathcal{N}_\eps(\um):=\left\{u\in X_\phi\colon |u-\um|_{\tphil}+\ephi(u)-\ephi(\um)< \eps\right\}.
\end{align*}
Then we can define the modified energy barrier
\begin{align}
\de:=\inf_{\psi }\max_{t\in [0,1]} \ephi(\psi(t)),\label{barkk}
\end{align}
over  paths $\psi \in C([0,1];X_\phi)$ such that
\begin{align*}
  \ephi(\psi(0))<\ephi(\um),\qquad \psi(1)\in\mathcal{N}_\eps(\um).
\end{align*}
The constructions from proposition \ref{prop.upper.bound} together with theorem \ref{t:exist} verify that, for fixed, small $\eps$, and $\phi$ sufficiently small, there exists at least one such path. Our existence result for saddle points takes the following form.
\begin{proposition}\label{t:exus}
  For $\xi\in(\tilde{\xi_d},\xi_d)$, the critical scaling \eqref{crit}, and $\phi>0$ small enough, there exists a saddle point $\us$ of $\ephi$ such that
    \begin{align}
    \ephi(\us)=\deone\geq c_s+O(\phi^{1/3}).\label{ebfind}
  \end{align}
  In addition, there exists a (possibly different) saddle point $\ust$ such that
  \begin{align}
    \ephi(\ust)=\de=c_s+O(\phi^{1/3}).\label{ebfind2}
  \end{align}
\end{proposition}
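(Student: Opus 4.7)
The plan is a standard mountain pass argument applied to $\ephi$ on the affine space $X_\phi$. The two valleys are supplied by the uniform state $\baru$ (with $\ephi(\baru)=0$) and the droplet local minimizer $\um$ from theorem~\ref{t:exist} (with $\ephi(\um)=c_m+O(\phi^{1/3})$). Since $\xi\in(\xid,\xi_d)$ forces $0<c_m<c_s$, these valleys are strictly separated and the expected mountain pass level lies strictly above both endpoints for small $\phi$. To verify the mountain pass geometry I would note that $\baru$ is a strict local minimizer of $\ephi$ on $X_\phi$ in some $L^2$-neighborhood (convexity of $G$ near $-1$ together with Poincar\'e on the torus), while remark~\ref{rem:strictmin} makes $\um$ approximately strictly minimizing on the ring $|u-\um|_{\tphil}=\gamma$ for each fixed $\gamma\in(0,\gamma_0]$, uniformly in small $\phi$.

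For the lower bound $\deone\geq c_s+O(\phi^{1/3})$ I would invoke proposition~\ref{lemma.lower.bound}. The mechanism is the continuous volume functional $\nu$ from notation~\ref{notvol}: any continuous path $\psi:[0,1]\to X_\phi$ with $\ephi(\psi(0))<\ephi(\um)$ and $\psi(1)=\um$ (or $\psi(1)\in\neps$) starts with $\nu(\psi(0))$ small, and ends with $\nu(\psi(1))$ close to $\nu_m$ by the $L^2$-closeness of theorem~\ref{t:exist}. By continuity, $\nu\circ\psi$ must take on the intermediate value $\nu_s$, and at that crossing the $\Gamma$-type lower bound pins the energy from below by $f_\xi(\nu_s)=c_s$, up to an $O(\phi^{1/3})$ error coming from the gap between the rescaled energy $\ephi$ and its sharp-interface limit. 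Taking the infimum over admissible paths yields $\deone\geq\de\geq c_s+O(\phi^{1/3})$.

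For the matching upper bound on $\de$, I would appeal to proposition~\ref{prop.upper.bound}, whose construction exhibits a continuous path from $\baru$ into $\neps$ with maximum energy $c_s+O(\phi^{1/3})$, obtained by mollifying and interpolating the sharp-interface family $\Psi(\,\cdot\,;\nu)$ as $\nu$ sweeps from $0$ to roughly $\nu_m$, with the terminal segment passing into $\neps$ via theorem~\ref{t:exist}. Pairing this with the lower bound pins $\de=c_s+O(\phi^{1/3})$, while $\deone$ remains a one-sided bound because our constructions need not reach $\um$ exactly. To upgrade the min-max level to an actual critical point of $\ephi$ on $X_\phi$, I would apply the Ambrosetti-Rabinowitz mountain pass theorem. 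The Palais-Smale condition follows from coercivity of the integrand $e_\phi$ in \eqref{ephi}: a sequence with bounded $\ephi$ and bounded mean is bounded in $H^1\cap L^4(\tphil)$ via the $|\nabla u|^2$ and $u^4$ contributions, the compact embedding $H^1(\tphil)\hookrightarrow L^p(\tphil)$ (the torus is bounded at fixed $\phi$) produces a strongly convergent subsequence in lower-order topologies, and testing $E_\phi'(u_n)$ against $u_n-u_\infty$ upgrades the convergence to strong $H^1$, yielding a critical point at the mountain pass level. For the truncated problem producing $\ust$, the deformation lemma combined with the approximate strictness in remark~\ref{rem:strictmin} ensures the critical point lies in the interior of the path set (not on $\partial\neps$).

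The main obstacle is the quantitative barrier lower bound $\deone\geq c_s+O(\phi^{1/3})$ with the explicit rate, which is the content of proposition~\ref{lemma.lower.bound}: it requires a $\Gamma$-type lower bound at states whose volume crosses $\nu_s$, sharp enough in $\phi$ and robust enough in $u$ to apply uniformly along arbitrary continuous paths in $X_\phi$, rather than merely along recovery sequences. By contrast, the mountain pass machinery, the Palais-Smale verification, and the upper-bound construction are comparatively routine once the geometry and the sharp barrier estimate are in place.
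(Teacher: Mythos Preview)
Your proposal is correct and follows essentially the same approach as the paper: a standard mountain pass argument, with the barrier lower bound obtained by tracking the continuous volume functional $\nu$ along any admissible path and invoking the $\Gamma$-type lower bound of proposition~\ref{lemma.lower.bound} at the crossing of $\nu_s$, and with the matching upper bound on $\de$ obtained from the constructions of proposition~\ref{prop.upper.bound} together with theorem~\ref{t:exist}. Two minor remarks: (a) the paper does not redo the Palais--Smale verification but cites \cite[lemma~A.2]{GW}, so your sketch is unnecessary here; (b) for the barrier lower bound the paper is slightly more careful about the starting point, observing that $\ephi(\psi(0))<\ephi(\um)$ together with \eqref{lower.bound} forces $\nu(\psi(0))\leq\nu_-+O(\phi^{1/3})$ (where $\nu_-<\nu_s$ is the other root of $f_\xi=c_m$), rather than merely ``$\nu(\psi(0))$ small''---this is what guarantees the crossing of $\nu_s$ for \emph{every} admissible path, not just those starting at $\baru$.
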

Although $\de$---and hence also $\ust$---depend on $\eps$, our estimate on the right-hand side of \eqref{ebfind2} is independent of $\eps$ as $\phi\to 0$. For this reason we do not explicitly denote the $\eps$-dependence.


We would like to say more about the ``droplet-like'' shape of the saddle points and the connection to the saddle point of the $\Gamma$-limit. Indeed, it is natural to think of $\us$ as the so-called critical nucleus \cite{CH3}, which is close in volume and $L^2$ to $\Psi_s$. Unfortunately we just miss being able to establish these facts.
As a partial substitute, we establish closeness in volume and $L^2$ of appropriately defined \emph{constrained minimizers} of the energy.

We define the constrained minimizers in the following way.
For $\omega\in [0,\xi^{d+1}/2)$, we define the functions $\uo\in X_\phi$ such that
\begin{align*}
  \nu(\uo)=\omega,\qquad \ephi(\uo)=\ehat(\omega),
\end{align*}
where
\begin{align}
\ehat(\omega):=  \min\Big\{\ephi(u)\colon u\in X_\phi,\,\nu(u)=\omega\Big\}.\label{ehat}
\end{align}
Because $\nu(u)$ is a stand-in for the volume of the set $\{u\approx 1\}$, we refer to such points $\uo$ as \uline{volume-constrained minimizers} or simply constrained minimizers, when there is no risk of confusion. Existence of the volume-constrained minimizers follows from the direct method of the calculus of variations.

One would like to use these constrained minimizers to define a continuous path over the mountain pass that keeps the energy as small as possible along the way. Indeed, continuity of $\omega\mapsto \uo$ would allow one to deduce information about the ``volume'' of $\us$ and hence also the $L^2$ closeness to $\Psi_s$, using for instance the work of Ghoussoub and Preiss \cite{GP}, in which they extract additional information about the location (in phase space) and type of critical points based on a mountain pass argument involving separating sets. Unfortunately uniqueness of the constrained minimizers is an open question (cf. remark \ref{rem:nonuq}), and continuity of $\omega\mapsto \uo$ is not immediately clear.

Although we cannot yet deduce fine properties of $\us$ or $\ust$, we can deduce the corresponding properties of an appropriately defined constrained minimizer.
We define the ``weak'' energy barrier surrounding $\um$ as
\begin{align}\label{eb2}
\Ew:=\sup_{\omega\in [0,\nu_m]} \ehat(\omega).
\end{align}
From the lower and upper bounds in propositions \ref{lemma.lower.bound} and \ref{prop.upper.bound}, one can immediately deduce the following. (We omit the proof.)
\begin{lemma}
  The weak energy barrier satisfies
  \begin{align}
    \Ew=\de+O(\phi^{1/3})=c_s+O(\phi^{1/3}).\label{ebjan2}
  \end{align}
\end{lemma}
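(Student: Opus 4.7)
\medskip

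\noindent\textbf{Proof sketch.} The plan is to sandwich $\Ew$ between $c_s-O(\phi^{1/3})$ and $c_s+O(\phi^{1/3})$ using the two cited propositions and then compare with the already known asymptotics for $\de$. The first observation is a pointwise comparison: for every $\omega$ the upper bound construction of proposition \ref{prop.upper.bound} supplies a test function $v_\omega\in X_\phi$ with $\nu(v_\omega)=\omega$ and $\ephi(v_\omega)\leq f_\xi(\omega)+O(\phi^{1/3})$, while the lower bound of proposition \ref{lemma.lower.bound} gives $\ephi(u)\geq f_\xi(\nu(u))-O(\phi^{1/3})$ for every admissible $u$ with volume in $[0,\nu_m]$. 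Together these yield
\begin{equation*}
f_\xi(\omega)-O(\phi^{1/3})\;\leq\;\ehat(\omega)\;\leq\; f_\xi(\omega)+O(\phi^{1/3})\qquad\text{uniformly in }\omega\in[0,\nu_m].
\end{equation*}

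Taking the supremum over $\omega\in[0,\nu_m]$ and using that on this interval $f_\xi$ attains its maximum at $\omega=\nu_s\in(0,\nu_m)$ with value $c_s$ (cf.\ figure \ref{figurezz} and notation \ref{not.1}), we conclude $\Ew=c_s+O(\phi^{1/3})$. This handles the second equality in \eqref{ebjan2}. For the first equality, we combine this estimate with the identity $\de=c_s+O(\phi^{1/3})$ already recorded in \eqref{ebfind2} of proposition \ref{t:exus}, which immediately yields $\Ew=\de+O(\phi^{1/3})$.

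The only delicate point is that both propositions deliver the error $O(\phi^{1/3})$ \emph{uniformly} in $\omega\in[0,\nu_m]$ rather than pointwise with an $\omega$-dependent constant; this is built into the constructions in proposition \ref{prop.upper.bound} (which are done simultaneously for a family of volumes) and into the lower bound of proposition \ref{lemma.lower.bound} (whose constants depend on $\xi$ but not on $\omega$). Given this uniformity, no mountain pass argument is needed here: the lemma reduces to an interchange of $\sup_\omega$ and the matching estimates on $\ehat$, which is why we omit the detailed computation.
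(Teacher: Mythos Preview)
Your approach is correct and matches what the paper intends: the lemma follows by sandwiching $\ehat(\omega)$ between the lower bound of proposition~\ref{lemma.lower.bound} and the upper bound construction of proposition~\ref{prop.upper.bound}, taking the supremum, and then invoking $\de=c_s+O(\phi^{1/3})$ from \eqref{ebfind2}.

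One small caveat on your final paragraph: the pointwise two-sided estimate $\ehat(\omega)=f_\xi(\omega)+O(\phi^{1/3})$ does \emph{not} hold uniformly all the way down to $\omega=0$. Proposition~\ref{lemma.lower.bound} gives the refined bound \eqref{lower.bound} only for $\nu(u)\geq\omega$ with $\omega>0$ fixed, and proposition~\ref{prop.upper.bound} gives the sharp energy estimate \eqref{upper.bound} only on the second part of the path, i.e., for $\omega\in[\omega_1,\omega_2]$ with $\omega_1>0$ fixed. This does not affect your conclusion, since the supremum defining $\Ew$ is attained near $\nu_s$, which is bounded away from zero: for the lower bound on $\Ew$ it suffices to evaluate at $\omega=\nu_s$, and for the upper bound the initial segment $[0,\omega_1]$ of the path stays below $f_\xi(\omega_1)+o(1)<c_s$. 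But your claim that the constants in proposition~\ref{lemma.lower.bound} ``depend on $\xi$ but not on $\omega$'' overstates what the proposition provides.
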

For our ``approximate saddle point,'' we choose (any) $\omega_*\in [0,\nu_m]$ such that
\begin{align}
  \ehat(\omega_*)=\Ew+O(\phi^{1/3})\label{omgst}
\end{align}
and denote by $\uws$ a corresponding volume-constrained minimizer. In the following theorem, we establish the desired properties of $\uws$.
\begin{thm}\label{t:saddle}
Consider $\xi\in(\tilde{\xi_d},\xi_d]$ and the critical scaling \eqref{crit}. For $\phi$ sufficiently small, the volume-constrained minimizer with volume $\omega_*$ satisfying \eqref{omgst} is well-approximated by the limit saddle point $\Psi_s$ in the following sense. For every  $\gamma>0$ sufficiently small, there exists $\phi_0>0$ such that
\begin{align}
  |\uws-\Psi_s|_{\tphil}\leq \gamma\qquad \text{for all }0<\phi\leq \phi_0.\label{ump2}
\end{align}
The constrained minimizer $\uws$ and the limit saddle point $\Psi_s$ are close in volume and energy in the sense that
\begin{align}
|\ephi(\uws)-c_s|&\lesssim \phi^{1/3},\label{enps}\\
  | \nu(\uws)-\nu_s|&\leq C \phi^{1/6},\label{wmm2}
\end{align}
where $C$, in addition to depending as usual on $d$, depends on $\xi$  (and is large for $\xi$ near $\tilde{\xi}_d$).
\end{thm}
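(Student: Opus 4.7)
The plan is to establish the three conclusions \eqref{enps}, \eqref{wmm2}, and \eqref{ump2} in sequence. The energy estimate is immediate: by the defining property \eqref{omgst} of $\omega_*$ combined with \eqref{ebjan2},
\[
\ephi(\uws)=\ehat(\omega_*)=\Ew+O(\phi^{1/3})=c_s+O(\phi^{1/3}).
\]
For the volume estimate I would invoke the sharp-interface upper bound of proposition \ref{prop.upper.bound}, which should provide $\ehat(\omega)\leq f_\xi(\omega)+O(\phi^{1/3})$ uniformly in the relevant range of $\omega$. Evaluated at $\omega_*$ this yields $f_\xi(\omega_*)\geq c_s-O(\phi^{1/3})$. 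On $[0,\nu_m]$ the function $f_\xi$ attains its maximum $c_s$ uniquely at $\nu_s$: indeed $f_\xi$ is strictly increasing on $[0,\nu_s]$, strictly decreasing on $[\nu_s,\nu_m]$, and $f_\xi(\nu_m)=c_m<c_s$. For $\xi>\tilde{\xi_d}$ strictly, one also has $f_\xi''(\nu_s)<0$, since the degenerate case $f_\xi''(\nu_s)=0$ corresponds precisely to the saddle-node bifurcation at $\xi=\tilde{\xi_d}$. A Taylor expansion then yields $c_s-f_\xi(\omega)\geq c(\xi)(\omega-\nu_s)^2$ in a neighborhood of $\nu_s$, with $c(\xi)\to 0$ as $\xi\searrow\tilde{\xi_d}$, while $c_s-f_\xi$ is bounded below by a positive constant outside any such neighborhood on $[0,\nu_m]$. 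Hence for $\phi$ small enough $\omega_*$ must lie near $\nu_s$, and the quadratic lower bound delivers $|\omega_*-\nu_s|\leq C(\xi)\phi^{1/6}$. Since $\nu(\uws)=\omega_*$, this proves \eqref{wmm2}.

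For \eqref{ump2} I would argue by contradiction. Suppose there exist $\gamma>0$ and $\phi_n\to 0$ such that $|u_n-\Psi_s|_{\mathbb{T}_{\phi_n L_n}}>\gamma$ for all $n$, writing $u_n:=u_{\omega_*(\phi_n),\phi_n}$. The bound $\mathcal{E}_{\phi_n}^\xi(u_n)=c_s+o(1)$ together with the compactness theory underlying the $\Gamma$-convergence of \cite{GW} yields, after extension to $\R^d$ by $-1$ and along an appropriate sequence of translations, a subsequence converging in $-1+L^p(\R^d)$ to some $u_\infty$ with $u_\infty\in\{-1,+1\}$ a.e.\ and of finite perimeter; the $\Gamma$-liminf inequality gives $\ez(u_\infty)\leq c_s$. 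Continuity of $\nu$ along such a sequence, combined with \eqref{wmm2}, forces $|\{u_\infty=1\}|=\nu_s$, and the sharp isoperimetric inequality then produces
\[
\ez(u_\infty)\geq c_0\sigma_d^{1/d}d^{(d-1)/d}\nu_s^{(d-1)/d}-4\nu_s+\frac{4\nu_s^2}{\xi^{d+1}}=f_\xi(\nu_s)=c_s,
\]
with equality only when $\{u_\infty=1\}$ is a ball. Equality must therefore hold, forcing $u_\infty=\Psi_s(\cdot-x_0)$ for some $x_0\in\R^d$. Finally the $L^p_{\mathrm{loc}}(\R^d)$ convergence must be upgraded to $L^2$ convergence on the torus: on any ball $B_R$ containing $B_{\nu_s}(x_0)$, interpolation using the uniform $L^4$ bound from the energy yields $L^2$ convergence on $B_R$; on the complementary bulk, the convexity of $G$ near $-1$ gives the coercive estimate $\|u_n-(-1+\phi_n)\|_{L^2(\mathrm{bulk})}^2\lesssim \phi_n\,\mathcal{E}_{\phi_n}^\xi(u_n)=O(\phi_n)$, which combined with $\|(-1+\phi_n)-(-1)\|_{L^2(\mathrm{bulk})}^2\leq \phi_n^2|\mathrm{bulk}|=O(\phi_n)$ gives $\|u_n-\Psi_s(\cdot-x_0)\|_{L^2(\mathrm{bulk})}^2\to 0$. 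The resulting $|u_n-\Psi_s|_{\mathbb{T}_{\phi_n L_n}}\to 0$ contradicts the standing hypothesis.

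The principal obstacle is this last step, where the convergence delivered by $\Gamma$-compactness in the $-1+L^p(\R^d)$ topology must be transferred into the translation-invariant $L^2$ distance on the growing torus. Compactness modulo translation and the identification of the limiting profile as a ball are clean consequences of the framework of \cite{GW} and the sharp isoperimetric inequality; however, the bulk control---central to ruling out loss of $L^2$-mass at infinity when comparing against a compactly supported perturbation of $-1$---relies separately on the coercivity of $\ephi$ near the background state $-1$.
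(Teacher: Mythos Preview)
Your treatment of \eqref{enps} and \eqref{wmm2} is correct and matches the paper's argument: both use the upper bound of proposition~\ref{prop.upper.bound} to obtain $f_\xi(\omega_*)\geq c_s-O(\phi^{1/3})$ and then a Taylor expansion at the nondegenerate maximum $\nu_s$ of $f_\xi$.

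For \eqref{ump2} your route differs from the paper's and contains a genuine gap. The paper invokes lemma~\ref{lc2} (the finite-$\phi$ estimate), which provides a uniform lower bound $\ephi(u)\geq c_s+\delta$ for all competitors with $\nu(u)$ near $\nu_s$ and $|u-\Psi_s|_{\tphil}\geq\gamma$; comparison with the upper-bound construction then forces $|\uws-\Psi_s|_{\tphil}<\gamma$. Your direct compactness argument is in spirit the contradiction argument used to \emph{prove} lemma~\ref{lc2}, but your execution of the crucial step breaks down. The claimed bulk coercivity
\[
\|u_n-(-1+\phi_n)\|_{L^2(\mathrm{bulk})}^2\lesssim \phi_n\,\ephi(u_n)
\]
is false as stated: the energy density $e_\phi(u)$ in \eqref{ephi} is \emph{negative} (of order $-4$) near $u=+1$, so restricting the energy integral to a spatial region does not yield a one-sided bound unless you already know $u_n$ is close to $-1$ there. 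This is precisely what fails if part of the droplet $\{u_n\approx +1\}$ has drifted outside $B_R$. For the same reason your assertion that $\nu$ is continuous along the sequence is unjustified: $\nu(u_n)\to\nu_s$ does not force $|\{u_\infty=+1\}|=\nu_s$ under $L^p_{\mathrm{loc}}$ convergence on a growing domain.

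The missing ingredient is the ``no free lunch'' perimeter accounting carried out in the proof of lemma~\ref{lc2} (steps~3--4): if a volume $\beta/4>0$ of the set $\{u_n\approx +1\}$ escapes any compact set, the coarea formula and the isoperimetric inequality on the torus contribute an additional cost $\bar C_1(\beta/4)^{(d-1)/d}$ to $\liminf\ephi(u_n)$, beyond $\ez(u_\infty)$. Since $|\{u_\infty=+1\}|+\beta/4=\nu_s$ and $t\mapsto t^{(d-1)/d}$ is strictly subadditive, this extra cost makes $\liminf\ephi(u_n)>c_s$ whenever $\beta>0$, contradicting $\ephi(u_n)\to c_s$. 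Without this perimeter argument (or, alternatively, the quantitative route via lemma~\ref{frank1} and lemma~\ref{l:l2fr} used for theorem~\ref{t:spheres}), the compactness approach cannot exclude mass escape, and your contradiction does not close.
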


\begin{rmrk}[Approximate mountain pass property]\label{rem:appxmtn}
We do not establish that $\uws$ is a saddle point, much less a saddle point of mountain pass type. However $\uws$, is an approximate mountain pass point in the following sense.
For any $\gamma>0$, there exist $\delta>0$, $\phi_0>0$ such that for $\phi\leq\phi_0$, there holds
\begin{enumerate}
\item[(i)] for any $u\in X_\phi$,
\[
\nu(u)=\nu(\uws),\,\, |u-\uws|_{\tphil}>\gamma \qquad\Rightarrow\qquad \ephi(u)>\ephi (\uws)+\delta,
\]
\item[(ii)] there is a point in the $\gamma$-neighborhood of $\uws$ with smaller volume and lower energy and a point in the $\gamma$-neighborhood of $\uws$ with larger volume and lower energy.
\end{enumerate}
The first point follows from theorem \ref{t:saddle} together with lemmas \ref{la2} and \ref{lc2} below.
For the second point, it is convenient to use closeness of $\uws$ to $\Psi_s$, so that it suffices to find a point $\hat{u}_\phi\in X_\phi$ such that
\begin{align*}
  |\hat{u}_\phi-\Psi_s|<\gamma/2\qquad\text{and}\qquad \ephi(\hat{u}_\phi)<\ephi(\uws).
\end{align*}
The constructions from proposition \ref{prop.upper.bound} (for volume slightly less or slightly greater than $\nu_s$) do the job.
\end{rmrk}

\begin{remark}[Quantified closeness to the sharp-interface saddle]
As for the minimizer, we quantify \eqref{ump2} in \eqref{visa} below.
\end{remark}

Although we do not manage to show theorem \ref{t:saddle} with $\uws$ replaced by $\us$ or $\ust$, we do obtain information about any approximately optimal path for $\de$. We make this connection precise in remark \ref{rem:all} below after first introducing theorem \ref{t:spheres}.

\subsection{Refinement via isoperimetric inequalities and Steiner symmetrization}
Because the perimeter functional plays the only geometric role in the $\Gamma$-limit, the (classical) isoperimetric inequality suggests that approximately radial functions should be optimal in terms of energy.
This idea is a key ingredient in the lower bound  of the $\Gamma$-convergence argument. Now we would like to measure the defect. As mentioned above, although the critical points would be exactly radial on $\R^d$, they are frustrated in our setting because they are confined to the torus.
We are interested in estimating their deviation from sphericity for $\phi$ small but nonzero, which corresponds to $\tphil$ large but bounded.  We achieve this goal by obtaining quantitative bounds on the $L^2$ distance and so-called Fraenkel asymmetry as a function of $\phi$.
The Fraenkel asymmetry and sharp isoperimetric inequality of Fusco, Maggi, and Pratelli \cite{FMP} also play an important role in several of our proofs. We recall the definitions and theorems.

It will be useful to define the ``isoperimetric function'' ${\rm P}_E$ that associates to a set the perimeter of the ball with the same volume.
\begin{definition}
The Euclidean isoperimetric function in $\mathbb{R}^d$ is defined by
\begin{equation}\label{Eucl.Isop.Fn}
{\rm P}_E(A):=\sigmad ^{1/d}d^{(d-1)/d}|A|^{(d-1)/d},
\end{equation}
for any Borel set $A\subset \mathbb{R}^d$.
\end{definition}
Using this notation, we can express the classical isoperimetric inequality on $\R^d$ in the following way.
\begin{thm}\label{Regular.Isop}
For any Borel set $A\subset \R^d$ there holds
\[
\per(A)\geq {\rm P}_E(A),
\]
where $\per(A)$ is the perimeter of $A$ in $\R^d$.
\end{thm}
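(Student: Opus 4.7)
The statement is the classical Euclidean isoperimetric inequality, so rather than developing anything new my plan would be to fall back on a well-known route; the cleanest is via the Brunn--Minkowski inequality, which has the advantage of also giving the sharp constant automatically.

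First I would reduce to the nontrivial case. If $|A|=\infty$ or if the distributional perimeter $\per(A)=\infty$ the inequality is either vacuous or trivial, so I may assume $A$ is a set of finite perimeter with $|A|<\infty$. By the standard approximation theorem for sets of finite perimeter (for instance via mollification of the characteristic function together with the coarea formula, or via De Giorgi's structure theorem), there is a sequence $A_k$ of smooth bounded open sets with $|A_k|\to |A|$ and $\per(A_k)\to \per(A)$. Therefore it suffices to prove the inequality for smooth bounded $A$, in which case the distributional perimeter agrees with the Minkowski content
\[
\per(A)=\lim_{\eps\to 0^{+}}\frac{|A+\eps B_1|-|A|}{\eps},
\]
where $B_1$ denotes the open unit ball of $\R^d$ centered at the origin.

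Next I would apply Brunn--Minkowski to $A$ and $\eps B_1$:
\[
|A+\eps B_1|^{1/d}\geq |A|^{1/d}+\eps|B_1|^{1/d}.
\]
Raising both sides to the $d$-th power and expanding yields
\[
|A+\eps B_1|\geq |A|+d\,\eps\,|B_1|^{1/d}|A|^{(d-1)/d}+O(\eps^{2}),
\]
so dividing by $\eps$ and passing to the limit gives
\[
\per(A)\geq d\,|B_1|^{1/d}|A|^{(d-1)/d}.
\]
Since the $(d-1)$-dimensional surface area of the unit sphere is $\sigma_d$ and the volume of the unit ball is $|B_1|=\sigma_d/d$, we obtain
\[
\per(A)\geq d\cdot (\sigma_d/d)^{1/d}|A|^{(d-1)/d}=\sigma_d^{1/d}d^{(d-1)/d}|A|^{(d-1)/d}={\rm P}_E(A),
\]
which matches the definition in \eqref{Eucl.Isop.Fn}.

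The main obstacle is not the chain of inequalities above, which is a single application of Brunn--Minkowski, but rather the technical reduction step: namely, checking that approximation by smooth sets is available for an arbitrary Borel set of finite perimeter and that the Minkowski content formula for $\per$ is valid for smooth bounded sets. Both facts are standard results in geometric measure theory (see, e.g., Maggi's or Ambrosio--Fusco--Pallara's monographs), so in the paper I would simply cite them. An alternative, equally classical route would be to use Schwarz symmetrization and the fact that spherical rearrangement does not increase perimeter and preserves volume, reducing the problem to equality on balls; this avoids Brunn--Minkowski but requires essentially the same measure-theoretic groundwork.
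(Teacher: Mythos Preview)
Your argument is correct and follows one of the standard classical routes to the isoperimetric inequality. Note, however, that the paper does not actually prove this theorem: it is stated as the classical isoperimetric inequality on $\R^d$ and used as a black box throughout (the paper's own contributions concern the torus versions, e.g.\ theorem~\ref{Regular.Isop.Torus} and corollary~\ref{SharpIsoper}). So there is nothing to compare against; your Brunn--Minkowski proof is a perfectly acceptable way to supply the details if one wished to, and your remarks about the approximation step and the Minkowski content identity are the right technical caveats to flag.
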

While theorem \ref{Regular.Isop} does not apply to the torus, its conclusion still holds true for sets of small enough measure. This is the content of the following isoperimetric inequality, which we state as in \cite{CCELM}, and which is a special case of \cite[theorem 4.4]{MJ}.
\begin{thm}[\cite{CCELM}, theorem 6.1]\label{Regular.Isop.Torus}
Let $d\geq 2$ and consider the unit $d$-dimensional flat torus $\mathbb{T}_1:=[-1/2,1/2]^d$. There exists an $\epsilon=\epsilon(d)>0$ such that for any Borel set $A\subset \mathbb{T}_1$ with $|A|\leq \epsilon$, the perimeter $\per_{\mathbb{T}_1}(A)$ of $A$ in $\mathbb{T}_1$ satisfies
\[
\per_{\mathbb{T}_1}(A)\geq {\rm P}_E(A).
\]
\end{thm}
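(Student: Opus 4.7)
My plan is to recast the statement in terms of the isoperimetric profile of the torus, $\mathcal{I}(v):=\inf\{\per_{\mathbb{T}_1}(A):|A|=v\}$, and show that $\mathcal{I}(v)\geq {\rm P}_E(v)$ for $v$ below a threshold depending only on $d$. The upper bound $\mathcal{I}(v)\leq{\rm P}_E(v)$ is easy: whenever $v$ is smaller than the volume of the ball inscribed in $\mathbb{T}_1$, a Euclidean ball of volume $v$ fits into an open fundamental cube, and its toroidal perimeter coincides with its Euclidean perimeter. So the substance is the matching lower bound for a minimizer.

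First I would establish existence of a minimizer $A^*_v$ of $\mathcal{I}(v)$ via the direct method: $L^1$-precompactness of BV-functions on the compact torus with uniformly bounded total variation, together with lower semicontinuity of $\per_{\mathbb{T}_1}$. I would then invoke Almgren--De Giorgi regularity to obtain that $\partial^*A^*_v$ is a smooth constant-mean-curvature hypersurface away from a singular set of Hausdorff codimension at least~$8$.

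Next---and this is the crux---I would show that for $v$ sufficiently small, $A^*_v$ is, up to translation, a Euclidean ball. Rescale by $r_v:=v^{-1/d}$ so that $\tilde A_v:=r_v A^*_v$ has unit volume in the dilated torus $r_v\mathbb{T}_1$, which exhausts $\R^d$ as $v\to 0$. Combining a concentration-compactness argument with the uniform regularity of $\partial^*\tilde A_v$, one extracts---after recentering---a subsequential $L^1_{\rm loc}$-limit that is a perimeter-minimizing set of unit volume in $\R^d$, hence a Euclidean unit ball. Undoing the rescaling shows $A^*_v$ is itself a Euclidean ball for all sufficiently small $v$, so $\mathcal{I}(v)={\rm P}_E(v)$ whenever $v\leq\epsilon(d)$. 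An independent approximation by smooth sets reduces the general Borel case to this setting.

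The hard part is the concentration-compactness step: one must rule out mass escape along toroidal directions (handled by translating to recenter) and splitting of $\tilde A_v$ into separate bubbles of volumes $v_1,\dots,v_k>0$. The splitting scenario is excluded by the strict inequality $v_1^{(d-1)/d}+\cdots+v_k^{(d-1)/d}>(v_1+\cdots+v_k)^{(d-1)/d}$ for $k\geq 2$: combined with the Euclidean isoperimetric inequality applied bubble-by-bubble, any genuine splitting of $\tilde A_v$ would force the total perimeter to strictly exceed ${\rm P}_E(v)$, contradicting near-optimality. This rigidity is what ultimately pins the minimizer down to a ball and delivers the bound.
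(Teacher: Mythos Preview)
The paper does not supply its own proof of this theorem; it cites it from \cite{CCELM} and notes it is a special case of \cite[theorem 4.4]{MJ}. Your approach---existence of isoperimetric minimizers on the compact torus, Almgren regularity, and a rescaling/concentration-compactness argument identifying small-volume minimizers as Euclidean balls---is essentially the Morgan--Johnson strategy the paper defers to, and it is correct. The one step worth tightening is the passage from subsequential convergence of the rescaled minimizers to the conclusion that $A^*_v$ is a ball for \emph{all} small $v$: on the flat torus this closes cleanly by noting that once the rescaled set is $C^1$-close to a unit ball (via the uniform regularity estimates), it lies inside a ball of fixed radius, and hence after undoing the rescaling $A^*_v$ sits in a region of $\mathbb{T}_1$ isometric to a Euclidean ball, where the Euclidean isoperimetric inequality applies directly and forces $A^*_v$ to be round.

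By contrast, the elementary shift-and-unwrap device the paper deploys in the appendix for Corollary~\ref{SharpIsoper} (slicing to find, in each coordinate direction, a hyperplane meeting $A$ in $\mathcal{H}^{d-1}$-measure at most $2|A|$, translating, and regarding $A$ as a subset of $\R^d$) would on its own only yield $\per_{\mathbb{T}_1}(A)\geq{\rm P}_E(A)-4d|A|$, which falls short of the exact inequality by a lower-order term. So your use of the heavier regularity and compactness machinery is not gratuitous: the sharp statement genuinely requires pinning down the minimizer as a ball.
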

We will use the isoperimetric inequality in the setting of the torus $\tphil$, in which case theorem \ref{Regular.Isop.Torus} takes the form
\begin{equation}\label{Regular.Isop.Torus2}
\per_{\tphil}(A)\geq {\rm P}_E(A),
\end{equation}
for any Borel set $A\subset \tphil$ with $|A|\leq \epsilon \,|\tphil|$. The positive constant $\epsilon$ is the same as in theorem \ref{Regular.Isop.Torus}.

The next order correction to the perimeter is probed via so-called quantitative isoperimetric inequalities, which quantify how much the perimeter of a set is increased from that of a ball when the set deviates from spherical. The deviation from sphericity is measured in terms of the Fraenkel asymmetry.
\begin{definition}[Fraenkel asymmetry]\label{def:frk}
The \uline{Fraenkel asymmetry in $\R^d$} of a set $E\subset\R^d$ is defined as
  \begin{align*}
    \lambda(A):=\min_{x\in\R^d} \frac{|A\triangle B(x)|}{|A|},
  \end{align*}
  where $B(x)\subset \R^d$ is a ball with center $x$ and volume $|A|$ and $A\triangle B$ denotes the symmetric difference of $A$ and $B$.

   Similarly, the \uline{Fraenkel asymmetry in the torus} of a set $A\subset\tphil$ whose measure does not exceed that of a ball of radius $\phi L/2$ is defined as
  \begin{align*}
    \lambda(A):=\min_{x\in\tphil} \frac{|A\triangle B(x)|}{|A|},
  \end{align*}
  where $B(x)\subset \tphil$ is a ball with center $x$ and volume $|A|$.
\end{definition}

We will use both the quantitative isoperimetric inequality on the full space and on the torus. We begin by recalling the sharp result of Fusco, Maggi, and Pratelli \cite{FMP}.
\begin{thm}[\cite{FMP}, theorem 1.1]
There exists a constant $C=C(d)$ such that for any Borel set $A\subset\R^d$ with $0<|A|<\infty$, the perimeter $\per(A)$ of $A$ in $\R^d$ satisfies
\begin{equation}\label{Regular.Sharp.Isop}
\per(A)\geq {\rm P}_E(A)+C(d)\lambda(A)^2 {\rm P}_E(A),
\end{equation}
where $\lambda(A)$ is the Fraenkel asymmetry of $A$ in $\R^d$ and ${\rm P}_E(A)$ is given by \eqref{Eucl.Isop.Fn}.
\end{thm}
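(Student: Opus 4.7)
The plan is to reduce, via a quantitative Steiner symmetrization argument, to so-called $n$-symmetric sets (sets invariant under reflection through $d$ orthogonal hyperplanes through a common point), for which \eqref{Regular.Sharp.Isop} can be established essentially by bare hands. By the scaling $A\mapsto tA$, both sides of \eqref{Regular.Sharp.Isop} are homogeneous of the same degree, so I first normalize $|A|$ to equal the volume of the unit ball, and a standard approximation reduces matters to bounded sets of finite perimeter with a chosen center of mass.

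The key technical lemma quantifies one step of Steiner symmetrization. If $A^H$ denotes the Steiner symmetral of $A$ with respect to a hyperplane $H$, then volume is preserved and perimeter does not increase; however $\lambda$ can strictly decrease. For a suitably chosen $H$ one shows that, whenever $\lambda(A^H)=t\,\lambda(A)$ with $t\in(0,1)$,
\[
\per(A)-{\rm P}_E(A)\;\geq\;c(d)\,(1-t)^2\,\lambda(A)^2\,{\rm P}_E(A).
\]
This is proved by a one-dimensional slicing argument along $H^{\perp}$: each slice of $A$ is compared pointwise with the corresponding symmetric interval, and the excess $\HM^{d-1}$-measure of $\partial A$ produced by the ``asymmetric'' portion of each slice is bounded from below using the fact that each slice must reorganize to match the symmetral.

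Iterating this lemma along $d$ mutually orthogonal directions reduces the problem to an $n$-symmetric competitor $A^*$ centered at the common symmetry point, for which the natural comparison ball is the centered ball of volume $|A^*|$. For $n$-symmetric sets one obtains a direct bound
\[
\lambda(A^*)^2\,{\rm P}_E(A^*)\;\leq\; C(d)\,\bigl(\per(A^*)-{\rm P}_E(A^*)\bigr)
\]
by writing $\partial A^*$ as a radial graph over the sphere and carrying out a Fuglede-style second-order expansion: the perimeter deficit controls the $H^{1/2}$-norm of the graph, which in turn controls its $L^1$-norm and hence the symmetric-difference measure entering $\lambda$. Unwinding the inequalities from the Steiner iteration recovers \eqref{Regular.Sharp.Isop} for the original $A$.

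The hard part is arranging the quantitative symmetrization so that the exponent $2$ on $\lambda(A)$ is preserved through all $d$ iterations; a naive bound degrades to $\lambda^{2^{d+1}}$ or worse. One must track the gain from each Steiner step \emph{additively}, choose the symmetrization hyperplanes adaptively so that the $t$ produced at each step is bounded away from $1$, and handle the borderline case $t\to 1$ (where the set is already nearly symmetric in that direction and the $n$-symmetric estimate must take over) as a separate branch of the argument. The sharp exponent $2$ is optimal, as mildly prolate ellipsoids show, consistent with the quadratic prediction of Fuglede's second-variation analysis for nearly spherical sets.
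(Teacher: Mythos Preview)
The paper does not prove this theorem; it is simply quoted from \cite{FMP} and used as a black box. So there is no ``paper's own proof'' to compare against.

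Your sketch does follow the broad architecture of the actual Fusco--Maggi--Pratelli argument: reduction via quantitative Steiner symmetrization to $n$-symmetric sets, with careful bookkeeping so the exponent $2$ survives the iteration. That part is accurate in spirit, and your comment about the naive bound degrading to a higher power of $\lambda$ is exactly the obstacle their paper overcomes.

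However, your treatment of the $n$-symmetric case is not what FMP do, and as written it has a gap. You propose to write $\partial A^*$ as a radial graph over the sphere and run a Fuglede-type second-variation expansion. But a general $n$-symmetric set of finite perimeter need not have boundary that is a radial graph at all: it can have multiple components, holes, or wild topology, and the Fuglede argument only applies to nearly spherical sets whose boundary is already a small $C^1$ (or at least Lipschitz) perturbation of the sphere. FMP instead reduce the $n$-symmetric case further to axially symmetric sets (Schwarz symmetrization about an axis) and then carry out an essentially one-dimensional computation. The Fuglede route you describe is the endgame of a different proof, the Cicalese--Leonardi selection-principle approach, where a compactness/regularity argument first guarantees that minimizing sequences are nearly spherical before Fuglede is invoked. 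So either switch the $n$-symmetric step to the FMP axial reduction, or insert the selection-principle machinery that justifies the radial-graph hypothesis; as it stands that step does not go through.
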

Following the method of \cite[theorem 6.2]{CCELM} to verify the classical isoperimetric inequality on the torus, we verify the following quantitative isoperimetric inequality on the torus for sets of small measure.
\begin{corollary}\label{SharpIsoper}
Consider the unit $d$-dimensional flat torus $\mathbb{T}_1:=[-1/2,1/2]^d$. There exist constants $C=C(d)>0$ and $\epsilon=\epsilon(d)>0$ such that, for any Borel set $A\subset \mathbb{T}_1$ with $|A|<\epsilon$, the perimeter $\per_{\mathbb{T}_1}(A)$ of $A$ in $\mathbb{T}_1$ satisfies the inequality
\begin{align*}
\per_{\mathbb{T}_1}(A)\geq {\rm P}_E(A)+C(d)\lambda(A)^2 {\rm P}_E(A)-4d|A|.
\end{align*}
Here $\lambda(A)$ is the Fraenkel asymmetry of $A$ in the torus and ${\rm P}_E(A)$ is given by \eqref{Eucl.Isop.Fn}.
\end{corollary}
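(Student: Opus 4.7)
The plan is to follow the strategy of \cite[theorem 6.2]{CCELM}, which establishes the non-quantitative version $\per_{\mathbb{T}_1}(A)\geq {\rm P}_E(A)$ for small sets on the torus, but to invoke the sharp Fusco--Maggi--Pratelli inequality \eqref{Regular.Sharp.Isop} at the final step. The idea is to unfold $A$ to a set $\tilde{A}\subset \R^d$ sitting inside a fundamental domain $[-1/2,1/2]^d$, compare the Euclidean and toroidal perimeters up to a controlled boundary correction, and then transfer the quantitative information back to the torus.

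\emph{Unfolding.} For each coordinate direction $i\in\{1,\dots,d\}$, Fubini's theorem gives $\int_{-1/2}^{1/2}\mathcal{H}^{d-1}(A\cap\{x_i=c\})\,dc=|A|$, so by Markov's inequality the set of $c\in[-1/2,1/2]$ for which this slice has $(d-1)$-measure at most $2|A|$ has Lebesgue measure at least $1/2$. Since the $i$-th slicing depends only on the parameter $c_i$, the choices may be made independently, producing $(c_1,\dots,c_d)$ with $\mathcal{H}^{d-1}(A\cap\{x_i=c_i\})\leq 2|A|$ for every $i$. After a suitable torus translation of $A$ (which leaves $\per_{\mathbb{T}_1}(A)$, $|A|$, and $\lambda(A)$ unchanged) we may assume $c_i=-1/2$ for each $i$, and then identify $A$ with its unique representative $\tilde{A}\subset[-1/2,1/2]^d\subset \R^d$.

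\emph{Perimeter comparison and application of FMP.} Because opposite faces of the cube are identified in the torus, the Euclidean reduced boundary of $\tilde{A}$ in $\R^d$ splits into the toroidal reduced boundary of $A$ together with, for each direction $i$, two copies of the slice $A\cap\{x_i=-1/2\}$ (one for each of the two faces $x_i=\pm 1/2$). Hence
\begin{equation*}
\per_{\R^d}(\tilde{A})=\per_{\mathbb{T}_1}(A)+2\sum_{i=1}^d\mathcal{H}^{d-1}\bigl(A\cap\{x_i=-1/2\}\bigr)\leq\per_{\mathbb{T}_1}(A)+4d|A|.
\end{equation*}
Applying \eqref{Regular.Sharp.Isop} to $\tilde{A}\subset\R^d$, and using ${\rm P}_E(\tilde{A})={\rm P}_E(A)$ together with $\lambda_{\R^d}(\tilde{A})\geq\lambda(A)$ (valid provided $\epsilon$ is small enough that every ball of volume $|A|$ in $\mathbb{T}_1$ is the isometric image of a Euclidean ball), we conclude
\begin{equation*}
\per_{\mathbb{T}_1}(A)+4d|A|\geq\per_{\R^d}(\tilde{A})\geq{\rm P}_E(A)+C(d)\lambda(A)^2{\rm P}_E(A),
\end{equation*}
which is the desired inequality.

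The main technical point is the careful measure-theoretic justification of the boundary decomposition in the perimeter comparison: it must be handled at the level of sets of finite perimeter, interpreting the slice $A\cap\{x_i=-1/2\}$ through the trace of the characteristic function $\chi_{\tilde{A}}$ on the hyperplane and verifying that the lower-dimensional contributions coming from edges and corners of the cube are negligible. Once this bookkeeping is in place, the comparison of Fraenkel asymmetries between the torus and $\R^d$ is straightforward under a smallness assumption on $\epsilon$.
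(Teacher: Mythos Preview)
Your proof is correct and follows essentially the same strategy as the paper's: unfold $A$ to a fundamental domain after cutting along hyperplanes of small slice measure, compare Euclidean and toroidal perimeters with the $4d|A|$ correction, and apply \eqref{Regular.Sharp.Isop}. The only notable variation is in the treatment of the Fraenkel asymmetry: the paper first fixes an optimal torus ball $B$, centers it at the origin, and restricts the cut parameters to $[-1/2,-1/4]\cup[1/4,1/2]$ so that the shifted ball $\tilde B$ stays inside the open cube (yielding $\lambda_{\R^d}(\tilde A)=\lambda(A)$), whereas you bypass this by invoking directly the inequality $\lambda_{\R^d}(\tilde A)\geq\lambda(A)$, which is all that is needed.
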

The proof of corollary \ref{SharpIsoper} is included in the appendix. We will use the sharp quantitative isoperimetric inequality in the setting of the torus $\tphil$,
in which case corollary \ref{SharpIsoper} takes the form
\begin{equation}\label{SharpIsoper2}
\per_{\tphil}(A)\geq {\rm P}_E(A)+C(d)\lambda(A)^2 {\rm P}_E(A)-\frac{4d|A|}{\phi L},
\end{equation}
for any Borel set $|A|\subset \tphil$ with $|A|<\epsilon\, |\tphil|$. Here $\per_{\tphil}(A)$ denotes the perimeter of $A$ in $\tphil$, and $C(d)$ and $\epsilon=\epsilon (d)$ are the same as in corollary \ref{SharpIsoper}.

In addition to using the quantitative isoperimetric inequality in our existence proofs, we use the inequality in order to prove the following theorem, which quantifies the degree to which the critical points are ``droplet-like.''
\begin{theorem}\label{t:spheres} Consider $\xi\in(\xid,\xi_d]$ and the critical scaling \eqref{crit}. The minimizer $\um$ and any volume-constrained minimizer $\uws$ for $\omega_*$ satisfying \eqref{omgst} are approximately spherical in the sense that, for every $s\in[-1+2\phi^{1/3},1-2\phi^{1/3}]$,  both $\um$ and $\uws$ satisfy
\begin{align}
\lambda(\{u>s\})\lesssim \phi^{\alpha}\qquad\text{with}\quad \alpha=\min\{1/6,1/(2d)\}\label{visam}
\end{align}
and consequently
\begin{align}
  |\um-\Psi_m|_{\tphil}^2+|\uws-\Psi_s|_{\tphil}^2
  \begin{cases}\label{visa}
  \leq C(\xi,d) \phi^{1/6}&\text{for}\quad d=2,3\\
  \lesssim \phi^{1/(2d)}&\text{for}\quad d\geq 4.
  \end{cases}
\end{align}
In fact, for any $\omega \in (0,\xi^{d+1}/2)$ and $\phi$ sufficiently small, any associated volume-constrained minimizer $\uo$ satisfies
\begin{align}
\lambda(\{\uo>s\})+ |\uo-\Psi(\cdot,\omega)|_{\tphil}^2\lesssim \frac{\phi^{\alpha}}{\omega}\qquad\text{with}\quad \alpha=\min\{1/6,1/(2d)\}.\label{uwtoo}
\end{align}
\end{theorem}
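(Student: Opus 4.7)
The proof of Theorem~\ref{t:spheres} couples the sharp-interface lower bound (cf.\ proposition~\ref{lemma.lower.bound}) with the quantitative isoperimetric inequality on the torus, \eqref{SharpIsoper2}, to control the Fraenkel asymmetry of level sets, and then integrates over levels to deduce the $L^2$ estimate.

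\textbf{Step 1 (level-wise lower bound).} For each fixed $s\in[-1+2\phi^{1/3},1-2\phi^{1/3}]$, the Modica--Mortola inequality $\tfrac{\phi}{2}|\nabla u|^2+\tfrac{1}{\phi}G(u)\geq \sqrt{2G(u)}\,|\nabla u|$, together with the coarea formula and the bookkeeping of the bulk terms from the lower bound summarized in subsection~\ref{ss:lowup}, should yield
\[
\ephi(u)\;\geq\; c_0\,{\rm Per}_{\tphil}(\{u>s\})-4V(s)+\frac{4V(s)^2}{\xi^{d+1}}-O(\phi^{1/3}),
\]
where $V(s):=|\{u>s\}|$. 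Plugging in \eqref{SharpIsoper2} and using that $V(s)/(\phi L)\lesssim \phi^{1/d}$ under the critical scaling \eqref{crit} whenever $V(s)\lesssim 1$, one obtains
\[
\ephi(u)\;\geq\; f_\xi(V(s))\,+\,c_0\,C(d)\,\lambda(\{u>s\})^2\,{\rm P}_E(V(s))\,-\,O(\phi^{1/3})\,-\,O(\phi^{1/d}).
\]

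\textbf{Step 2 (Fraenkel bound).} For $u=\um$, theorem~\ref{t:exist} gives $\ephi(\um)\leq c_m+O(\phi^{1/3})$ and $\nu(\um)=\nu_m+O(\phi^{1/6})$. Because $\um$ transitions between $\pm 1$ across an interface of thickness $O(\phi)$, one has $V(s)=\nu_m+O(\phi^{1/6})$ on the admissible range, and since $\nu_m$ is a strict local minimizer of $f_\xi$, Taylor expansion yields $f_\xi(V(s))=c_m+O(\phi^{1/3})$. Substituting into the Step~1 inequality and using $V(s)\gtrsim 1$ so that ${\rm P}_E(V(s))\gtrsim 1$, one deduces
\[
\lambda(\{\um>s\})^2\;\lesssim\;\phi^{\min\{1/3,1/d\}}\;=\;\phi^{2\alpha},
\]
which is \eqref{visam}. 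The argument for $\uws$ is identical, using \eqref{enps}, \eqref{wmm2}, and the constraint $\nu(\uws)=\omega_*\approx\nu_s$. For the more general statement \eqref{uwtoo}, I would compare Step~1 with the upper bound $\ehat(\omega)\leq f_\xi(\omega)+O(\phi^{1/3})$ of proposition~\ref{prop.upper.bound}; a brief bootstrap exploiting $\nu(\uo)=\omega$ and the thin interface pins down $V(s)=\omega+O(\phi^{1/3})$ and then yields the Fraenkel estimate, with the weaker dependence $\lesssim \phi^\alpha/\omega$ resulting after simplification.

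\textbf{Step 3 ($L^2$ bound).} To deduce \eqref{visa}, fix a reference level and let $x_0\in\tphil$ be the center of a Fraenkel-optimal ball for $\{u>0\}$; a short triangle-inequality argument shows that, thanks to the $O(\phi)$-thin interface, the Fraenkel-optimal centers of $\{u>s\}$ for all $s\in[-1+2\phi^{1/3},1-2\phi^{1/3}]$ lie within $O(\phi^\alpha)$ of $x_0$. Writing $B:=B_{\nu_m}(x_0)$ and using the identity
\[
\int_\tphil(u-\Psi_m(\cdot-x_0))^2\,dx\;=\;\int_B(1-u)^2\,dx+\int_{B^c}(1+u)^2\,dx,
\]
expanding each term via the layer-cake identity $(1\mp u)^2=2\int_{-1}^{1}(1\mp s)\,{\bf 1}[\,\pm u\leq \pm s\,]\,ds$, and bounding $|B\cap\{u\leq s\}|$ and $|B^c\cap\{u\geq s\}|$ by $\lambda(\{u>s\})V(s)+|V(s)-\nu_m|$ on the bulk range (while noting that the weights $(1\mp s)$ are $O(\phi^{1/3})$ on the boundary strips), one obtains $|\um-\Psi_m|_\tphil^2\lesssim \phi^\alpha$, and likewise for $|\uws-\Psi_s|_\tphil^2$.

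\textbf{Main obstacle.} The core difficulty lies in Step~1: the Modica--Mortola inequality naturally yields only the \emph{integrated} quantity $\int\sqrt{2G(s)}\,{\rm Per}(\{u>s\})\,ds$, not a pointwise-in-$s$ perimeter. Extracting the perimeter at a prescribed level $s$ requires exploiting the nearly nested, nearly equi-perimetric structure of the interior level sets of a (nearly) optimal $u$, along the lines of the $\Gamma$-convergence lower bound of \cite{GW}. A secondary subtlety in Step~3 is the simultaneous alignment of Fraenkel-optimal centers across levels, and in \eqref{uwtoo} the bootstrap that pins down $V(s)$ near $\omega$ without the benefit of a second-order well around $\omega$ when $f_\xi'(\omega)\neq 0$.
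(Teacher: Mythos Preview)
Your Step~1 is where the argument breaks, and you rightly flag it as the main obstacle. The pointwise-in-$s$ inequality you write is not what Modica--Mortola plus coarea delivers: the constant $c_0=\int_{-1}^{1}\sqrt{2G}$ only emerges after integrating over \emph{all} levels, whereas at a single level $s$ the coarea formula contributes merely an infinitesimal $\sqrt{2G(s)}\,{\rm Per}_{\tphil}(\{u>s\})\,ds$. Your suggested remedy---exploiting a ``nearly equi-perimetric structure'' of the level sets---would itself presuppose the near-sphericity you are trying to establish, so the reasoning is circular.

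The paper bypasses this difficulty by never attempting a level-wise perimeter bound. Proposition~\ref{lemma.lower.bound} gives the \emph{integrated} isoperimetric deficit
\[
I(u)=\int_{-1+2\kappa}^{1-2\kappa}\sqrt{2\tilde G(s)}\big({\rm Per}_{\tphil}(\{u>s\})-{\rm P}_E(\{u>s\})\big)\,ds\lesssim\phi^{1/3}
\]
(combining the lower bound with the energy hypothesis $\ephi(u)\leq f_\xi(\nu(u))+O(\phi^{1/3})$), and one then applies \eqref{SharpIsoper2} \emph{inside} the integral to obtain
\[
\int_{-1+2\kappa}^{1-2\kappa}\sqrt{2\tilde G(s)}\,{\rm P}_E(\{u>s\})\,\lambda(\{u>s\})^2\,ds\lesssim\phi^{1/3}+\phi^{1/d}.
\]
The decisive device is lemma~\ref{l:frel}: it shows that the Fraenkel asymmetries $\lambda(\{u>s\})$ for $s\in[-1+2\kappa,1-2\kappa]$ all agree up to $O(\phi^{1/3})/\nu(u)$, using only the triangle inequality for symmetric differences and the fact (corollary~\ref{cor.1}) that the volumes $|\{u>s\}|$ lie within $O(\phi^{1/3})$ of $\nu(u)$. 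One may therefore replace $\lambda(\{u>s\})$ in the integrand by its supremum (minus a harmless error), pull it out, and integrate the remaining weight to recover an order-one factor. No control of individual perimeters is ever required.

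Your Step~3 is also more elaborate than necessary. The paper does not align Fraenkel centers across levels; lemma~\ref{l:l2fr} directly gives $|u-\Psi(\cdot;\nu(u))|_{\tphil}^2\lesssim\lambda(\{u\geq 1-2\kappa\})+O(\phi^{1/3})$, and the passage to $\Psi_m$, $\Psi_s$ then follows from the volume estimates \eqref{num}, \eqref{wmm2} and a triangle inequality.
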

\begin{remark}[Near local minimizers and nearly optimal paths]\label{rem:all}
To be concrete, we state theorems \ref{t:exist}, \ref{t:saddle}, and \ref{t:spheres} in terms of the local minimizer $\um$ and the constrained minimizer $\uws$. However neither result uses the Euler-Lagrange equation, and a corollary to the proofs may be stated in the following form:

Any function $\tilde{u}\in X_\phi$ that is a nearly local minimizer in the sense that
\begin{align*}
  \nu(\tilde{u})=\nu_m+O(\phi^{1/6}),\qquad \ephi(\tilde{u})\leq c_m+O(\phi^{1/3})
\end{align*}
is droplet-shaped and close to the sharp-interface minimizer in the sense that
\begin{align}
  \lambda(\{\tilde{u}>s\})&\lesssim \phi^{\alpha}\qquad  \text{ for every $s\in[-1+2\phi^{1/3},1-2\phi^{1/3}]$},\label{paht3}\\
\text{and}\qquad  |\tilde{u}-\Psi_m|_{\tphil}^2&\lesssim \phi^{\alpha},\notag
\end{align}
where, as in theorem \ref{t:spheres}, $ \alpha=\min\{1/6,1/(2d)\}$.
Similarly, any function $\tilde{u}\in X_\phi$ such that
\begin{align}
  \nu(\tilde{u})=\nu_s,\qquad \ephi(\tilde{u})\leq c_s+O(\phi^{1/3})\label{paht}
\end{align}
is droplet-shaped and close to the sharp-interface saddle in the sense of \eqref{paht3} and
\begin{align}
  |\tilde{u}-\Psi_s|_{\tphil}^2\lesssim \phi^{\alpha}.\label{paht2}
\end{align}
In particular, we use the second fact to make the following observation. Although we do not determine the volume or shape of the saddle points $\us$ and $\ust$,
we do know that \emph{any path that is nearly optimal for $\de$ contains a point $\tilde{u}$ satisfying \eqref{paht}}, and hence contains an approximate droplet-state that is well-approximated by the limit saddle, in the sense made precise by \eqref{paht3} and \eqref{paht2}.
\end{remark}

We use Steiner symmetrization to obtain additional information about the qualitative properties of $\um$ and $\uws$ in section \ref{S:stf}. To improve from the existence of a Steiner symmetric minimizer to the fact that any constrained minimizer is Steiner symmetric, the main issue is the behavior of the gradient energy under symmetrization. To this end, we apply the fairly recent work \cite{CF}.
Our main result takes the following form.
\begin{proposition}\label{prop:steiner}
Consider $\xi\in (0,\xi_d]$, the critical scaling \eqref{crit}, and $\omega_1>0$.  For $\phi>0$ sufficiently small, the following holds true. For any $\omega\in [\omega_1,\xi^{d+1}/2]$, any volume-constrained minimizer $\uo$ is (up to a translation) equal to its Steiner symmetrization about the origin. In particular, its superlevel sets are simply connected and $\uo$ is strictly decreasing in all directions away from the unique point of maximum.
\end{proposition}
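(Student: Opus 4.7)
The strategy is Steiner symmetrization in each coordinate direction successively, combined with the equality-case characterization of \cite{CF}. I would organize the argument in four steps.

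First, I would localize. Theorem \ref{t:spheres}, specifically the estimate \eqref{uwtoo}, ensures that for $\omega\in[\omega_1,\xi^{d+1}/2]$ and $\phi$ sufficiently small, each volume-constrained minimizer $\uo$ is (after a translation) close in $L^2$ to $\Psi(\cdot,\omega)$ centered at the origin, with small Fraenkel asymmetry on every intermediate superlevel set. Since the volume of $\{\uo>s\}$ is therefore much smaller than $|\tphil|$ uniformly in $\omega\in[\omega_1,\xi^{d+1}/2]$, the relevant superlevel sets fit strictly inside a fundamental domain, and Steiner symmetrization on $\tphil$ reduces to standard Euclidean Steiner symmetrization applied to a compactly supported perturbation of $-1$.

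Second, for each coordinate direction $e_i$ let $\uo^{*i}$ denote the Steiner symmetrization of $\uo$ about the hyperplane $\{x_i=0\}$. Symmetrization preserves the distribution function of $\uo$, and therefore also the mean constraint defining $X_\phi$, the potential integral $\int G(\uo)\,dx$, and the volume functional $\nu(\uo)=\int \chi_3(\uo)\,dx$. The classical P\'olya--Szeg\H o inequality supplies $\int|\nabla \uo^{*i}|^2\,dx\leq\int|\nabla \uo|^2\,dx$, so $\uo^{*i}\in X_\phi$ is admissible for the constrained minimization problem defining $\ehat(\omega)$, and $\ephi(\uo^{*i})\leq\ephi(\uo)$. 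Minimality of $\uo$ forces equality throughout.

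Third, to upgrade the P\'olya--Szeg\H o equality to actual symmetry of $\uo$ itself, I would apply \cite{CF}. The key hypothesis there is a nondegeneracy condition precluding ``flat'' portions of $\uo^{*i}$ in direction $e_i$. This is ensured because $\uo$ satisfies an Euler--Lagrange equation of the form $-\phi\Delta \uo+\phi^{-1}G'(\uo)=\mu+\lambda\,\chi_3'(\uo)$ for Lagrange multipliers $\mu,\lambda$ associated with the mean and volume constraints: away from the two thin transition strips where $\chi_3'(\uo)\neq 0$, the equation is a smooth semilinear elliptic one for which unique continuation rules out nontrivial flat level sets, and standard elliptic regularity handles the strips. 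One then concludes $\uo=\uo^{*i}$ up to a translation in $e_i$; iterating over $i=1,\ldots,d$ yields the asserted Steiner symmetry about the origin. The equality case of \cite{CF} further asserts that almost every fiber of $\uo^{*i}$ parallel to $e_i$ is an interval; combined over all coordinate directions this gives simply connected superlevel sets, and strict monotonicity away from the unique maximum then follows from the Hopf lemma applied to the elliptic equation on each superlevel set.

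The main obstacle will be the third step: rigorously verifying the nondegeneracy hypothesis of \cite{CF} in the presence of the nonsmooth multiplier term $\lambda\,\chi_3'(\uo)$, which couples the intermediate level sets (where the classical unique-continuation argument applies) to the transition strips (where $\chi_3'$ is genuinely active) and must be treated by a separate bootstrap.
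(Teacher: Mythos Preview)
Your overall strategy matches the paper's, but Step~3 has a genuine gap that the paper resolves by a different and much simpler mechanism.

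You attempt to verify the nondegeneracy hypothesis of \cite{CF} for $u=\uo$ directly, via unique continuation for the Euler--Lagrange equation, and you correctly flag the multiplier term $\lambda\,\chi_3'(\uo)$ as an obstacle. The paper sidesteps this entirely. Since your Step~2 establishes $\ephi(u^{*})=\ephi(u)$, the symmetrization $u^{*}$ is \emph{itself} a volume-constrained minimizer and hence a smooth solution of the same Euler--Lagrange equation \eqref{euler2}. Differentiating \eqref{euler2} in $x_i$ gives a linear elliptic equation for $\partial_i u^{*}$ with bounded zeroth-order coefficient $\phi^{-1}G''(u^{*})+\lambda_\omega\chi_3''(u^{*})$; note that $\chi_3$ is smooth by construction (Notation~\ref{notvol}), so no bootstrap or regional decomposition is needed. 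Since $\partial_i u^{*}\le 0$ on $\{x_i\ge 0\}$ already by construction of the symmetrization, Serrin's strong maximum principle forces either $\partial_i u^{*}\equiv 0$ or $\partial_i u^{*}<0$ strictly on $\{0<x_i<\phi L/2\}$. The first alternative is ruled out by the $L^2$-closeness to $\Psi(\cdot;\omega)$ from \eqref{uwtoo}. This delivers condition~\eqref{keyass} for $u^{*}$; Lemma~\ref{steinereq} then transfers it to $u$, and Theorem~\ref{t:steineq} gives $u=u^{*}$ up to translation.

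In short: verify nondegeneracy for $u^{*}$ (where monotonicity is free and the strong maximum principle suffices), not for $u$ (where you would need unique continuation). Your localization in Step~1 is also unnecessary, since the paper works with Steiner symmetrization directly on the torus via Theorem~\ref{t:steineq}.
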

In $d=2$, we use the connectedness of the superlevel sets from proposition \ref{prop:steiner} together with the Bonnesen inequality to obtain even stronger information about the droplet-like shape of the critical points; see proposition \ref{Asym.stronger}.



\subsection{Additional related results in the literature}

Previous analysis has focused on \eqref{ep} with fixed mean and $\phi$ small or order one. In dimension one, the  global minimizer and saddle point were analyzed in \cites{CGS,BFi} and stochastic nucleation was analyzed in \cite{BGW}.
In $d\geq 2$, some of the first results on the qualitative properties of critical points appear in \cite{GM,NT}.
More recently, dynamical systems techniques lie at the heart of a series of papers by Wei and Winter \cite{WW}-\cite{WW4} and Bates and Fusco \cite{BFu},
in which so-called spike and bubble solutions of the Cahn-Hilliard energy are analyzed. In particular, \cite{WW2} establishes existence of critical points that possess an ``interior spike,'' \cites{BFu,WW4} establish the existence and properties of critical points with multiple interior spikes, and \cite{WW3} establishes existence of critical points with a spherical interface.
While our parameter regime leads (for both the minimum and saddle point) to the scale separation
\begin{align*}
 \underbrace{ \begin{pmatrix}\text{lengthscale of $\Omega$} \end{pmatrix}}_{``macroscale''}
 \gg
 \underbrace{\begin{pmatrix}\text{lengthscale of droplet}\end{pmatrix}}_{``mesoscale''}
 \gg
 \underbrace{\begin{pmatrix}
    \text{lengthscale of the interface}
  \end{pmatrix}}_{``microscale''},
\end{align*}
the interior spikes of \cites{WW2,WW4,BFu}  has no mesoscopic scale
and the bubbles of \cite{WW3} satisfy
\begin{align*}
  \begin{pmatrix}\text{lengthscale of $\Omega$} \end{pmatrix}\sim \begin{pmatrix}\text{lengthscale of droplet}\end{pmatrix}\gg \begin{pmatrix}
    \text{lengthscale of the interface}
  \end{pmatrix}.
\end{align*}

The idea of using  $\Gamma$-convergence to establish existence of local (and not just global) minimizers goes back to Kohn and Sternberg \cite{KS}. They study the zeroth order Allen-Cahn energy (i.e., the energy $E_\phi$ defined in \eqref{ep} with no mean constraint) on nonconvex domains and establish the existence of $L^1$ local minimizers for $\phi$ small. They also comment on the constrained problem for $\phi\ll 1$, although they do not study droplet-type functions. In a similar spirit, Choksi and Sternberg \cite{CS} study $E_\phi$ on the unit flat torus for $d=2$ and with a fixed mean constraint $m$. After establishing that disks and strips always locally minimize perimeter, they deduce the existence of nearby droplet and strip local minimizers of $E_\phi$ for small $\phi$.
Another implementation of this idea can be found in Chen and Kowalczyk \cite{CK} who, utilizing local maxima of the curvature of $\partial \Omega$, show the existence of local minimizers of the energy \eqref{ep} on a smooth bounded domain $\Omega\subset \R^2$ for small $\phi$ and a fixed mean $m$.
In related work, Sternberg and Zumbrun \cite{SZ} study the Cahn-Hilliard energy landscape for strictly convex domains $\Omega\subset\R^d$ and show that stable critical points have a thin, connected transition layer connecting the pure phases $\approx \pm 1$.
To contrast with our paper, we point out that in \cites{KS,CK,SZ} the \emph{geometry of the domain} plays a central role, whereas in our work the central role is played by the \emph{nonconvexity of the $\Gamma$-limit}.

\subsection{Generalizations and organization}\label{ss:org}
Working on the torus is not important for the energetic bounds and $\Gamma$-limit, however periodicity allows us to apply the quantitative isoperimetric inequality of \cite{FMP}. In addition working on the torus allows us to exploit Steiner symmetrization.
Rather than taking the standard double-well potential \eqref{gbl}, it is straightforward to consider more general double-well potentials. Normalizing as usual so that the global minima  are $\pm1$, we may consider any potential $G\in C^2(\R)$ such that
\begin{itemize}
\item $G(\pm1)=0$ and $G(u)>0$ for all $u\in \R\setminus \{\pm1\}$,
\item $G''(\pm 1)>0$,
\item $G$ is convex on $(-\infty, -1)$ and $(1,\infty)$,
\item $G(u)\gtrsim u^2$ for $|u|\gg 1$.
\end{itemize}
If the last assumption is replaced by $G(u)\gtrsim |u|^p$ for $p\in(1,\infty)$, then then our results hold for $L^p(\tphil)$.


We begin in section \ref{S:prelim} by recalling and establishing some preliminary estimates. Then in section \ref{S:min}, we establish and exploit connections between the $\Gamma$-limit and the original energy. In particular, in subsection
\ref{ss:locmin} we prove theorem \ref{t:exist}, establishing existence and initial properties of the local minimizer $\um$. In subsection \ref{ss:sadtoo}, we prove the corresponding results for the saddle point, deducing in particular proposition \ref{t:exus} and theorem \ref{t:saddle}.
Deviation from sphericity is quantified in subsection \ref{ss:dev}, proving theorem \ref{t:spheres} and establishing  that $\um$ is a volume-constrained minimizer. Two auxiliary lemmas are proved in subsection \ref{ss:pflemmas}.

Section \ref{S:stf} derives additional properties of the constrained minimizers using Steiner symmetrization and the Euler-Lagrange equation. After introducing Steiner symmetrization in subsection \ref{ss:steiner}, we apply it in subsection \ref{ss:steinuo} to deduce that constrained minimizers are Steiner symmetric. In the final subsection \ref{S:finer}, the resulting connectedness of superlevel sets is used together with the Bonnesen inequality in $d=2$ to obtain a sharper characterization of the droplet-like shape of constrained minimizers.


\section{Notation and preliminary estimates}\label{S:prelim}
In this section we recall some facts from \cite{GW} and establish some preliminary estimates.
To begin we introduce some additional notation that will be used throughout the paper.
\begin{notation}\label{not:oh}
  For nonnegative quantities $X$ and $Y$, we write $X \lesssim Y$ to indicate that there exists a constant $C>0$ that depends at most on $d$ such that $X \leq C Y$ for small enough $\phi>0$. Writing $X \sim Y$ means that $X \lesssim Y$ and $Y \lesssim X$. In addition we use the standard $O(\cdot)$ and $o(\cdot)$ notation (where again dependency on $d$ is permitted) with respect to $\phi \to 0$ (unless otherwise indicated).
\end{notation}
We recall for reference below that in the critical scaling \eqref{crit}, there holds
\begin{align}
               \frac{1}{\phi}\int_{\tphil} G(-1+\phi)\,dx &\to \xi^{d+1},\label{f2}\\
               \phi|\tphil|&= \xi^{d+1}.\label{f1}
              \end{align}
As above, we abbreviate $\kappa:=\phi^{1/3}$. Given a function $u:\tphil\to \R$, we define the partition of $\tphil$ via
\begin{align}
\cala(u):&=\{u\leq -1-\kappa\},\notag\\
\calb(u):&=\{-1-\kappa<u\leq-1+\kappa\},\notag\\
\calc(u):&=\{-1+\kappa<u\leq 1-\kappa\},\notag\\
\cald(u):&=\{1-\kappa< u\leq 1+\kappa\},\label{calc}\\
\cale(u):&=\{u>1+\kappa\}.\notag
\end{align}
For simplicity we write $\cala,\dots,\cale$ instead of $\cala(u),\dots,\cale(u)$ when there is no danger of confusion.

Recall the partition of unity and approximate volume functional $\nu(\cdot)$ from notation \ref{notvol}. We will use the partition of unity to decompose the energy \eqref{ephi} as
\begin{equation}\label{sl.10.5}
\ephi(u)=\int_{\tphil}e_\phi(u)\Big(\chi_1(u)+\chi_2(u)+\chi_3(u)\Big)\,dx.
\end{equation}
In order to maintain universal constants in our estimates, we often restrict to a given range of volumes $\nu(u)$.
Also, because we are interested in functions of relatively low energy, we will often restrict to
\begin{align}
\ephi(u)\leq \max_{\xi\in [\tilde{\xi}_d,\xi_d]}\max\left\{2c_s, f_\xi\left(\frac{\xi^{d+1}}{2}\right) \right\}+1=:E_{M}.\label{emm}
\end{align}
One can instead consider functions with
\begin{align*}
\ephi(u)\leq C\qquad \text{for }C<\infty,
\end{align*}
but then some bounds will depend on $C$.

\subsection{Lower and upper bounds}\label{ss:lowup}
In this subsection we summarize the  lower and upper bounds from \cite{GW} that we will need in the sequel. In addition we slightly refine the upper bound.

We begin by pointing out that the proof of \cite[proposition 2.4]{GW} rules out functions with order one energy and large volumes of $u\approx 1$, as we summarize in the following lemma.
\begin{lemma}\label{nobigsets}
  Let $\xi\in(0,\xi_d]$ and consider the critical scaling \eqref{crit}. For every $\epsilon_0>0$, there exists $\phi_0>0$ such that for all $\phi\leq \phi_0$ and $u\in X_\phi$, there holds
  \begin{align*}
    \nu(u)\geq \epsilon_0\,(\phi \,L)^d\quad\Rightarrow \quad \ephi(u)\gg 1 \quad\text{and hence}\quad\ephi(u)>E_M \;(\text{cf.\ \eqref{emm}}).
  \end{align*}
\end{lemma}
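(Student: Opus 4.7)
The plan is to show that the hypothesis $\nu(u)\geq\epsilon_0(\phi L)^d$ forces $u$ to carry so much mass below $-1$ (to compensate for the large ``plus'' region and still satisfy the mean constraint) that the potential part of $\ephi(u)$ blows up like $\phi^{-2}$. The argument follows the strategy in the proof of \cite[Proposition~2.4]{GW}.

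First, I would simplify $\ephi(u)$ using the mean constraint. Because $\int_{\tphil}(u-(-1+\phi))\,dx=0$ for $u\in X_\phi$, the linear contribution in $e_\phi(u)$ drops out and
\begin{equation*}
\ephi(u)=\int_{\tphil}\frac{\phi}{2}|\nabla u|^2\,dx+\frac{1}{\phi}\int_{\tphil}G(u)\,dx-\frac{|\tphil|}{\phi}G(-1+\phi);
\end{equation*}
by \eqref{f1} and $G(-1+\phi)=\phi^2(2-\phi)^2/4$, the last term equals $(2-\phi)^2\xi^{d+1}/4=O(1)$. Since $\chi_3\leq \mathbf{1}_{\{u>1-2\kappa\}}$, the set $S:=\{u>1-2\kappa\}$ satisfies $|S|\geq\nu(u)\geq\epsilon_0|\tphil|$ and $u\geq 1-2\kappa$ on $S$, so combining $\int_S(u+1)\,dx\geq(2-2\kappa)\epsilon_0|\tphil|$ with the mean identity $\int_{\tphil}(u+1)\,dx=\xi^{d+1}$ yields, for $\phi$ small,
\begin{equation*}
\int_{\tphil}(u+1)_-\,dx\geq (2-2\kappa)\epsilon_0|\tphil|-\xi^{d+1}\geq\epsilon_0|\tphil|=\epsilon_0\xi^{d+1}/\phi.
\end{equation*}

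Finally, I would invoke coercivity of $G$. For the standard potential, writing $u=-1-t$ with $t\geq 0$ yields $G(u)=t^2(t+2)^2/4\geq t^2=(u+1)^2$ on $\{u\leq -1\}$, and the same inequality (up to a constant) follows from the general hypotheses on $G$ in subsection~\ref{ss:org}. Combining this with Cauchy--Schwarz and $|\{u<-1\}|\leq|\tphil|=\xi^{d+1}/\phi$,
\begin{equation*}
\int_{\tphil}G(u)\,dx\geq\int_{\{u<-1\}}(u+1)^2\,dx\geq\frac{\bigl(\int_{\tphil}(u+1)_-\,dx\bigr)^2}{|\tphil|}\geq\epsilon_0^2\xi^{d+1}/\phi,
\end{equation*}
so that $\phi^{-1}\int_{\tphil}G(u)\,dx\geq\epsilon_0^2\xi^{d+1}/\phi^2\to\infty$ as $\phi\to 0$, and hence $\ephi(u)>E_M$ for $\phi$ small enough (depending on $\epsilon_0,\xi,d,E_M$). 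The only delicate point is that the quadratic lower bound on $G$ must remain valid where $u$ is very negative; this is ensured by the at-least-quadratic growth of $G$ at infinity, so the estimate applies on all of $\{u\leq -1\}$. No use is made of the gradient term.
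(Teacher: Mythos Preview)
Your proof is correct. The core mechanism matches the paper's: the mean constraint together with $\nu(u)\geq\epsilon_0|\tphil|$ forces a large ``undershoot'' of $u$ below $-1$, and a Jensen/Cauchy--Schwarz estimate turns this into a potential cost of order $\phi^{-2}$.

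The paper's proof, however, is organized differently. It argues by contradiction (assume $\ephi(u)\lesssim 1$), decomposes $e_\phi(u)$ via the partition $\chi_1,\chi_2,\chi_3$, and cites \cite[lemma 2.3, proposition 2.4]{GW} for the three pieces: $\int e_\phi\chi_2\geq 0$, $\int e_\phi\chi_3\gtrsim -\nu(u)$, and (via convexity and Jensen on the $\chi_1$ region) $\int e_\phi\chi_1\gtrsim \nu(u)^2$, summing to $\ephi(u)\gtrsim\nu(u)^2\gg 1$. Your route bypasses the $\chi$-machinery and the citations entirely: you work directly with $\int G(u)$, isolate the undershoot $\int(u+1)_-$ from the mean identity, and apply Cauchy--Schwarz on $\{u<-1\}$. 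This is more self-contained and slightly sharper (no need for the WLOG energy bound or the truncation lemma from \cite{GW}), at the cost of not plugging into the finer $f_\xi(\nu(u))$-type lower bounds used elsewhere in the paper. Both arrive at $\ephi(u)\gtrsim\phi^{-2}$.
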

\begin{proof}
First we point out that, according to the scaling \eqref{crit}, there holds
\begin{align}
  \epsilon_0\,(\phi \,L)^d=\frac{\epsilon_0 \xi^{d+1}}{\phi}.\label{epphi}
\end{align}
Also, we may without loss of generality assume that $\ephi(u)\lesssim 1$. Invoking  \cite[lemma 2.3]{GW} and estimating as in  \cite[proposition 2.4]{GW}, we observe
  \begin{align*}
    \int_{\tphil}e_\phi(u)\,\chi_2(u)\,dx\geq 0,\quad  \int_{\tphil}e_\phi(u)\,\chi_3(u)\,dx\gtrsim -\nu(u),
  \end{align*}
  while convexity, Jensen's inequality and the assumption $\nu(u)\geq \epsilon_0\,(\phi \,L)^d$ lead to
  \begin{align*}
     \int_{\tphil}e_\phi(u)\,\chi_1(u)\,dx\gtrsim \nu(u)^2.
  \end{align*}
  Choosing $\phi_0$ small enough so that $\nu(u)$ (according to \eqref{epphi}) is sufficiently large and adding the energy estimates leads to
  \begin{align*}
    \ephi(u)\gtrsim \nu(u)^2\qquad\text{and hence to }\qquad \ephi(u)\gg 1.
  \end{align*}
\end{proof}

The lower bound follows directly from \cite[proposition 2.4]{GW} after rescaling, applying \eqref{Regular.Isop.Torus2}, and noting that, according to the previous lemma, we may deduce from \eqref{em.sept} that $\nu(u)\leq \epsilon_0\,(\phi \,L)^d$ for $\epsilon_0=\epsilon_0(d)$ as in \cite[proposition 2.4]{GW}.
\begin{proposition}[Lower bound \cite{GW}]\label{lemma.lower.bound}
In the critical regime \eqref{crit}, for any $\omega>0$ and for $\phi>0$ sufficiently small, the following holds. If $u\in X_\phi$  satisfies
\begin{align}
\quad\ephi(u)\leq E_M, \label{em.sept}
\end{align}
then the energy is bounded below by
\begin{align*}
  \ephi(u)\geq C_1(\phi) \nu(u)^{(d-1)/d}-C_2(\phi)\nu(u),
\end{align*}
where
\begin{align*}
  C_1(\phi)=(1-8\phi^{1/3})^{1/2}(c_0-8\sqrt{2}\phi^{2/3})\sigma_d^{1/d}d^{(d-1)/d},\qquad C_2(\phi)=(2+\phi^{1/3})(2-3\phi).
\end{align*}
Moreover, if $u$ in addition satisfies
\begin{align*}
\nu(u)\geq \omega,
\end{align*}
then
\begin{equation}\label{lower.bound}
\ephi(u)\geq f_\xi (\nu(u))+I(u)+O(\phi^{1/3}),
\end{equation}
where
\begin{equation}\label{DefnIsopTerm}
I(u):=\int_{-1+2\phi^{1/3}}^{1-2\phi^{1/3}}\sqrt{2\tilde{G}(t)}\,\Big({\rm Per}_{\tphil}(\{u>t\})-{\rm P}_E(\{u>t\})\Big)\,dt.
\end{equation}
Here we have abbreviated
\[
\tilde{G}(t):=(1-8\phi^{1/3})G(t).
\]
\end{proposition}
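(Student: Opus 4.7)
My plan is to follow the argument of \cite[Proposition~2.4]{GW}, with two adjustments for the torus. The key preliminary step is to secure the hypotheses of the isoperimetric inequality \eqref{Regular.Isop.Torus2}. Take $\epsilon_0=\epsilon(d)$ to be the dimensional constant of theorem~\ref{Regular.Isop.Torus}. Lemma~\ref{nobigsets} applied with this $\epsilon_0$ shows that $\ephi(u)\leq E_M$ forces $\nu(u)\leq\epsilon_0(\phi L)^d=\epsilon_0|\tphil|$ for $\phi$ small. Combined with the observation that $\int\frac{1}{\phi}G(u)\chi_2(u)\,dx\lesssim 1$ (from the energy bound) while $G\gtrsim 1$ on $\calc$, this also yields $|\calc|\lesssim\phi$, so for every $t\in[-1+2\kappa,1-2\kappa]$ the superlevel set $\{u>t\}\subset\calc\cup\cald\cup\cale$ has measure at most $\epsilon_0|\tphil|$. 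Hence \eqref{Regular.Isop.Torus2} and \eqref{SharpIsoper2} are available on every such set.

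I would next decompose $\ephi(u)$ via \eqref{sl.10.5} and bound the three pieces as in \cite[Proposition~2.4]{GW}. For the $\chi_2$ contribution, the Modica--Mortola inequality $\frac{\phi}{2}|\nabla u|^2+\frac{1-8\kappa}{\phi}G(u)\geq|\nabla u|\sqrt{2\tilde G(u)}$ combined with the coarea formula produces
\begin{align*}
\int_{\tphil}e_\phi(u)\chi_2(u)\,dx \;\geq\; \int_{-1+2\kappa}^{1-2\kappa}\sqrt{2\tilde G(t)}\,\per_{\tphil}(\{u>t\})\,dt \;-\; O(\kappa),
\end{align*}
the $O(\kappa)$ absorbing the $\frac{8\kappa}{\phi}\int G(u)\chi_2\,dx$ discarded in the Modica--Mortola step. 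Applying \eqref{Regular.Isop.Torus2} replaces $\per_{\tphil}$ by ${\rm P}_E$ at the cost of the nonnegative deficit $I(u)$; using ${\rm P}_E(\{u>t\})\geq{\rm P}_E(\nu(u))$ for a.e.\ such $t$ (since $\{u>t\}\supset\cald\cup\cale$ up to an $O(\kappa)$ correction) and $\int_{-1+2\kappa}^{1-2\kappa}\sqrt{2G(t)}\,dt=c_0-O(\phi^{2/3})$ produces the asserted $C_1(\phi)\nu(u)^{(d-1)/d}$ term. The $\chi_3$ piece is estimated pointwise via a Taylor expansion of $G$ around $+1$ and $G'(-1+\phi)=-4+O(\phi)$, yielding a $-C_2(\phi)\nu(u)$ contribution (sharpening to $-4\nu(u)+O(\phi)$ in the refined bound). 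For the first bound, the nonnegative $\chi_1$ piece is dropped; for the refined bound, when $\nu(u)\geq\omega$, I use convexity of $G$ on $(-\infty,-1)$ and a second-order expansion around $-1+\phi$ to bound it below by $\int_\cala\frac{G''(-1)+O(\kappa)}{2\phi}(u+1-\phi)^2\,dx$, and then the mean constraint together with Jensen's inequality on $\cala$ (valid because $|\cala|\sim|\tphil|$) produces $\frac{4\nu(u)^2}{|\tphil|}+O(\phi^{1/3})=4\xi^{-(d+1)}\nu(u)^2+O(\phi^{1/3})$ after invoking $\phi|\tphil|=\xi^{d+1}$. Adding the three contributions yields both bounds.

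The main obstacle is essentially bookkeeping: \cite[Proposition~2.4]{GW} supplies the substantive content, and the work here consists of tracking the $\kappa=\phi^{1/3}$ corrections arising from the partition-of-unity boundary layers and the Modica--Mortola factor $(1-8\kappa)^{1/2}$, checking that they land inside the explicit constants $C_1(\phi),C_2(\phi)$ in the first bound and the $O(\phi^{1/3})$ error in the second. The hypothesis $\nu(u)\geq\omega$ plays its role only in the Jensen step of the $\chi_1$ estimate, where it ensures that the mean-constraint transfer from $\cala^c$ to $\cala$ produces a quadratic contribution of the right size.
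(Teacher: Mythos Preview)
Your approach is correct and is precisely what the paper does: it simply cites \cite[Proposition~2.4]{GW}, invokes lemma~\ref{nobigsets} to guarantee the smallness hypothesis for the torus isoperimetric inequality \eqref{Regular.Isop.Torus2}, and rescales. Your outline of the $\chi_1/\chi_2/\chi_3$ decomposition, the Modica--Mortola/coarea step for $\chi_2$, and the Jensen step for $\chi_1$ is exactly that argument.

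A few minor slips to fix in your write-up (none affect the strategy):
\begin{itemize}
\item On $\calc$ one only has $G\gtrsim\kappa^2=\phi^{2/3}$ (near the endpoints $u=\pm 1\mp\kappa$), not $G\gtrsim 1$; this yields $|\calc|\lesssim\phi^{1/3}$ rather than $\phi$, which is still ample for the isoperimetric smallness condition.
\item $G'(-1+\phi)=2\phi+O(\phi^2)$, not $-4+O(\phi)$. The $-4$ in the $\chi_3$ contribution arises from $-\tfrac{1}{\phi}G'(-1+\phi)(u+1-\phi)\approx -2\cdot 2$ on $\cald$.
\item You write ``$|\cala|\sim|\tphil|$'', but $\cala=\{u\le -1-\kappa\}$ is small ($\lesssim\phi^{1/3}$); the set of nearly full measure on which Jensen is applied is the support of $\chi_1$ (essentially $\cala\cup\calb$).
\end{itemize}
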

The positive functional $I(\cdot)$ defined in \eqref{DefnIsopTerm} can be thought of as the extra term in the surface tension owing to the deviation from sphericity of the superlevel sets $\{u>t\}$.

The next proposition provides the upper bound on the energy that we need. It is based on an idea from \cite{BCK} and the upper bound construction is used explicitly in \cite[lemma 2.2]{CCELM} and later in  \cite[lemma 3.5]{GW}.
\begin{proposition}[Upper bound]\label{prop.upper.bound}
Consider $\xi\in(\tilde{\xi_d},\xi_d]$ and the critical regime \eqref{crit}, and fix $\omega_1,\omega_2$ such $0<\omega_1<\nu_s<\omega_2<\xi^{d+1}/2$. Then for small enough $\phi>0$ there exists a path $\hat{\psi}\in C([0,1], X_\phi)$ and a time $t_1\in(0,1)$ satisfying
\[\hat\psi(0)\equiv \bar{u},\quad \nu(\hat \psi(t_1))=\omega_1,\quad \nu(\hat{\psi}(1))=\omega_2,
\]
such that the energy on the first part of the time interval is bounded as
\[
\quad  \sup_{t\in[0,t_1]}\ephi(\hat{\psi}(t))= f_\xi(\omega_1)+ o(1),
\]
and such that the construction on the second part of the time interval is close in $L^2$ and energy to a sharp interface profile in the sense that, for all $t\in[t_1,1]$, there holds
\begin{equation}\label{upper.bound}
|\hat \psi (t)-\Psi(\cdot;\nu(\hat \psi (t)))|_{\tphil}^2\lesssim\phi|\ln\phi|,\quad \ephi(\hat \psi(t))=f_\xi(\nu(\hat \psi(t)))+O(\phi |\ln \phi|).
\end{equation}
\end{proposition}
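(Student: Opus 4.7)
The plan is to construct $\hat\psi$ explicitly in two phases — Phase 2 (from $t_1$ to $1$) implementing the standard nearly sharp-interface droplet ansatz along a continuous family of volumes, and Phase 1 (from $0$ to $t_1$) providing a continuous interpolation from the uniform state $\baru$ into the start of Phase 2.

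\emph{Phase 2: droplet of variable volume.} For $\omega \in [\omega_1,\omega_2]$ I would use the ansatz of \cite[lemma 2.2]{CCELM}, also employed in \cite[lemma 3.5]{GW}. Let $\eta \colon \R \to (-1,1)$ be the 1D optimal transition profile satisfying $\eta'=\sqrt{2G(\eta)}$, $\eta(0)=0$, and let $r_\omega$ be a radius chosen so that the truncated radial ansatz encloses the prescribed volume. Set
\[
v_\omega(x) \;=\; \eta\bigl(r_\omega-|x|\bigr) + c_\phi(\omega),
\]
where $c_\phi(\omega)$ is a small additive constant (of order $\phi|\ln\phi|/|\tphil|$) tuned so that $v_\omega \in X_\phi$. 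Because $|\eta(s)\pm 1|\lesssim e^{-c|s|}$, one must truncate at height $O(\phi)$ above $-1$, which happens on an annular layer of width $O(|\ln\phi|)$ and produces the $|\ln\phi|$ factor in the error. Applying equipartition $\tfrac{1}{2}(\eta')^2=G(\eta)$ on the interface yields the surface contribution $c_0\per(\partial B_{r_\omega}) = \bar{C}_1\omega^{(d-1)/d}$; Taylor-expanding $G$ around $-1$ in the exterior yields the $-4\omega+4\xi^{-(d+1)}\omega^2$ terms of $f_\xi$; the remaining discrepancies are absorbed into the $O(\phi|\ln\phi|)$ error. The $L^2$ estimate $|v_\omega-\Psi(\cdot;\omega)|_{\tphil}^2\lesssim \phi|\ln\phi|$ follows by splitting the integral into the interfacial annulus (measure $O(|\ln\phi|\, r_\omega^{d-1})$) and the exterior tail (controlled by exponential decay of $\eta$). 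Continuity of $\omega\mapsto v_\omega$ in $X_\phi$ is straightforward.

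\emph{Phase 1: growing from the uniform state.} I would extend the family to $\omega\in[0,\omega_1]$ so that $v_0\equiv\baru$ and $\ephi(v_\omega)=f_\xi(\omega)+o(1)$, with $o(1)$ uniform in $\omega\in[0,\omega_1]$ as $\phi\to 0$. For $\omega$ large enough that $r_\omega$ exceeds several multiples of the interface width, the Phase 2 ansatz still applies. For very small $\omega$, where the droplet radius is comparable to the interface thickness, I would replace $\eta$ by a \emph{capped} radial profile whose maximum lies in $(1-\kappa,1)$ (enough for $\chi_3$ to register the volume), smoothly deforming to a constant-zero perturbation as $\omega\to 0$; equivalently, one can use a straight-line interpolation in $X_\phi$ between $\baru$ and the smallest Phase 2 profile. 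On this degenerate regime $\ephi(v_\omega)$ is no longer sharply $f_\xi(\omega)$, but the excess is $o(1)$ as $\phi\to 0$. Since $f_\xi$ is monotonically increasing on $[0,\nu_s]\supset[0,\omega_1]$,
\[
\sup_{\omega\in[0,\omega_1]} \ephi(v_\omega) \;=\; f_\xi(\omega_1)+o(1).
\]
Reparametrizing $t\in[0,1]\mapsto \omega(t)\in[0,\omega_2]$ and setting $\hat\psi(t):=v_{\omega(t)}$ then gives a continuous path, with $t_1$ determined by $\omega(t_1)=\omega_1$.

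\emph{Main obstacle.} The delicate point is the transitional regime in Phase 1 where the droplet radius is comparable to the interface width: here the sharp-interface estimates degenerate and the construction requires a separate bump-based ansatz whose energy excess above $f_\xi(\omega)$ is only controlled as $o(1)$, not $O(\phi|\ln\phi|)$, which is precisely why the Phase 1 bound is weaker than the Phase 2 one. Enforcing the mean constraint continuously along the whole path via the correction $c_\phi(\omega)$ is routine, but the correction must be tracked to verify that it contributes only at the $O(\phi|\ln\phi|)$ level in Phase 2 and that $c_\phi(\omega)\to 0$ as $\omega\to 0$ in Phase 1.
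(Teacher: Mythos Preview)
Your approach is correct and essentially identical to the paper's: a truncated $\tanh$-based radial droplet profile with an additive constant enforcing the mean constraint for Phase~2 (the paper cites \cite[lemma~3.5]{GW} and works with $v_R(|x|-r_\eta)+\alpha(\eta)$ on $\mathbb{T}_L$, then rescales to $\tphil$), together with a linear interpolation from $\bar u$ to a small-volume droplet for Phase~1 (the paper cites \cite[lemma~3.2]{GW}), with the $O(\phi|\ln\phi|)$ error arising from the logarithmic tails of $\tanh$. One cosmetic point: on $\tphil$ the profile should read $\eta\bigl((r_\omega-|x|)/\phi\bigr)$, so the transition annulus has width $O(\phi|\ln\phi|)$ in these coordinates.
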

\begin{proof}
As in \cite[proposition 3.1]{GW} the path is obtained by a construction which---in our current scaling---consists of a linear interpolation between $\bar{u}$ and a ``droplet-like state'' of small volume, cf.  \cite[lemma 3.2]{GW}, followed by a family of droplet-like states such that the volume grows from $\omega_1$ at $t=t_1$ to $\omega_2$ for the final state $\hat \psi(1)$, cf. \cite[lemma 3.5]{GW}.

We begin by recalling the construction of \cite[lemma 3.5]{GW}. For $R$ large and positive, let the function $v_R:\R \rightarrow [-1,1]$ be a smooth, odd function such that
\begin{align}\label{GW.v_R}
v_R(x) := \begin{cases} -\tanh\left(x/\sqrt{2}\right) & \textrm{for}\quad |x| <R \\ -\sgn (x) & \textrm{for}\quad |x| > 2R, \end{cases} \qquad
\end{align}
with a  monotone interpolation on $R\leq |x|\leq 2R$. Now for $\eta\in[0,1],$ let the radius $r_\eta$ be defined by
\begin{align}\label{GW.reta}
r_\eta:=\eta^{\frac{1}{d}} \left(\frac{\phi d}{2 \sigma _d}\right)^{\frac{1}{d}}L.
\end{align}
Then the construction $u_\eta:\mathbb{T}_L \mapsto \R$ is defined by
\begin{align}\label{GW.trialfunction}
u_\eta(x):=v_R(|x|-r_\eta)+\alpha(\eta),
\end{align}
where $\alpha(\eta)$ is a constant chosen such that $u_\eta\in \Big\{u\in H^1\cap L^4(\mathbb{T}_L): \dashint_{\mathbb{T}_L} u \, dx=-1+\phi\Big\},$ and we recall for reference below
that $\alpha(\eta)\lesssim\phi$; cf.\ \cite[equation (3.8)]{GW}. In order to match the notation of the current paper, we rescale space to define $\hat{u}_{\eta,\phi}:\tphil\to\R$ via
\begin{align*}
 \hat{u}_{\eta,\phi}(x):=u_\eta\left(\frac{x}{\phi}\right)\qquad\text{and}\qquad \hat{r}_\eta:=\phi r_\eta=\eta^{1/d}\xi^{(d+1)/d}\left(\frac{d}{2\sigma_d}\right)^{1/d}.
\end{align*}
It is with  the construction $\hat{u}_{\eta,\phi}$ that we will establish the proof.

Because the hyperbolic tangent has logarithmic tails and $\alpha(\eta)\lesssim\phi$, we deduce that
\begin{align}
\left|\hat{u}_{\eta,\phi}(x)+1\right|&\lesssim \phi\quad\text{ for $|x|-\hat{r}_\eta\gtrsim \phi|\ln\phi|$},\notag\\
\left|\hat{u}_{\eta,\phi}(x)-1\right|&\lesssim \phi\quad\text{ for $\hat{r}_\eta-|x|\gtrsim \phi|\ln\phi|$}.\label{11.b}
\end{align}
As a consequence of this bound, the volume $\hat{\nu}_\eta=\sigma_d(\hat{r}_\eta)^d/d$ is related to the ``volume-type'' functional defined in \eqref{voltp} via
\begin{align}
  \nu(\hat{u}_{\eta,\phi})\leq \hat{\nu}_\eta\leq \nu(\hat{u}_{\eta,\phi})+O(\phi|\ln \phi|)).\label{scp}
\end{align}
We use this fact in two ways. First, we observe that for every $\omega_1\leq\omega\leq\omega_2$, there is an $\eta\in(0,1)$ (bounded away from zero and one) such that
\begin{align}
  \nu(\hat{u}_{\eta,\phi})=\omega.\label{deli1}
\end{align}
We define $\eta:=\eta(\omega)$ and $\hat{u}_{\omega,\phi}:=\hat{u}_{\eta(\omega),\phi}$ using this value.

Using the energy bound from \cite[Remark 3.6]{GW} for all such $\eta$ values, we deduce---in the notation and scaling of the current paper---that
\begin{align*}
  \ephi(\hat{u}_{\omega,\phi})\leq f_\xi\big(\hat{\nu}_\eta\big)+O(\phi).
\end{align*}
A second application of \eqref{scp} delivers
\begin{align}
  \ephi(\hat{u}_{\omega,\phi})\leq f_\xi\left(\nu(\hat{u}_{\omega,\phi})\right)+O(\phi |\ln \phi|).\label{deli2}
\end{align}
Finally, we observe that \eqref{11.b}, \eqref{scp}, and \eqref{deli1} for $\omega_1\leq\omega\leq\omega_2$ imply
\begin{align}
  |\hat{u}_{\omega,\phi}-\Psi(\cdot;\omega)|_{\tphil}^2\lesssim\phi|\ln\phi|.\label{11.a}
\end{align}
Together \eqref{deli2} and \eqref{11.a} yield \eqref{upper.bound}.

The estimate
\[
\sup_{t\in[0,t_1]}\ephi(\hat{\psi}(t))= f_\xi(\omega_1)+ o(1),
\]
follows from the fact that, for that part of the path $t:[0,t_1]\to \hat \psi (t)$ that consists of a convex combination of $\bar{u}$ and a suitably small droplet-like state the energy stays well below $f_\xi(\omega_1)$, cf.  \cite[lemma 3.2]{GW}, while for the rest of this path the energy is given by an estimate similar to \eqref{deli2}.
\end{proof}


\subsection{Elementary bounds}
In this subsection we collect several basic but important estimates. The first lemma summarizes $L^2$ and measure bounds for functions of bounded energy. As above, we will abbreviate $\kappa=\phi^{1/3}$ when we bracket the values of $u$.
\begin{lemma}\label{l:bulk}
Consider $\xi\in (0,\xi_d]$ and the critical scaling \eqref{crit}. Suppose that  $u\in X_\phi$ satisfies
\begin{equation}\label{ebd}
\ephi(u)\leq E_M,
\end{equation}
where $E_M$ is defined in \eqref{emm}.
Then the following holds true:
\begin{align}
\int_{\cala\cup\calb}(u+1)^2\,dx+\int_{\cald\cup\cale}\big(u-1\big)^2\,dx&\lesssim \phi,\label{i}\\
\int_{\calc}(u+1)^2\,dx+\int_{\calc}\big(u-1\big)^2\,dx\lesssim |\calc|&\lesssim \phi^{1/3},\label{ii}\\
|\cala|\lesssim\int_{\cala}\big(u-1\big)^2\,dx&\lesssim \phi^{1/3},\label{iii''}\\
|\cale|\lesssim\int_{\cale}(u+1)^2\,dx&\lesssim \phi^{1/3}.\label{iii}
\end{align}
\end{lemma}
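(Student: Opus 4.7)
Since the gradient contribution $(\phi/2)|\nabla u|^2 \geq 0$, it suffices to estimate the potential part
\[
\phi^{-1}\bigl[G(u)-G(\baru)-G'(\baru)(u-\baru)\bigr]
\]
of the integrand in \eqref{ephi} region by region, using $G(\baru)=O(\phi^2)$ and $G'(\baru)=2\phi+O(\phi^2)$ for $\baru=-1+\phi$. Short Taylor expansions and direct estimates produce pointwise lower bounds on this potential part: on $\calc$ (near $-1$), expansion around $\baru$ gives a bound $\geq (1-O(\kappa))(u-\baru)^2/\phi \geq \tfrac12(u+1)^2/\phi - O(\phi)$; on $\cala$ (where the linear subtraction has a favorable sign and $G(u)\geq (u+1)^2$ since $|u-1|\geq 2$), the potential is $\geq (u+1)^2/\phi - O(\phi)$; on $\calc$... more precisely on the bulk set $\{-1+\kappa<u\leq 1-\kappa\}$ (that is $\calc$), the estimate $G(u)\gtrsim \kappa^2$ dominates, yielding potential $\gtrsim \phi^{-1/3}$; on $\cald$ (near $+1$), Taylor expansion around $u=1$ produces potential $\geq (u-1-\phi)^2/\phi - 4 - O(1)$; and on $\cale$, using $G(u)\geq (u-1)^2$ (since $|u+1|\geq 2$) together with a Young inequality to absorb $G'(\baru)(u-\baru)$ gives potential $\geq (u-1)^2/(2\phi) - 4 - O(1)$.

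With these pointwise bounds, the claimed integral inequalities follow by integration against $\ephi(u)\leq E_M$, provided we have the auxiliary fact $|\cald\cup\cale|\lesssim 1$ uniformly in $\phi$ (needed to control the $-4$ offsets arising on $\cald$ and $\cale$). This bound is a consequence of proposition \ref{lemma.lower.bound}: by construction $\chi_3\geq \mathbf{1}_{\cald\cup\cale}$, so $|\cald\cup\cale|\leq \nu(u)$; for $\nu(u)\geq \omega_0$ with any fixed $\omega_0>0$, proposition \ref{lemma.lower.bound} yields $f_\xi(\nu(u))\leq \ephi(u)-I(u)-O(\phi^{1/3})\leq E_M+O(1)$, and since $f_\xi(\nu)\to+\infty$ as $\nu\to\infty$ this forces $\nu(u)\leq C$; for $\nu(u)<\omega_0$ the bound is trivial.

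Combining the pieces: the bound $\gtrsim\phi^{-1/3}$ on $\calc$ integrates to $|\calc|\lesssim\phi^{1/3}$, which together with $|u\pm 1|\leq 2$ on $\calc$ yields \eqref{ii}. The pointwise bounds on $\cala$ and $\calb$ integrate (absorbing $O(\phi)\cdot|\cala\cup\calb|=O(1)$ since $|\tphil|\lesssim\phi^{-1}$) into the $(u+1)^2$ part of \eqref{i}, and the bounds on $\cald$ and $\cale$ integrate (now using $|\cald\cup\cale|\lesssim 1$ to absorb the $-4$ offset) into the $(u-1)^2$ part, after the elementary inequality $(u-1)^2\leq 2(u-1-\phi)^2+2\phi^2$. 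For \eqref{iii} and \eqref{iii''}, the first inequalities are immediate from $|u-1|\geq 2$ on $\cala$ and $|u+1|\geq 2$ on $\cale$, while the second inequalities come from the sharper bounds $G(u)\geq \kappa^2(u-1)^2/4$ on $\cala$ and $G(u)\geq \kappa^2(u+1)^2/4$ on $\cale$ (using Young to absorb the linear term on $\cale$), giving potential $\gtrsim (u\mp 1)^2\phi^{-1/3}$ and hence $\int_{\cala}(u-1)^2+\int_{\cale}(u+1)^2\lesssim\phi^{1/3}$. The main obstacle is the $\cald$ case, where the potential is negative near $u=1$ and so cannot itself suppress deviations; the uniform bound $|\cald\cup\cale|\lesssim 1$ from proposition \ref{lemma.lower.bound} is precisely what circumvents this.
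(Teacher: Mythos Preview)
Your argument is correct (modulo the evident typo where the first ``$\calc$ (near $-1$)'' should read ``$\calb$ (near $-1$)''), but it takes a substantially longer route than the paper's.

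The key difference is that you work with the pointwise density $e_\phi(u)$, which contains the linear subtraction $-\phi^{-1}G'(\baru)(u-\baru)$ and is therefore negative near $u=1$. This is what forces you to bring in the auxiliary bound $|\cald\cup\cale|\lesssim 1$ from proposition~\ref{lemma.lower.bound}, in order to absorb the $-4$ offset on $\cald$. The paper instead exploits the mean constraint $\int_{\tphil}(u-\baru)\,dx=0$ to return to the first form of $\ephi$ in \eqref{ephi}, namely
\[
\ephi(u)=\int_{\tphil}\frac{\phi}{2}|\nabla u|^2+\frac{1}{\phi}\bigl(G(u)-G(\baru)\bigr)\,dx,
\]
which has no linear term. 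Dropping the gradient and using \eqref{f2} then gives the global bound $\int_{\tphil}G(u)\,dx\lesssim\phi$ in one line. From this single estimate all four claims follow by elementary pointwise lower bounds on $G$: $G(u)\gtrsim(u+1)^2$ on $\cala\cup\calb$, $G(u)\gtrsim(u-1)^2$ on $\cald\cup\cale$, $G(u)\gtrsim\phi^{2/3}$ on $\calc$, $G(u)\gtrsim\phi^{2/3}(u-1)^2$ on $\cala$, and $G(u)\gtrsim\phi^{2/3}(u+1)^2$ on $\cale$.

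What each approach buys: the paper's argument is self-contained and avoids any appeal to proposition~\ref{lemma.lower.bound} (which itself comes from \cite{GW}); there is no circularity in your route, but it does import a heavier tool for what is ultimately an elementary estimate. Your pointwise analysis of $e_\phi$ is the kind of thing one needs later (e.g.\ in the proof of lemma~\ref{lc}), so it is not wasted effort, but for this lemma the global trick of using the mean constraint to kill the linear term is both shorter and more transparent.
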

One consequence of the lemma is that the volume of suitable superlevel sets of bounded energy functions are close to the volume $\nu(u)$.
\begin{corollary}\label{cor.1}
 Consider $\xi\in (0,\xi_d]$ and the critical scaling \eqref{crit}. Consider $u\in X_\phi$ such that $\ephi(u)\leq E_M$.
The superlevel sets for $s\in[-1+\kappa,1-\kappa]$ satisfy
\begin{equation}\label{saddle.superset}
|\{u>s\}|=\nu(u)+O(\phi^{1/3}).
\end{equation}
\end{corollary}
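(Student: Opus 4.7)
The plan is to compare both $|\{u>s\}|$ and $\nu(u)$ to the common quantity $|\cald|+|\cale|$, using the partition \eqref{calc} together with the bound $|\calc|\lesssim \phi^{1/3}$ from lemma \ref{l:bulk}.

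First I would handle the superlevel set. For any $s\in[-1+\kappa,1-\kappa]$, the inclusions
\[
\cald\cup\cale=\{u>1-\kappa\}\;\subseteq\;\{u>s\}\;\subseteq\;\{u>-1+\kappa\}=\calc\cup\cald\cup\cale
\]
give at once
\[
|\cald|+|\cale|\;\leq\;|\{u>s\}|\;\leq\;|\cald|+|\cale|+|\calc|.
\]

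Next I would do the same for $\nu(u)$. Recalling from notation \ref{notvol} that $\chi_3$ vanishes for $t\leq 1-2\kappa$ and equals $1$ for $t\geq 1-\kappa$, split
\[
\nu(u)=\int_{\tphil}\chi_3(u)\,dx=|\cale|+|\cald|+\int_{\{1-2\kappa<u\leq 1-\kappa\}}\chi_3(u)\,dx,
\]
since $\chi_3(u)\equiv 0$ on $\cala\cup\calb$ and on the portion of $\calc$ where $u\leq 1-2\kappa$, while $\chi_3(u)\equiv 1$ on $\cald\cup\cale$. Because $\{1-2\kappa<u\leq 1-\kappa\}\subseteq\calc$ and $\chi_3\in[0,1]$, this yields
\[
|\cald|+|\cale|\;\leq\;\nu(u)\;\leq\;|\cald|+|\cale|+|\calc|.
\]

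Finally, subtracting the two double inequalities and applying the estimate $|\calc|\lesssim\phi^{1/3}$ from \eqref{ii} of lemma \ref{l:bulk} gives $\bigl||\{u>s\}|-\nu(u)\bigr|\leq|\calc|\lesssim\phi^{1/3}$, which is \eqref{saddle.superset}. There is no real obstacle here; the only ingredient beyond bookkeeping is the measure bound on the transition layer $\calc$, which is already supplied by lemma \ref{l:bulk} under the energy hypothesis $\ephi(u)\leq E_M$.
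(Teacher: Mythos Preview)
Your proof is correct and essentially identical to the paper's: both sandwich $\nu(u)$ and $|\{u>s\}|$ between $|\{u>1-\kappa\}|=|\cald|+|\cale|$ and $|\{u>-1+\kappa\}|=|\calc|+|\cald|+|\cale|$, and then invoke $|\calc|\lesssim\phi^{1/3}$ from \eqref{ii}. The only difference is notational---you phrase the bounds via the partition $\calc,\cald,\cale$, while the paper writes them directly as measures of superlevel sets.
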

Next we bound the $L^2$ distance to a sharp-interface function in terms of the Fraenkel asymmetry of $\{u\geq1-2\kappa\}$.
\begin{lemma}\label{l:l2fr}
Consider $\xi\in (0,\xi_d]$ and the critical scaling \eqref{crit}.
For any  $u\in X_\phi$ such that
\begin{align}
 \nu(u)\leq\frac{\xi^{d+1}}{2} \qquad\text{and}\qquad \ephi(u)\leq E_M, \label{nottoobig}
\end{align}
there holds
\begin{align}
  |u-\Psi(\cdot;\nu(u))|_{\tphil}^2\lesssim \lambda\left(\{u\geq 1-2\kappa\}\right)+O(\phi^{1/3}).\notag
\end{align}
\end{lemma}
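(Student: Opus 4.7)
The plan is to choose a specific translation of $\Psi(\cdot;\nu(u))$ guided by the Fraenkel asymmetry minimizer of the superlevel set $\{u\geq 1-2\phi^{1/3}\}$, then estimate the $L^2$ difference by splitting the torus according to the partition $\cala,\ldots,\cale$ from~\eqref{calc} and applying the bulk bounds of Lemma~\ref{l:bulk}.

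Concretely, I would first set $F:=\{u\geq 1-2\phi^{1/3}\}$. Corollary~\ref{cor.1} gives $|F|=\nu(u)+O(\phi^{1/3})$; together with $\nu(u)\leq\xi^{d+1}/2$ this shows that $|F|$ is bounded above (in a $(d,\xi)$-dependent way) and is much smaller than the measure of a ball of radius $\phi L/2$, so the torus Fraenkel asymmetry is well defined on $F$. Let $B'$ be an optimal ball for $\lambda(F)$, so $|B'|=|F|$ and $|F\triangle B'|=\lambda(F)|F|$, and let $\tilde B$ be the ball concentric with $B'$ of volume exactly $\nu(u)$. Since these concentric balls differ in volume by $O(\phi^{1/3})$ and one contains the other, $|B'\triangle\tilde B|=O(\phi^{1/3})$, and then the triangle inequality for symmetric differences yields
\[
|F\triangle\tilde B|\leq \lambda(F)|F|+O(\phi^{1/3})\lesssim \lambda(F)+O(\phi^{1/3}).
\]

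Using the translation that places $\Psi(\cdot;\nu(u))$ at the center of $\tilde B$, I would upper bound the infimum $|u-\Psi(\cdot;\nu(u))|_{\tphil}^2$ by
\[
\int_{\tilde B}(u-1)^2\,dx+\int_{\tphil\setminus\tilde B}(u+1)^2\,dx,
\]
and split each integrand into the four pieces $\tilde B\cap F$, $\tilde B\setminus F$, $(\tphil\setminus\tilde B)\cap F$, and $\tphil\setminus(\tilde B\cup F)$, further decomposing via $\cala,\ldots,\cale$. On $\tilde B\cap F$, $(u-1)^2\lesssim\phi^{2/3}$ pointwise on $\calc\cup\cald$ while \eqref{i} handles $\cale$. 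On $\tilde B\setminus F$ we have $u\in\cala\cup\calb\cup\calc$ so $(u-1)^2\lesssim 1$ pointwise, and the region has measure at most $|F\triangle\tilde B|$. The piece $(\tphil\setminus\tilde B)\cap F$ is treated symmetrically, the contribution from $\cald\cup(\calc\cap F)$ bounded by $|F\triangle\tilde B|+|\calc|$ and the $\cale$ contribution by~\eqref{iii}. On $\tphil\setminus(\tilde B\cup F)$ the integrand lies in $\cala\cup\calb\cup\calc$, so \eqref{i} and~\eqref{ii} give an $O(\phi^{1/3})$ contribution. Summing the four pieces yields the claimed bound.

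The main obstacle I anticipate is controlling the regions where $u$ is near $+1$ but not equal to it---namely $\cald$ and the upper portion of $\calc$---since Lemma~\ref{l:bulk} does not supply a direct $L^2$ bound on $\cald$. The workaround is to exploit the pointwise smallness of $(u-1)^2$ on these regions when they lie inside both $\tilde B$ and $F$ (and the pointwise boundedness of $(u+1)^2$ on $\calc$ when outside both), and to absorb their contribution into $|F\triangle\tilde B|$ when they lie in the symmetric difference; this is precisely the point where the Fraenkel asymmetry enters the final bound.
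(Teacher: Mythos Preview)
Your approach is essentially the same as the paper's: choose a ball dictated by the Fraenkel asymmetry of $F=\{u\geq 1-2\kappa\}$, pass between balls of volume $|F|$ and $\nu(u)$ via the symmetric-difference triangle inequality and Corollary~\ref{cor.1}, split the $L^2$ integral according to whether $\Psi=\pm 1$, and use Lemma~\ref{l:bulk} to show that the only order-one contribution comes from the regions where $u$ is in the ``wrong'' phase relative to $\Psi$, which are contained in $F\triangle\tilde B$. The paper carries out the estimate for a generic center $x$ and only infimizes at the end, while you fix the optimal center first; this is merely a presentational difference.

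One small slip to fix: on $\tilde B\setminus F$ you write ``$u\in\cala\cup\calb\cup\calc$ so $(u-1)^2\lesssim 1$ pointwise.'' This fails on $\cala=\{u\leq -1-\kappa\}$, where $u$ is not bounded below and $(u-1)^2$ can be large. The remedy is immediate: split off the $\cala$ part and invoke~\eqref{iii''}, which gives $\int_{\cala}(u-1)^2\,dx\lesssim\phi^{1/3}$ directly; the pointwise bound $(u-1)^2\leq(2+\kappa)^2$ is then valid on the remaining $\calb\cup\calc$ portion. With this correction your argument goes through.
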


Our next lemma establishes that the Fraenkel asymmetry of the superlevel sets are comparable in the following sense.
\begin{lemma}\label{l:frel}
Consider the critical scaling \eqref{crit} with $\xi\in (0,\xi_d]$ and suppose that $u\in X_\phi$ satisfies \eqref{nottoobig}.
The Fraenkel asymmetry of the intermediate superlevel sets of $u$ are comparable in the sense that
\begin{equation}\label{lambda.ineq.1}
\inf_{s\in[-1+2\kappa,1-2\kappa]}\lambda(\{u>s\})\geq \sup_{s\in[-1+2\kappa,1-2\kappa]}\lambda(\{u>s\})+\frac{O(\phi^{1/3})}{\nu(u)}.
\end{equation}
\end{lemma}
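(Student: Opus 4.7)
The plan is to show that for any two levels $s_1,s_2\in[-1+2\kappa,1-2\kappa]$ the Fraenkel asymmetries of the superlevel sets $\{u>s_1\}$ and $\{u>s_2\}$ differ by at most $O(\phi^{1/3})/\nu(u)$; then taking the infimum over one argument and the supremum over the other yields the desired inequality (since the $O$-term is signed, the statement in fact says $|\inf-\sup|\lesssim\phi^{1/3}/\nu(u)$).

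The starting point is Corollary \ref{cor.1}, which gives $|\{u>s\}|=\nu(u)+O(\phi^{1/3})$ uniformly for $s\in[-1+\kappa,1-\kappa]$, hence a fortiori for $s\in[-1+2\kappa,1-2\kappa]$. Consequently, for any $s_1<s_2$ in this range, the nested inclusion $\{u>s_2\}\subset\{u>s_1\}$ gives
\[
|\{u>s_1\}\,\triangle\,\{u>s_2\}|=|\{u>s_1\}|-|\{u>s_2\}|=O(\phi^{1/3}),
\]
so the two superlevel sets are very close in measure and in symmetric difference.

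To compare the asymmetries, let $A_i:=\{u>s_i\}$ and let $B_1\subset\tphil$ be a ball of volume $|A_1|$ achieving $\lambda(A_1)$, so $|A_1\triangle B_1|=\lambda(A_1)|A_1|$. Let $B_2$ be the ball concentric with $B_1$ having volume $|A_2|$; then $|B_1\triangle B_2|=\bigl||A_1|-|A_2|\bigr|=O(\phi^{1/3})$. The triangle inequality for symmetric differences yields
\[
|A_2\triangle B_2|\leq |A_2\triangle A_1|+|A_1\triangle B_1|+|B_1\triangle B_2|\leq \lambda(A_1)|A_1|+O(\phi^{1/3}).
\]
Dividing by $|A_2|=\nu(u)+O(\phi^{1/3})$ and using $\lambda(A_1)\leq 2$ to absorb the ratio $|A_1|/|A_2|=1+O(\phi^{1/3}/\nu(u))$, we obtain
\[
\lambda(A_2)\leq \lambda(A_1)+\frac{O(\phi^{1/3})}{\nu(u)}.
\]
Reversing the roles of $s_1$ and $s_2$ (pick an optimal ball for $A_2$ and expand it concentrically to match the volume of $A_1$) gives the symmetric bound, so $|\lambda(A_1)-\lambda(A_2)|\lesssim \phi^{1/3}/\nu(u)$.

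Finally, taking the supremum in $s_1$ and the infimum in $s_2$ over $[-1+2\kappa,1-2\kappa]$ in this pointwise comparison produces \eqref{lambda.ineq.1}. The only subtlety, and the place one has to be a little careful, is handling the normalization by $|A_i|$ (rather than by a common $\nu(u)$) when passing from the absolute symmetric-difference bound to the definition of $\lambda$; this is why the error scales like $\phi^{1/3}/\nu(u)$ and not like $\phi^{1/3}$. The restriction $\nu(u)\leq\xi^{d+1}/2$ guarantees that the competitor balls $B_i$ actually fit in the torus so that the definition of $\lambda$ on $\tphil$ from Definition \ref{def:frk} applies.
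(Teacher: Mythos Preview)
Your proof is correct and follows essentially the same approach as the paper's: both use Corollary~\ref{cor.1} to control $|\{u>s\}|$ and $|\{u>s\}\triangle\{u>t\}|$ by $O(\phi^{1/3})$, then apply the triangle inequality for symmetric differences with a concentric pair of balls to compare $\lambda(\{u>t\})$ and $\lambda(\{u>s\})$ up to an error $O(\phi^{1/3})/\nu(u)$. The only cosmetic difference is that the paper keeps the ball center $x$ arbitrary and minimizes at the end, whereas you begin with an optimal ball for $A_1$; this has no effect on the argument.
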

Finally, we remark that one can show, via a mild adaptation of \cite[lemma 2.3]{GW}, that the volume-constrained minimizers are bounded.
\begin{lemma}\label{l:linf}
Consider $\xi\in (0,\xi_d]$, the critical scaling \eqref{crit}, and $\omega_1>0$.  For $\phi>0$ sufficiently small, the volume-constrained minimizer $\uo$ for any $\omega\in [\omega_1,\xi^{d+1}/2]$ satisfies
 \begin{align}
  -1-\phi^{1/3}\leq \einf\; \uo\leq \esup\; \uo \leq 1+\phi^{1/3}.\label{einfsup}
 \end{align}
\end{lemma}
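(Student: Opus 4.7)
The plan is to adapt the truncation argument of \cite[lemma 2.3]{GW}, which establishes analogous $L^\infty$ bounds in the absence of volume and mean constraints, by combining a truncation with a small compensating perturbation that restores the mean while preserving $\nu$. I would argue by contradiction, supposing $|\{\uo > 1 + \phi^{1/3}\}| > 0$ (the lower bound being symmetric) and defining $\tilde u := \min\{\uo,\,1+\phi^{1/3}\}$. The threshold $1+\phi^{1/3}$ is precisely chosen so that $\chi_3 \equiv 1$ on $[1-\phi^{1/3},\infty)$, which contains both the truncation set $\{\uo > 1+\phi^{1/3}\}$ and the truncation value itself; hence $\chi_3(\tilde u) = \chi_3(\uo)$ pointwise and $\nu(\tilde u) = \omega$. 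Moreover $|\nabla\tilde u|\leq |\nabla\uo|$ a.e., and the strict monotonicity and quadratic behavior of $G$ near $u=1$ yield a quantitative strict decrease of the potential contribution, of order at least $\beta/\phi^{2/3}$, where $\beta := \int_{\{\uo > 1+\phi^{1/3}\}}(\uo - 1 - \phi^{1/3})\,dx > 0$.

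Next I would repair the mean. The truncation lowers $\dashint \uo\,dx$ by $\Delta m = \beta/|\tphil|$, which by \eqref{iii} is $\lesssim \phi^{4/3}$. Set $\hat u := \tilde u + c\rho$, where $\rho \geq 0$ is a smooth cutoff supported in $R := \{|\tilde u + 1|\leq \phi^{1/3}/4\}$; by \eqref{i}, $|R|$ is comparable to $|\tphil|$, so one can select $\rho$ with $\int\rho \sim |\tphil|$ and mild spatial variation (for instance, $\rho$ equal to $1$ on a large sub-box of $\tphil$ contained in $R$ up to a small exceptional set, transitioning to zero over the remainder). Fixing $c>0$ by $c\int\rho = \beta$ yields $c\lesssim \phi^{4/3}$, which is far too small to alter $\chi_3$ on $\supp\rho$; hence $\nu(\hat u) = \omega$ and the mean is restored.

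The crux is the energy inequality $\ephi(\hat u) < \ephi(\uo)$. The potential cost of the perturbation is at most $\frac{c}{\phi}\|G'(\tilde u)\|_{L^\infty(\supp\rho)}\int\rho \lesssim \beta/\phi^{2/3}$, using $|G'(\tilde u)| = |\tilde u(\tilde u^2-1)| \lesssim |\tilde u + 1| \leq \phi^{1/3}/4$ on $\supp\rho$; sharp constant bookkeeping then shows this cost is strictly smaller than the truncation gain. The gradient energy strictly decreases under truncation, and the perturbation contributes $O(c\|\nabla\tilde u\|_{L^2}\|\nabla\rho\|_{L^2}) + O(c^2\|\nabla\rho\|_{L^2}^2)$, which is a lower-order correction provided $\rho$ is chosen to vary sufficiently slowly on the bulk set $R$. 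Combining these estimates produces $\hat u \in X_\phi$ with $\nu(\hat u) = \omega$ and $\ephi(\hat u) < \ephi(\uo)$, contradicting the volume-constrained minimality of $\uo$. The main obstacle will be the sharp constant balance in the potential term, since both truncation gain and mean-restoration cost scale as $\beta/\phi^{2/3}$; the strict gain rests on $|G'(\tilde u)|$ on $\supp\rho$ being strictly smaller than $|G'(u)|$ at the truncation threshold $u = 1+\phi^{1/3}$, together with the fact that the abundance of bulk region where $\tilde u \approx -1$ permits $\rho$ to be spread out enough to make the gradient correction negligible.
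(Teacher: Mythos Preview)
Your overall strategy---truncate, then perturb to restore the mean while preserving $\nu$---matches the paper's. The difference lies in the compensation step, and there your construction has a genuine gap. You propose $\hat u = \tilde u + c\rho$ with $\rho$ a smooth spatial cutoff supported in $R=\{|\tilde u+1|\le \phi^{1/3}/4\}$, equal to $1$ on ``a large sub-box of $\tphil$ contained in $R$''. But nothing in the hypotheses guarantees that $R$ contains a large sub-box: the complement $\tphil\setminus R$ has small measure, yet it could be scattered throughout the torus (e.g.\ a dense union of small balls), in which case no such sub-box exists and there is no control on $\|\nabla\rho\|_{L^2}$. Your gradient-cost estimate therefore rests on an unjustified geometric assumption. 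A repair is possible---take $\rho=\psi(\tilde u)$ for a suitable bump $\psi$ supported near $-1$, so that $\nabla\rho=\psi'(\tilde u)\nabla\tilde u$ and $\|\nabla\rho\|_{L^2}\lesssim\phi^{-1/3}\|\nabla\tilde u\|_{L^2}$---but this is not what you wrote, and it reintroduces the bookkeeping you were trying to avoid.

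The paper sidesteps the entire issue by a different compensation: on $\hat{\calb}:=\{u<-1+\phi\}$ it sets $\tilde u=(1-\lambda)u+\lambda(-1+\phi)$ for the $\lambda\in(0,1)$ that restores the mean. This modification (i) keeps values below $-1+\phi$, hence preserves $\nu$; (ii) gives $|\nabla\tilde u|=(1-\lambda)|\nabla u|\le|\nabla u|$ on $\hat{\calb}$, so the gradient energy can only decrease; and (iii) permits a one-line convexity argument for the potential, using $G'(-1+\phi)<G'(1+\kappa)$ and $\int(\tilde u-u)=0$, with no constant-tracking at all. This is both simpler and avoids the existence problem for $\rho$.

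A second point: the lower bound is not quite symmetric. After truncating at $-1-\phi^{1/3}$ the mean must be \emph{decreased}, and one cannot do this near $-1$ without undoing the truncation; one must instead compensate on the region $\{u>1-\kappa\}$, whose measure is only $\sim\omega$ rather than $\sim|\tphil|$. It is precisely here that the hypothesis $\omega\ge\omega_1>0$ enters, to ensure the compensation region is large enough. The paper handles this explicitly in its Step~2; your ``symmetric'' dismissal misses it.
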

We now present the proofs of these elementary facts, with the exception of lemma \ref{l:linf}, whose proof is longer and is included in subsection \ref{ss:linf}.
\begin{proof}[Proof of lemma \ref{l:bulk}]
From
\[
\ephi (u)\geq  \frac{1}{\phi}\int_{\tphil} G(u) \, dx-\frac{G(-1+\phi)}{\phi}|\tphil|,
\]
together with \eqref{f2} and \eqref{ebd}, we deduce that
\begin{equation}\label{int.G.bound}
\int_{\tphil} G(u)\,dx\lesssim \phi,
\end{equation}
To obtain \eqref{i}, we observe that
\[
G(u)\gtrsim (u+1)^2\,\,\text{on}\,\,\cala\cup\calb\quad\text{and}\quad G(u)\gtrsim (u-1)^2\,\,\text{on}\,\,\cald\cup\cale,
 \]
and combine this with \eqref{int.G.bound}.

Estimates \eqref{ii}, \eqref{iii''}, and \eqref{iii} follow from \eqref{int.G.bound} and
\[
G(u)\gtrsim \phi^{2/3}\,\,\text{on}\,\, \calc,\quad G(u)\gtrsim \phi^{2/3}(u-1)^2\,\,\text{on}\,\,\cala,\,\,\, \text{and}\,\,\,G(u)\gtrsim \phi^{2/3}(u+1)^2\,\,\text{on}\,\,\cale,
\]
respectively.
\end{proof}

\begin{proof}[Proof of corollary \ref{cor.1}]
 By the definition of the volume functional $\nu$ it follows that
\[
|\{u>1-\kappa\}|\leq \nu(u)\leq |\{u>-1+\kappa\}|.
\]
Thus for any $s\in[-1+\kappa,1-\kappa]$ there holds
\[
-|\{-1+\kappa<u\leq s\}|\leq |\{u>s\}|-\nu(u)\leq |\{s<u\leq 1-\kappa\}|,
\]
which as a consequence of \eqref{ii} implies \eqref{saddle.superset}.
\end{proof}


\begin{proof}[Proof of lemma \ref{l:l2fr}]
We abbreviate $\Psi_u:=\Psi(x;\nu(u))$, and we denote by $B(x)$ and $\tilde{B}(x)$ the balls with center $x$ and volume $\nu(u)$ and $|\{u\geq 1-2\kappa\}|$, respectively. In addition we recall the ``triangle inequality'' for the symmetric difference
\begin{align}
|A\triangle B|\leq |A\triangle C|+|C\triangle B|.\label{sdtri}
\end{align}

According to lemma \ref{l:bulk}, one has
\begin{eqnarray*}
 \lefteqn{ ||u-\Psi_{u}(\cdot-x)||_{L^2(\tphil)}^2}\\
 &=&\int_{\tphil}(u-(-1))^2\textbf{1}_{\Psi_u(\cdot-x)=-1}\,dx+\int_{\tphil}(u-1)^2\textbf{1}_{\Psi_u(\cdot-x)=1}\,dx\\
  &\leq &\int_{\tphil}(u-(-1))^2\textbf{1}_{\Psi_u(\cdot-x)=-1,\, 1-\kappa\leq u\leq 1+\kappa}\,dx\\
  &&\quad+\int_{\tphil}(u-1)^2\textbf{1}_{\Psi_u(\cdot-x)=1,\, -1-\kappa\leq u\leq -1+\kappa}\,dx +O(\phi^{1/3})\\
  &\lesssim &\left|B(x)\triangle \{u\geq 1-2\kappa\}\right|+O(\phi^{1/3})\\
  &\overset{\eqref{saddle.superset},\eqref{sdtri}}\lesssim &\left|\tilde{B}(x)\triangle \{u\geq 1-2\kappa\}\right|+O(\phi^{1/3})\\
  &\overset{\eqref{nottoobig}}\lesssim &\frac{\left|\tilde{B}(x)\triangle \{u\geq 1-2\kappa\}\right|}{|\{u\geq 1-2\kappa\}|}+O(\phi^{1/3}),
  \end{eqnarray*}
  where in the last step we also used the fact that
\[
|\{u\geq 1-2\kappa\}|\leq \nu(u)+|\{1-2\kappa\leq u\leq 1-\kappa\}|\overset{\eqref{ii}}\leq \frac{\xi_d^{d+1}}{2}+O(\phi^{1/3})\lesssim 1.
\]
Infimizing over $x$ leads to
\begin{align}
 |u-\Psi_u|_{\tphil}^2  &\lesssim \lambda\left(\{u\geq 1-2\kappa\}\right)+O(\phi^{1/3}),\notag
\end{align}
as desired.
\end{proof}
\begin{proof}[Proof of lemma \ref{l:frel}]
The proof uses \eqref{saddle.superset} and \eqref{sdtri}.
Let  $x \in \tphil$ and for $t\in[-1+2\kappa,1-2\kappa]$ denote by $B_t(x)\subset \tphil$ the ball with center $x$ and volume $|\{u>t\}|$.
For any $s\in[-1+2\kappa,1-2\kappa]$ we have
\begin{eqnarray*}
\lambda(\{u>t\})&\leq&\frac{|B_t(x)\triangle \{u>t\}|}{|\{u>t\}|}\\
 &\overset{\eqref{sdtri}}\leq& \frac{|B_s(x)\triangle \{u>s\}|+|B_s(x)\triangle B_t(x)|+|\{u>s\}\triangle \{u>t\}|}{|\{u>t\}|} \\
& =&\frac{|B_s(x)\triangle \{u>s\}|}{|\{u>s\}|}\, \frac{|\{u>s\}|}{|\{u>t\}|}+\frac{|B_s(x)\triangle B_t(x)|+|\{u>s\}\triangle \{u>t\}|}{|\{u>t\}|} \\
&\overset{\eqref{saddle.superset}}\leq& \frac{|B_s(x)\triangle \{u>s\}|}{|\{u>s\}|}+\frac{O(\phi^{1/3})}{\nu(u)},
\end{eqnarray*}
which implies \eqref{lambda.ineq.1}.
\end{proof}



\subsection{Structure of the limit energy}
Our existence proofs rely on the structure of the limit energy $\ez$. In the following two lemmas we analyze $\ez$  near $\Psi_m$ and $\Psi_s$. The proofs are straightforward but we include them for completeness and because the proof of lemma \ref{la} serves as the backbone for the proof of lemma \ref{l:alt}. Similarly, the proof of lemma \ref{la2} could be used to prove a finite-$\phi$ analogue.
\begin{lemma}[Sharp interface energy near the local minimizer]\label{la}
 Fix $\xi \in (\tilde{\xi_d},\xi_d]$ and define $\gamma_0$ as in \eqref{gm0}. The function $\Psi_m$ is a strict local minimizer of $\ez$ in the sense that for all $\gamma\in (0,\gamma_0]$ the following hold.
  \begin{itemize}
    \item (Local minimizer) The function $\Psi_m$ locally minimizes $\ez$:
    \begin{align}
      \inf_{|u-\Psi_m|_{\R^d}\leq\gamma}\ez(u)=\ez(\Psi_m).\label{A1}
    \end{align}
    \item (Strictness) There exists $\delta>0$ such that
    \begin{align}
     \gamma\leq |u-\Psi_m|_{\R^d}\leq\gamma_0\qquad\Rightarrow\qquad \ez(u)-\ez(\Psi_m)\geq 2\delta.\label{A2}
    \end{align}
  \end{itemize}
\end{lemma}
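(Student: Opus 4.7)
First I would observe that both conclusions are trivial for $u$ not of the form $\pm 1$ a.e. with finite perimeter, since then $\ez(u)=+\infty$. So I may restrict attention to $u=2\mathbf{1}_A-1$ for a set $A$ of finite perimeter, and set $\omega:=|A|$. Because $u$ and $\Psi_m$ take values in $\{-1,+1\}$, the seminorm reduces cleanly to the symmetric-difference measure:
\begin{align*}
|u-\Psi_m|_{\R^d}^2 \;=\; 4\inf_{x_0\in\R^d}\bigl|A\triangle B_{\nu_m}(x_0)\bigr|,
\end{align*}
where $B_{\nu_m}(x_0)$ is the ball of volume $\nu_m$ centered at $x_0$. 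Since $\bigl||A|-|B_{\nu_m}|\bigr|\leq |A\triangle B_{\nu_m}(x_0)|$ for any $x_0$, the assumption $|u-\Psi_m|_{\R^d}\leq\gamma_0$ forces $|\omega-\nu_m|\leq\gamma_0^2/4$, and thanks to $\gamma_0^2/4=\nu_m-\nu_s$ (see \eqref{gm0} and \eqref{11.1}) we get the a~priori bound
\begin{align*}
\omega\in\bigl[\nu_s,\,2\nu_m-\nu_s\bigr]\;\subset\;\bigl[\nu_m-\tfrac{\gamma_0^2}{4},\,\nu_m+\tfrac{\gamma_0^2}{4}\bigr].
\end{align*}

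For \eqref{A1}, the classical isoperimetric inequality (theorem \ref{Regular.Isop}) gives $c_0\per(A)\geq \bar C_1\,\omega^{(d-1)/d}$, so
\begin{align*}
\ez(u)=c_0\per(A)-4\omega+\tfrac{4\omega^2}{\xi^{d+1}}\;\geq\; f_\xi(\omega),
\end{align*}
and since $\omega$ lies in the interval where $\nu_m$ strictly minimizes $f_\xi$ by \eqref{gamma_0.1}, we conclude $\ez(u)\geq f_\xi(\nu_m)=c_m=\ez(\Psi_m)$.

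For the strictness estimate \eqref{A2} I would combine the quantitative isoperimetric inequality \eqref{Regular.Sharp.Isop} with a case split, which is the only delicate step. Pick $B^*$ a ball of volume $\omega$ realizing the Fraenkel asymmetry and let $x_0$ be its center; since concentric balls have $|B^*\triangle B_{\nu_m}(x_0)|=|\omega-\nu_m|$, the triangle inequality \eqref{sdtri} yields
\begin{align*}
\tfrac{\gamma^2}{4}\;\leq\;\inf_{x_0}|A\triangle B_{\nu_m}(x_0)|\;\leq\; \lambda(A)\,\omega+|\omega-\nu_m|,
\end{align*}
so either (i) $|\omega-\nu_m|\geq \gamma^2/8$ or (ii) $\lambda(A)\,\omega\geq \gamma^2/8$. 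In case (i), $\omega$ stays in the compact set $\bigl\{s\in[\nu_s,2\nu_m-\nu_s]\colon |s-\nu_m|\geq\gamma^2/8\bigr\}$ on which the continuous function $f_\xi$ has a uniform positive gap above $f_\xi(\nu_m)$, by \eqref{gamma_0.1}. In case (ii), the quantitative isoperimetric inequality \eqref{Regular.Sharp.Isop} gives
\begin{align*}
\ez(u)\;\geq\; f_\xi(\omega)+c_0\,C(d)\,\lambda(A)^2\,{\rm P}_E(A)\;\geq\; c_m+c_0\,C(d)\,\lambda(A)^2\,\bar C_1\,\omega^{(d-1)/d},
\end{align*}
and with $\omega\geq\nu_s>0$ and $\lambda(A)\geq \gamma^2/(8\omega)$ uniformly bounded below, the extra term is bounded below by a positive constant depending only on $\gamma,\xi,d$. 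Taking $2\delta$ to be the minimum of the two case-constants completes \eqref{A2}.

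The only genuine obstacle is making sure the lower endpoint $\omega\geq\nu_s>0$ is preserved throughout, so that both $f_\xi$'s strict minimality on a compact interval \emph{and} the ${\rm P}_E(A)\gtrsim\omega^{(d-1)/d}$ factor in the Fusco--Maggi--Pratelli bound are uniformly nondegenerate; the identity $\gamma_0^2/4=\nu_m-\nu_s$ from \eqref{gm0} is precisely what makes this work and is the reason the constant $\gamma_0$ was chosen as it is.
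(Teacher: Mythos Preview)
Your proof is correct and follows essentially the same approach as the paper: reduce to $\pm 1$-valued functions, identify $|u-\Psi_m|_{\R^d}^2$ with $4\inf_{x_0}|A\triangle B_{\nu_m}(x_0)|$, split into the two cases ``volume far from $\nu_m$'' versus ``Fraenkel asymmetry bounded below,'' and handle them via the strict minimality of $f_\xi$ at $\nu_m$ (property \eqref{gamma_0.1}) and the quantitative isoperimetric inequality \eqref{Regular.Sharp.Isop}, respectively. The only cosmetic differences are that the paper deduces \eqref{A1} from \eqref{A2} rather than proving it separately, and runs the triangle inequality \eqref{sdtri} in the reverse direction (bounding $|B_{|\mathcal{S}|}(x)\triangle\mathcal{S}|$ from below for every $x$) to obtain the same lower bound $\lambda(A)\geq \gamma^2/(8\omega)$ you obtain via $\gamma^2/4\leq \lambda(A)\omega+|\omega-\nu_m|$.
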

\begin{lemma}[Sharp interface energy near the saddle point]\label{la2}
Fix $\xi \in (\tilde{\xi_d},\xi_d]$ and define $\gamma_0$ as in \eqref{gm0}. The function $\Psi_{s}$ is a saddle point of $\ez$ in the sense that for all $\gamma\in(0,\gamma_0]$, the following hold.
   \begin{itemize}
    \item (Minimizer at volume $\nu_s$) $\Psi_{s}$ minimizes $\ez$ subject to a volume constraint:
    \begin{align}
      \inf_{\substack{|u-\Psi_{s}|_{\R^d}\leq\gamma\\ \nu_0(u)=\nu_s}}\ez(u)=\ez(\Psi_{s}).\label{A1b}
    \end{align}

    \item (Strictness at volume $\nu_s$) There exists $\delta>0$ such that
    \begin{align}
      |u-\Psi_{s}|_{\R^d}\geq\gamma,\, \nu_0(u)=\nu_s\qquad\Rightarrow\qquad \ez(u)\geq \ez(\Psi_{s})+2\delta.\label{A2b}
    \end{align}

    \item (Descent direction) There exists $\delta>0$ such that
    \begin{align}
     \inf_{ |u-\Psi_{s}|_{\R^d}\leq\gamma}\ez(u)\leq \ez(\Psi_{s})-2\delta.\label{A3b}
    \end{align}
  \end{itemize}
\end{lemma}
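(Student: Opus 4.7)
The plan is to exploit the explicit structure of $\ez$: on its domain of finiteness (i.e., $u=\pm 1$ a.e.\ with $\{u=1\}$ of finite perimeter), $\ez$ reduces to a perimeter term plus a function of the volume $|\{u=1\}|$ only. The three claims then follow from (i) the classical isoperimetric inequality (Theorem~\ref{Regular.Isop}), (ii) the sharp quantitative isoperimetric inequality \eqref{Regular.Sharp.Isop} of Fusco--Maggi--Pratelli, and (iii) the fact, visible in Figure~\ref{figurezz} and built into the definition of $c_s$, that $\nu_s$ is a strict local \emph{maximum} of $f_\xi$.

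First I would prove \eqref{A1b}: for any admissible $u$ with $\nu_0(u)=\nu_s$ the non-perimeter part of $\ez(u)$ equals $-4\nu_s+4\nu_s^2/\xi^{d+1}$, so Theorem~\ref{Regular.Isop} gives
\[
\ez(u)\geq c_0\,{\rm P}_E(\{u=1\})-4\nu_s+4\nu_s^2/\xi^{d+1}=f_\xi(\nu_s)=c_s=\ez(\Psi_s).
\]
To upgrade this to the strictness statement \eqref{A2b}, I would use the identity, valid for $u=\pm 1$ a.e.\ with $|\{u=1\}|=\nu_s$,
\[
\|u-\Psi_s(\cdot-x_0)\|_{L^2(\R^d)}^2=4\,|\{u=1\}\,\triangle\,B_{\nu_s}(x_0)|,
\]
so that infimizing over $x_0$ yields $|u-\Psi_s|_{\R^d}^2=4\nu_s\,\lambda(\{u=1\})$. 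The hypothesis $|u-\Psi_s|_{\R^d}\geq\gamma$ thus forces $\lambda(\{u=1\})\geq\gamma^2/(4\nu_s)$, and \eqref{Regular.Sharp.Isop} then delivers
\[
\ez(u)-c_s\geq c_0\,C(d)\,\lambda(\{u=1\})^2\,{\rm P}_E(\{u=1\})\geq c_0\,C(d)\,\frac{\gamma^4}{16\nu_s^2}\,{\rm P}_E(B_{\nu_s})=:2\delta,
\]
uniformly over the strip $\gamma\leq|u-\Psi_s|_{\R^d}\leq\gamma_0$.

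For the descent direction \eqref{A3b} I would test with $u=\Psi(\cdot;\nu)$ for $\nu$ close to but distinct from $\nu_s$. Since $\nu_s$ is a strict local maximum of $f_\xi$, one may choose $\nu$ arbitrarily close to $\nu_s$ with $f_\xi(\nu)<c_s$, whence $\ez(\Psi(\cdot;\nu))=f_\xi(\nu)<c_s$. The elementary bound $|\Psi(\cdot;\nu)-\Psi_s|_{\R^d}^2\leq 4|B_\nu\,\triangle\,B_{\nu_s}|\to 0$ as $\nu\to\nu_s$ then places the competitor inside any prescribed $\gamma$-ball around $\Psi_s$, completing the proof. No step is genuinely difficult; the only point requiring attention is the uniformity of $\delta$ in \eqref{A2b} throughout the annulus $\gamma\leq|u-\Psi_s|_{\R^d}\leq\gamma_0$, which is automatic since the lower bound on the Fraenkel asymmetry depends only on $\gamma$.
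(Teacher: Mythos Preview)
Your proof is correct and follows essentially the same route as the paper's: the isoperimetric inequality for \eqref{A1b}, the quantitative isoperimetric inequality \eqref{Regular.Sharp.Isop} to convert the $L^2$-distance hypothesis into a lower bound on the Fraenkel asymmetry for \eqref{A2b}, and the competitor $\Psi(\cdot;\nu)$ with $\nu$ near $\nu_s$ for \eqref{A3b}. One cosmetic point: the hypothesis in \eqref{A2b} is only $|u-\Psi_s|_{\R^d}\geq\gamma$, with no upper bound $\leq\gamma_0$, so your references to ``the strip'' and ``the annulus'' are superfluous---but since, as you note, your lower bound $\lambda(\{u=1\})\geq\gamma^2/(4\nu_s)$ depends only on the lower threshold $\gamma$, the argument already covers the full half-line.
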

The proofs of the lemmas rely on the quantitative isoperimetric inequality in $\R^d$.
\begin{proof}[Proof of lemma \ref{la}]
Since \eqref{A1} follows from the fact that \eqref{A2} holds for every $\gamma\in (0,\gamma_0]$, it suffices to show \eqref{A2}.
To this end, consider $u$ such that $|u-\Psi_m|_{\R^d}=\tilde{\gamma}\in[\gamma,\gamma_0]$.

We may without loss of generality assume that $u=\pm 1$ a.e.\ and ${\rm Per}(\{u=+1\})<\infty$, since otherwise $\mathcal{E}_0^\xi(u)=\infty$ and \eqref{A2} is trivially satisfied.
Hence we may write $u=-1+2\chi_\mathcal{S}$ with ${\rm Per}(\mathcal{S})<\infty$, so that
\begin{equation}\label{E_0^xi}
\mathcal{E}_0^\xi(u)=c_0{\rm Per}(\mathcal{S})-4|\mathcal{S}|+4\xi^{-(d+1)}|\mathcal{S}|^2.
\end{equation}
Moreover, from $u=\pm 1$ and  $|u-\Psi_m|_{\R^d}=\tilde{\gamma}$, it follows that
\begin{equation}\label{SymDif1}
\frac{\tilde{\gamma}^2}{4}=\min_{x\in \mathbb{R}^d}|B_{\nu_m}(x)\triangle \mathcal{S}|=|B_{\nu_m}(x_0)\triangle \mathcal{S}|
\end{equation}
for some $x_0\in \mathbb{R}^d$.
From \eqref{SymDif1} and
\begin{equation}\label{set.ineq.1}
|B_{\nu_m}(x_0)|-|B_{\nu_m}(x_0)\triangle \mathcal{S}|\leq |\mathcal{S}|\leq |B_{\nu_m}(x_0)|+|B_{\nu_m}(x_0)\triangle \mathcal{S}|,
\end{equation}
one observes that
\begin{equation}\label{C_interval}
|\mathcal{S}|\in \Big[\nu_m-\frac{\tilde\gamma^2}{4},\nu_m+\frac{\tilde\gamma^2}{4}\Big].
\end{equation}
On the one hand, if
\begin{align*}
  \big||\mathcal{S}|-\nu_m\big|\geq\frac{\tilde\gamma^2}{8}\geq\frac{\gamma^2}{8},
\end{align*}
then
we deduce from \eqref{E_0^xi} and the isoperimetric inequality (cf.\ theorem \ref{Regular.Isop})  that
    \begin{equation}\label{delta_1}
    f_\xi(|\mathcal{S}|)\geq f_\xi(\nu_m)+\delta_1\qquad \text{for some}\,\, \delta_1>0.
    \end{equation}
It thus suffices to consider
\begin{align*}
 \big||\mathcal{S}|-\nu_m\big|\leq \frac{\gamma^2}{8}.
\end{align*}
In this case, we claim that the Fraenkel asymmetry $\lambda_\mathcal{S}$ of $\mathcal{S}$ satisfies \begin{equation}\label{lambda_C}
    \lambda_\mathcal{S}:=\min_{x\in \mathbb{R}^d}\frac{\big |B_{|\mathcal{S}|}(x)\triangle \mathcal{S}\big|}{|\mathcal{S}|}\geq \lambda_0\qquad\text{for some}\;\lambda_0>0.
    \end{equation}
Indeed, recalling \eqref{SymDif1}, we have for every $x\in \mathbb{R}^d$ that
    \[
    \big|B_{|\mathcal{S}|}(x)\triangle \mathcal{S}\big|\geq \big|B_{\nu_m}(x_0) \triangle \mathcal{S}\big|-\big||\mathcal{S}|-|B_{\nu_m}(x_0)|\big|=\frac{\tilde\gamma^2}{4}-\big||\mathcal{S}|-\nu_m\big| \geq \frac{\tilde\gamma^2}{8}\geq \frac{\gamma^2}{8},
    \]
    which in particular implies
    \[
    \frac{ \big|B_{|\mathcal{S}|}(x)\triangle \mathcal{S}\big|}{|\mathcal{S}|}\geq \frac{\gamma^2}{8|\mathcal{S}|}\overset{\eqref{C_interval}}\geq \frac{\gamma^2}{8(\nu_m+\gamma^2/8)}=:\lambda_0.
    \]
  Minimizing over $x$ yields \eqref{lambda_C}. Now we use the sharp  isoperimetric inequality \eqref{Regular.Sharp.Isop} and
\begin{equation}\label{gamma_0.2}
{\rm Per}(B_\nu)\geq {\rm Per}(B_{\nu_s})\qquad\text{for all }\nu \in [\nu_m-\gamma_0^2/4,\nu_m+\gamma_0^2/4]
\end{equation}
to deduce
    \begin{eqnarray}
    \ez(u)&\overset{\eqref{Regular.Sharp.Isop},\eqref{lambda_C}} \geq& c_0 {\rm P}_E(\mathcal{S})-4|\mathcal{S}|+4\xi^{-(d+1)}|\mathcal{S}|^2+c_0C(d){\rm P}_E(\mathcal{S})\lambda_0^2 \notag \\
    &=& f_\xi(|\mathcal{S}|)+c_0C(d){\rm P}_E(\mathcal{S})\lambda_0^2 \notag \\
    &\overset{\eqref{gamma_0.1},\eqref{gamma_0.2}}\geq& f_\xi(\nu_m)+c_0C(d){\rm P}_E(B_{\nu_s})\lambda_0^2 \notag \\
    &=& \ez(\Psi_m)+  c_0C(d){\rm P}_E(B_{\nu_s})\lambda_0^2,
    \end{eqnarray}
which establishes \eqref{A2} with $\delta= c_0C(d){\rm P}_E(B_{\nu_s})\lambda_0^2$.
\end{proof}



\begin{proof}[Proof of lemma \ref{la2}]
The proof is analogous to that of lemma \ref{la}. We remark that \eqref{A1b} follows from the fact that \eqref{A2b} holds for every $\gamma\in(0,\gamma_0]$.
Also note that it is a simple matter to establish \eqref{A3b} since it suffices to observe that, for example, for  $u_\nu:=-1+2\chi_{B_\nu}$ with
\[
\nu=\nu_s+\frac{\gamma^2}{4},
\]
we have $ |u_\nu-\Psi_{s}|_{\R^d}=\gamma$ and
\[
\ez(u_\nu)=f_\xi(\nu)=f_\xi(\nu_s)-\delta=\ez(\Psi_{s})-\delta,
\]
for some $\delta>0$.

Hence it suffices to show \eqref{A2b}. As above, we may assume without loss of generality that $u=-1+2\chi_\mathcal{S}$ with ${\rm Per}(\mathcal{S})<\infty$ and $|\mathcal{S}|=\nu_s$, in which case there holds
\begin{equation}\label{E_0^xi2}
\mathcal{E}_0^\xi(u)=c_0{\rm Per}(\mathcal{S})-4\nu_s+\frac{4\nu_s^2}{\xi^{d+1}}.
\end{equation}
Because of the constraint $|\mathcal{S}|=\nu_s$, we observe as for \eqref{lambda_C}  that $|u-\Psi_s|_{\R^d}\geq \gamma$ implies
\begin{equation}\label{lambda_s}
\lambda(\mathcal{S})\geq \frac{\gamma^2}{8\nu_s}=:\lambda_0.
\end{equation}
Hence \eqref{E_0^xi2}, \eqref{lambda_s}, and the quantitative isoperimetric inequality \eqref{Regular.Sharp.Isop} yield
\eqref{A2b} with
\[
\delta:=c_0C(d){\rm P}_E(B_{\nu_s})\lambda_0^2.
\]
\end{proof}
\subsection{Constrained minimizers are bounded}\label{ss:linf}
Here we prove lemma \ref{l:linf}. The proof builds on \cite[lemma 2.3]{GW}.
\begin{proof}[Proof of lemma \ref{l:linf}]
As usual, we set $\kappa=\phi^{1/3}$ and assume $\phi\leq \phi_0$ for an appropriately chosen value $\phi_0$. We proceed in two steps.

\underline{Step 1: Upper bound.}
The first step is a reformulation of a specific case of \cite[lemma 2.3]{GW}. We show, for any $u\in X_\phi$, if $|\{x\colon u(x)>1+\kappa\}|>0$, then there exists a function $\tilde{u}\in X_\phi$ such that
\begin{itemize}
\item   [(i)] $\nu(\tilde{u})=\nu(u)$,
\item   [(ii)] $\tilde{u}=u$ on $\{x:-1+\phi\leq u(x)\leq 1+\kappa\}$,
\item   [(iii)] $ \esup\; \tilde{u}\leq 1+\kappa$,
\item   [(iv)] $\ephi(\tilde{u})< \ephi(u)$.
\end{itemize}
For completeness, we recount the proof.  Recall the definition $\cale:=\{x\colon u(x)>1+{\kappa}\}$ and let $\hat{\calb}:=\{x\colon u(x)< -1+\phi\}.$ Notice that $|\cale|>0$ and the mean condition imply $|\hat{\calb}|>0$.
For $\lambda \in [0,1]$ we define the function
\begin{align*}
\tilde{u}_\lambda(x):=
\begin{cases}
\min\{u(x),1+\kappa \}\quad&\text{for}\quad x\in \tphil\setminus\hat{\calb}\\
(1-\lambda)u(x)+\lambda (-1+\phi)\quad&\text{for}\quad x\in \hat{\calb}.
\end{cases}
\end{align*}
It is easy to see that
$$\dashint_{\tphil} \tilde{u}_0\,dx<-1+\phi, \qquad \dashint_{\tphil} \tilde{u}_1\,dx>-1+\phi.$$
Hence there exists $\lambda_*\in(0,1)$ such that $$\dashint_{\tphil} \tilde{u}_{\lambda_*}dx=-1+\phi.$$
The function $\tilde{u}:=\tilde{u}_{\lambda_*}$ belongs to $X_\phi$ and satisfies properties (i)-(iii).
It remains to check  (iv). The energy difference can be written as
\begin{align}
\ephi(\tilde{u})-\ephi(u)&=\int_{\hat{\calb}\cup\cale}\frac{\phi}{2}|\nabla \tilde{u}|^2-\frac{\phi}{2}|\nabla u|^2+\frac{1}{\phi}\Big(G(\tilde{u})-G(u)\Big)\,dx\notag\\
&\leq \frac{1}{\phi}\int_{\hat{\calb}\cup\cale}G(\tilde{u})-G(u)\,dx,\label{energy ineq}
\end{align}
where we have used (ii) and the fact that the gradient energy of $\tilde{u}$ is smaller than that of $u$.
Since $u
\leq \tilde{u}\leq -1+\phi$ on $\tilde{\calb}$, the convexity of $G$ on $(-\infty,-1+\phi]$ implies
\begin{align*}
G(\tilde{u})-G(u)\leq G'(\tilde{u})(\tilde{u}-u)\leq G'(-1+\phi)(\tilde{u}-u).
\end{align*}
On $\cale$ on the other hand, $\tilde{u}=1+\kappa$ and the convexity of $G$ on $[1+\kappa, \infty)$ imply that
$$G(1+\kappa )-G(u)\leq -G'(1+\kappa)\big(u-\tilde{u}\big).$$
Inserting these two inequalities into \eqref{energy ineq} yields
\begin{align}
\ephi(\tilde{u})-\ephi(u)
&\leq \frac{G'(-1+\phi)}{\phi}\int_{\hat{\calb}}\tilde{u}-u\,dx+\frac{G'(1+\kappa)}{\phi}\int _{\cale}\tilde{u}-u\,dx\notag\\
&< \frac{G'(1+\kappa )}{\phi}\int_{\hat{\calb}\cup\cale}\tilde{u}-u\,dx=0,
\end{align}
where the final equality is a consequence of (ii) and $\int _{\tphil}\tilde{u}\,dx=\int _{\tphil}u\,dx$. The last inequality is strict since $G'(-1+\phi)<G'(1+\kappa)$ and because the assumption $|\cale|>0$ implies $\int_{\cale}(\tilde{u}-u)\,dx<0$ and therefore $\int_{\hat{\calb}}(\tilde{u}-u)\,dx>0$.

\underline{Step 2: Lower bound.}
We show, for any $u\in X_\phi$, if
\begin{align*}
|\{x\colon u(x)<-1-\kappa\}|>0\qquad \text{and}\qquad \ephi(u)\leq E_M,
\end{align*}
then there exists a function $\tilde{u}\in X_\phi$ such that
\begin{itemize}
\item   [(i)] $\nu(\tilde{u})=\nu(u)$,
\item   [(ii')] $\tilde{u}=u$ on $\{x:-1-\kappa\leq u(x)\leq 1-\kappa\}$,
\item   [(iii')] $ \einf\; \tilde{u}\geq -1-\kappa$,
\item   [(iv)] $\ephi(\tilde{u})< \ephi(u)$.
\end{itemize}
Recall the definition $\cala:=\{x\colon u(x)<-1-\kappa\}$ and let $\hat{\cald}:=\{x\colon u(x)>1-\kappa\}$.
For $\lambda \in [0,1]$ we define the function
\begin{align*}
\tilde{u}_\lambda(x):=
\begin{cases}
\max\{u(x),-1-\kappa \}\quad&\text{for}\quad x\in \tphil\setminus\hat{\cald}\\
(1-\lambda)u(x)+\lambda (1-\kappa)\quad&\text{for}\quad x\in \hat{\cald}.
\end{cases}
\end{align*}
It is easy to see that
$$\dashint_{\tphil} \tilde{u}_0\,dx>-1+\phi.$$
In order to check the mean of $\tilde{u}_1$, we deduce from \eqref{iii''}, \eqref{int.G.bound}, and the Cauchy-Schwarz inequality that
\begin{align}
 \int_{\cala}\left|u-(-1-\kappa)\right|\,dx&\leq \left(|\cala|\int_{\cala}\left|u-(-1-\kappa)\right|^2\,dx\right)^{1/2}\notag\\
&\lesssim \left(|\cala|\int_{\cala}G(u)\,dx\right)^{1/2}\lesssim \phi^{2/3}.\label{csl}
\end{align}
We use this fact to estimate
\begin{eqnarray}
  \int_{\tphil}(u-\tilde{u}_1)\,dx&=&\int_{\cala}u-(-1-\kappa)\,dx+\int_{\hat{\cald}}u-(1-\kappa)\,dx\notag\\
  &\overset{\eqref{csl}}\geq& -O(\phi^{2/3})+\int_{\{u>1-\kappa/2\}}u-(1-\kappa)\,dx\notag\\
  &\geq& -O(\phi^{2/3})+\frac{\phi^{1/3}}{2}\left(\omega_1+O(\phi^{1/3})\right)>0,\label{fomu}
\end{eqnarray}
where we have as in lemma \ref{l:bulk} deduced
\begin{align*}
  \left|\left\{x\colon 1-\kappa\leq u(x)\leq 1-\frac{\kappa}{2}\right\}\right|\lesssim \phi^{1/3}.
\end{align*}
From \eqref{fomu} we obtain
$$\dashint_{\tphil} \tilde{u}_1\,dx<-1+\phi.$$
Hence, as in step 1, there exists $\lambda_*\in(0,1)$ such that $$\dashint_{\tphil} \tilde{u}_{\lambda_*}dx=-1+\phi.$$
As in step 1, we define $\tilde{u}:=\tilde{u}_{\lambda_*}\in X_\phi$, for which (i), (ii'), and (iii') follow immediately, and
it remains only to check (iv).
Analogously to above, we observe that
\begin{align}
\ephi(\tilde{u})-\ephi(u)&\leq \frac{1}{\phi}\int_{\cala\cup\hat{\cald}}G(\tilde{u})-G(u)\,dx,\label{energg}
\end{align}
and deduce from convexity of $G$ on $(-\infty,-1-\kappa]$ and on $[1-\kappa,\infty)$ that
\begin{align*}
  G(-1-\kappa)-G(u)&\leq -G'(-1-\kappa)\big(u-\tilde{u}\big)   \qquad\text{on }u<-1-\kappa,\\
  G(\tilde{u})-G(u)&\leq -G'(\tilde{u})\big(u-\tilde{u}\big)\qquad \text{on }u>1-\kappa,
\end{align*}
respectively. Note that, since in after Step 1 we can assume $u\leq 1+\kappa$ on $\hat{\cald}$, there holds $1-\kappa\leq \tilde{u}\leq 1+\kappa-2\lambda_* \kappa$ on $\hat{\cald}$. Since $\lambda_*\in(0,1)$ one can check that $\sup_{\hat{\cald}}|G'(\tilde{u})|<-G'(-1-\kappa)$. Substituting into \eqref{energg}, we conclude as in step 1 that
\begin{align*}
  \ephi(\tilde{u})-\ephi(u)&<\frac{-G'(-1-\kappa)}{\phi}\int_{\cala\cup\hat{\cald}}u-\tilde{u}\,dx=0.
\end{align*}
\medskip

The combination of steps 1 and 2 and the characterization of $\uo$ as a minimizer of $\ephi$ subject to $\nu(u)=\omega$ imply \eqref{einfsup}.
\end{proof}


\section{Existence and properties via the $\Gamma$-limit and isoperimetric inequalities}\label{S:min}
In this section we use the $\Gamma$-limit in order to deduce existence and properties of  critical points.
We begin with the proof of theorem \ref{t:exist}, using the upper bound constructions from  proposition \ref{prop.upper.bound}, the structure of the limit energy from  lemma \ref{la}, and  lemma \ref{lc} below, which establishes a uniform lower bound on the energy of functions that are $\gamma$ away from $\Psi_m$. We will refer to lemma \ref{lc} as the finite $\phi$ estimate for the local minimizer. In subsection \ref{ss:sadtoo}, we prove proposition \ref{t:exus} and theorem \ref{t:saddle}, also introducing the finite $\phi$ estimate for the saddle point. Finally, we prove theorem \ref{t:spheres} in subsection \ref{ss:dev}, quantifying the sphericity of the constrained minimizers.

\subsection{Local minimizer}\label{ss:locmin}
The finite $\phi$ estimate for the local minimum is the following.
\begin{lemma}[Finite $\phi$ estimate]\label{lc}
Consider $\xi\in(\tilde{\xi_d},\xi_d]$ and the critical scaling \eqref{crit}. Define $\gamma_0$ as in \eqref{gm0}. For every $\gamma\in (0,\gamma_0]$ and $\delta>0$, there exists $\phi_0>0$ such that for $\phi\leq \phi_0$, one has for all $u\in X_\phi$ that
\begin{align}
\gamma\leq |u-\Psi_m|_{\tphil}\leq\gamma_0\qquad\Rightarrow\qquad \ephi(u)\geq \inf_{\gamma\leq|u-\Psi_m|_{\R^d} \leq\gamma_0}\ez(u)-\delta.\label{C}
\end{align}
\end{lemma}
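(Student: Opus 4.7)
The plan is to argue by contradiction, combining the translation invariance of the torus, the compactness/$\liminf$ parts of the $\Gamma$-convergence result of \cite{GW}, and the bulk estimates of lemma \ref{l:bulk}.

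Suppose the statement fails. Then there exist $\gamma\in(0,\gamma_0]$, $\delta>0$, a sequence $\phi_n\to 0$, and $u_n\in X_{\phi_n}$ such that
\begin{equation*}
\gamma\leq |u_n-\Psi_m|_{\mathbb{T}_{\phi_n L_n}}\leq \gamma_0 \qquad\text{but}\qquad \mathcal{E}_{\phi_n}^\xi(u_n) < c-\delta,
\end{equation*}
where $c:=\inf\{\ez(u)\colon \gamma\leq |u-\Psi_m|_{\R^d}\leq\gamma_0\}$. Since the upper-bound construction of proposition \ref{prop.upper.bound} shows $c$ is finite, $\mathcal{E}_{\phi_n}^\xi(u_n)$ is uniformly bounded, and in particular satisfies \eqref{em.sept} for $n$ large. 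First I would exploit the translation invariance on the torus: choose $x_n\in\mathbb{T}_{\phi_n L_n}$ so that the infimum in $|u_n-\Psi_m|_{\mathbb{T}_{\phi_n L_n}}$ is essentially attained at $x_0=0$, and replace $u_n$ by its translate. Since $\phi_n|\mathbb{T}_{\phi_n L_n}|=\xi^{d+1}$ while $\phi_n\to 0$, the tori exhaust $\R^d$, and we may view each (translated) $u_n$ as a function on $\R^d$ that is defined on a ball of radius tending to infinity.

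Next, from the uniform energy bound and lemma \ref{l:bulk}, $u_n\in \{\pm 1\}$ up to a set of vanishing measure, and $\nu(u_n)$ is bounded. The standard compactness argument that accompanies the $\Gamma$-convergence result of \cite{GW} (essentially a Modica–Mortola type compactness adapted to the expanding torus) yields, after passing to a subsequence, a limit $u_\infty\in -1+L^p(\R^d)$ with $u_\infty\in\{\pm 1\}$ a.e.\ and $\mathrm{Per}(\{u_\infty=1\})<\infty$, such that $u_n\to u_\infty$ in $-1+L^p_{\mathrm{loc}}(\R^d)$. The $\liminf$ inequality of the $\Gamma$-convergence then gives
\begin{equation*}
\ez(u_\infty)\leq \liminf_{n\to\infty} \mathcal{E}_{\phi_n}^\xi(u_n) \leq c-\delta.
\end{equation*}

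The crux of the argument is to show that $u_\infty$ is admissible for the infimum defining $c$, i.e.\ that $\gamma\leq |u_\infty-\Psi_m|_{\R^d}\leq\gamma_0$. The upper bound follows from lower semicontinuity: for any $\eta>0$, picking $x_{0,n}\in\mathbb{T}_{\phi_n L_n}$ nearly attaining $|u_n-\Psi_m|_{\mathbb{T}_{\phi_n L_n}}$, the sequence $x_{0,n}$ stays bounded (otherwise the $L^2$ distance would exceed $\gamma_0$ by a tail argument using lemma \ref{l:bulk}), so along a further subsequence $x_{0,n}\to x_0^\infty\in\R^d$, and by Fatou applied on growing balls
\begin{equation*}
\|u_\infty-\Psi_m(\cdot-x_0^\infty)\|_{L^2(\R^d)} \leq \liminf_n \|u_n-\Psi_m(\cdot-x_{0,n})\|_{L^2(\mathbb{T}_{\phi_n L_n})}\leq \gamma_0.
\end{equation*}
For the lower bound, fix any $x_0\in\R^d$; for $n$ large, $x_0\in\mathbb{T}_{\phi_n L_n}$, so by the definition of the torus distance
\begin{equation*}
\gamma\leq \|u_n-\Psi_m(\cdot-x_0)\|_{L^2(\mathbb{T}_{\phi_n L_n})}.
\end{equation*}
Splitting this integral into $B_R(x_0)\cap\mathbb{T}_{\phi_n L_n}$ and its complement, the local part converges to $\|u_\infty-\Psi_m(\cdot-x_0)\|_{L^2(B_R(x_0))}$, while the tail is controlled by lemma \ref{l:bulk} and the bounded volume of the droplet, uniformly as $R\to\infty$. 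Passing $n\to\infty$ then $R\to\infty$ gives $\|u_\infty-\Psi_m(\cdot-x_0)\|_{L^2(\R^d)}\geq\gamma$, and taking the infimum over $x_0$ yields $|u_\infty-\Psi_m|_{\R^d}\geq\gamma$. Thus $u_\infty$ is admissible, so $\ez(u_\infty)\geq c$, contradicting $\ez(u_\infty)\leq c-\delta$.

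The main obstacle is the last step, namely passing the lower bound $\gamma\leq |u_n-\Psi_m|_{\mathbb{T}_{\phi_n L_n}}$ to the $\R^d$-distance. Because the torus distance infimizes over a compact set of translates while the $\R^d$ distance infimizes over all of $\R^d$, one must rule out that mass of $u_n-\Psi_m(\cdot-x_0)$ escapes to infinity on the expanding torus; the bulk estimates of lemma \ref{l:bulk} together with the bounded droplet volume from proposition \ref{lemma.lower.bound} (applied to $u_n$) provide exactly the tail control needed.
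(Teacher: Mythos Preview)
Your overall strategy (contradiction, translate to center, extract a local $L^2$ limit $u_\infty$, compare energies) matches the paper's, but the crucial step---showing that the limit competitor is admissible for the infimum defining $c$---has a genuine gap, and the paper explicitly warns that this step \emph{cannot} be repaired the way you suggest.

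The issue is your claim that ``the tail is controlled by lemma \ref{l:bulk} and the bounded volume of the droplet, uniformly as $R\to\infty$.'' Lemma \ref{l:bulk} and proposition \ref{lemma.lower.bound} tell you that $|\cald(u_n)|\lesssim 1$, but they say nothing about \emph{where} $\cald(u_n)$ sits inside the expanding torus. Nothing prevents a piece of $\{u_n\approx +1\}$ of fixed positive measure $\beta/4$ from migrating to points $y_n$ with $|y_n|\to\infty$. In that scenario, for each fixed $R$ one has
\[
\|u_n-(-1)\|_{L^2(\tphil\setminus B_R)}^2 \;\geq\; 4\,\bigl|\cald(u_n)\cap(\tphil\setminus B_R)\bigr| \;\to\; \beta>0,
\]
so the tail does \emph{not} vanish uniformly in $n$ as $R\to\infty$. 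Consequently the local limit $u_\infty$ can satisfy $|u_\infty-\Psi_m|_{\R^d}<\gamma$ even though $|u_n-\Psi_m|_{\tphil}\geq\gamma$ for all $n$; your admissibility argument then collapses, and with it the contradiction.

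The paper handles exactly this loss of compactness. Rather than insisting that $u_\infty$ itself be admissible, it quantifies the escaped mass $\beta/4$ (Step~3 of the proof), shows via the coarea formula and the isoperimetric inequality on the torus that this escaped mass carries its own perimeter cost of order $(\beta/4)^{(d-1)/d}$ in $\liminf \ephi(u_n)$ (Step~4), and then builds an auxiliary competitor $\tilde u$ equal to $u_\infty$ on a large cube together with a far-away ball of volume $\beta/4$. This $\tilde u$ satisfies $|\tilde u-\Psi_m|_{\R^d}=\gamma$ exactly and $\ez(\tilde u)\leq\liminf \ephi(u_n)$, which yields the contradiction. The ``no free lunch'' perimeter estimate for the escaping piece is the missing idea in your argument.
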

As remarked in the introduction, we cannot get compactness in the usual way since $\tphil$ grows to $\R^d$ in the limit. The proof of lemma \ref{lc}, which is fairly involved, is given in subsection \ref{ss:pflemmas}. In fact, we only use this information in  the proof of theorem \ref{t:exist} in the form of the simpler  estimate \eqref{thasall}. In lemma \ref{l:alt} we point out that \eqref{thasall} can be established by a conceptually simpler and much shorter proof.
We include lemma \ref{lc} because we believe that it is interesting in its own right---indeed, it tells us that being bounded away from the limit minimizer creates the \emph{same energetic penalization}, up to a small error, as it creates in the limit energy. The same remark holds true for the corresponding saddle point estimates. See also remarks \ref{rem:all} and \ref{rem:pass}.

\begin{proof}[Proof of theorem \ref{t:exist}]
For any $\phi>0$, the direct method of the calculus of variations yields a function $\um\in X_\phi$ that minimizes $\ephi$ subject to the constraint $|u-\Psi_m|_{\tphil}\leq \gamma_0$.
For $\gamma\in (0,\gamma_0]$, let $\delta>0$ be the constant given in \eqref{A2}. Lemma \ref{lc} says, for $\phi$ sufficiently small, that functions with $\gamma\leq |u-\Psi_m|_{\tphil}\leq\gamma_0$ satisfy
\begin{align}
  \ephi(u)\geq \inf_{\gamma\leq|u-\Psi_m|_{\R^d} \leq\gamma_0}\ez(u)-\delta\overset{\eqref{A2}}\geq c_m+\delta.\label{thasall}
\end{align}
At the same time,
according to \eqref{upper.bound} with $\omega_2=\nu_m$ and $t=1$, there exists $\hat{u}_{\omega,\phi}$ such that $|\hat{u}_{\omega,\phi}-\Psi_m|_{\tphil}\leq \gamma_0$ and
\begin{equation}\label{t:exist.constr.}
  \ephi(\hat{u}_{\omega,\phi})= c_m+ O(\phi |\ln \phi|)<c_m+\frac{\delta}{2}
\end{equation}
for $\phi$ sufficiently small. We deduce on the one hand that $\um$ is an unconstrained local minimizer of $\ephi$ (since it belongs to the interior of the $L^2$-$\gamma_0$ ball around $\Psi_m$) and on the other hand that \eqref{ump} holds.

To address the estimates \eqref{en.cl}-\eqref{num}, we begin by establishing rough closeness to $\nu_m$ in volume. For the lower bound, we use lemma \ref{l:bulk}, and assume without loss of generality that $|\um-\Psi_m|_{\tphil}=||\um-\Psi_m||_{L^2(\tphil)}$, to argue
\begin{eqnarray*}
  \nu(\um)&\geq& \left|\{1-\phi^{1/3}\leq \um\leq 1+\phi^{1/3}\}\right|\\
 &\geq& \nu_m-\left|\{x\in B_{\nu_m}\colon \um<1-\phi^{1/3}\;\text{or}\;\um>1+\phi^{1/3} \} \right|\\
 &\overset{\eqref{ii},\eqref{iii}}=& \nu_m-\left|\{x\in \tphil\colon\Psi_m=1\;\text{and}\; \um<-1+\phi^{1/3} \} \right|+o(1)\\
 &\overset{\eqref{ump}}\geq& \nu_m-\gamma^2+o(1).
\end{eqnarray*}
For the corresponding upper bound, note that
\[
\gamma^2\overset{\eqref{ump}}\geq\left|\{\um\geq 1-\phi^{1/3}\;\text{and}\;\Psi_m=-1\}\right|\geq \left|\{\um \geq1-\phi^{1/3}\}\right|-\nu_m\overset{\eqref{saddle.superset}}=\nu(\um)-\nu_m+o(1).
\]
One thus obtains for $\phi_0>0$ sufficiently small that
 \begin{align}
   |\nu(\um)-\nu_m|&\lesssim \gamma^2 \qquad\text{for all $0<\phi\leq \phi_0$}.\label{vol.77}
\end{align}
In particular, for $\gamma$ small enough, $\nu(\um)$ falls within a neighborhood of $\nu_m$ on which $f_\xi$ is convex and has $c_m$ as a minimum value.
We use these facts to improve to the quantitative estimates. Combining \eqref{t:exist.constr.} and \eqref{lower.bound} leads on the one hand to
\begin{align}
O(\phi^{1/3})\geq   \ephi(\hat{u}_{\omega,\phi})-c_m\geq \ephi(\um)-c_m &\geq f_\xi(\nu(\um))-c_m+O(\phi^{1/3})\label{fife}\\
&\geq O(\phi^{1/3}),\notag
\end{align}
which is \eqref{en.cl}. On the other hand, \eqref{vol.77} justifies a Taylor approximation of $f_\xi$ on the right-hand side of \eqref{fife}, which
implies
\begin{align}
O(\phi^{1/3})\geq  \ephi(\um)-c_m\geq  |\nu(\um)-\nu_m|^2/C+O(\phi^{1/3})\notag
\end{align}
for a constant $C=C(\xi,d)$. From here one deduces \eqref{num}.

\end{proof}


\begin{lemma} \label{l:alt}
Let $\gamma_0$ be as in \eqref{gm0}. For any $\gamma \in (0,\gamma_0]$, there exists $\delta>0$ and $\phi_0>0$ such that for all $\phi<\phi_0$ there holds
\begin{align}
 \gamma\leq |u-\Psi_m|_{\tphil}\leq \gamma_0\qquad\Rightarrow\qquad \ephi(u)\geq \ez(\Psi_m)+\delta.\label{eqr}
\end{align}
\end{lemma}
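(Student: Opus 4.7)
The plan is to avoid the compactness argument underlying lemma \ref{lc} and instead combine the lower bound of proposition \ref{lemma.lower.bound} directly with the sharp quantitative isoperimetric inequality \eqref{SharpIsoper2}. Since the conclusion is trivial when $\ephi(u) > E_M$, I would assume $\ephi(u) \leq E_M$ and invoke \eqref{lower.bound}, obtaining $\ephi(u) \geq f_\xi(\nu(u)) + I(u) + O(\phi^{1/3})$, with $I(u) \geq 0$ by the (non-quantitative) isoperimetric inequality on the torus, theorem \ref{Regular.Isop.Torus}. I would then split into two cases according to the threshold $\eta := \gamma^2/16$ on $|\nu(u) - \nu_m|$, extracting a strict gain either from $f_\xi$ or from $I(u)$.

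In the first case, $|\nu(u) - \nu_m| \geq \eta$. A preliminary observation is that the hypothesis $|u - \Psi_m|_{\tphil} \leq \gamma_0$ forces $|\nu(u) - \nu_m| \lesssim \gamma_0^2/4 + O(\phi^{1/3}) = \nu_m - \nu_s + o(1)$ via the argument used to establish \eqref{vol.77} in the proof of theorem \ref{t:exist}, so $\nu(u)$ lies (up to a $\phi$-small correction) in the interval $[\nu_m - \gamma_0^2/4, \nu_m + \gamma_0^2/4]$. Since $\nu_m$ is the strict minimizer of $f_\xi$ on that interval by \eqref{gamma_0.1}, continuity of $f_\xi$ yields a constant $\delta_1 = \delta_1(\eta) > 0$ with $f_\xi(\nu(u)) \geq c_m + \delta_1$.

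In the second case, $|\nu(u) - \nu_m| \leq \eta$. Taking concentric balls gives $|\Psi_m - \Psi(\cdot;\nu(u))|_{\tphil}^2 \leq 4\eta = \gamma^2/4$, so the triangle inequality yields $|u - \Psi(\cdot;\nu(u))|_{\tphil} \geq \gamma/2$. Lemma \ref{l:l2fr} then produces $\lambda(\{u \geq 1 - 2\phi^{1/3}\}) \gtrsim \gamma^2 - O(\phi^{1/3})$, and lemma \ref{l:frel} propagates this lower bound to every superlevel set $\{u > t\}$ with $t \in [-1 + 2\phi^{1/3}, 1 - 2\phi^{1/3}]$. Inserting \eqref{SharpIsoper2} into \eqref{DefnIsopTerm}, and noting that $\nu(u) \gtrsim 1$ in this case (so that ${\rm P}_E(\{u > t\})$ is bounded below) while the correction $4d|\{u > t\}|/(\phi L) = O(\phi^{1/d})$, one obtains $I(u) \gtrsim \gamma^4 - O(\phi^{1/d})$.

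Combining the two cases yields $\ephi(u) \geq c_m + \min(\delta_1, C\gamma^4) - O(\phi^{1/3}) - O(\phi^{1/d})$, which for $\phi \leq \phi_0$ small enough gives the claim with $\delta := \tfrac12\min(\delta_1, C\gamma^4)$, since $\ez(\Psi_m) = f_\xi(\nu_m) = c_m$. The main obstacle, I expect, will be the bookkeeping of error terms: one must verify that each invocation of lemmas \ref{l:bulk}, \ref{l:l2fr}, and \ref{l:frel} produces a $\phi$-correction that is absorbed into $\delta$, and lemma \ref{l:frel} is in particular essential in order to transfer the asymmetry bound from the single level set $\{u \geq 1 - 2\phi^{1/3}\}$ to the entire family of superlevel sets defining $I(u)$.
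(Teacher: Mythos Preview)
Your proposal is correct and follows essentially the same argument as the paper: reduce to $\ephi(u)\leq E_M$, split on the size of $|\nu(u)-\nu_m|$, use the strict minimality of $f_\xi$ at $\nu_m$ on the interval \eqref{gamma_0.1} in the first case, and in the second case bound $I(u)$ from below via the quantitative isoperimetric inequality \eqref{SharpIsoper2} after obtaining a uniform lower bound on the Fraenkel asymmetry of the superlevel sets. The only noteworthy difference is that the paper obtains the asymmetry lower bound by estimating $|B_{\nu_u}(x)\triangle\{u>0\}|$ directly from $|u-\Psi_m|_{\tphil}\geq\gamma$ and lemma \ref{l:bulk}, whereas you route through lemma \ref{l:l2fr} (using it in the contrapositive direction) together with the triangle inequality for $|\cdot|_{\tphil}$; both approaches yield the same conclusion, and the threshold $\gamma^2/8$ versus your $\gamma^2/16$ is immaterial.
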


\begin{proof}[Proof of lemma \ref{l:alt}]
We prove lemma \ref{l:alt} via an adaptation to $\phi>0$, $\tphil$ of the proof of lemma \ref{la}.
Fix any $\gamma \in (0,\gamma_0]$. As usual, by translating on the torus, we will assume that
\begin{align}
|u-\Psi_m|_{\tphil}^2:=  \inf_{x_0\in\tphil} ||u-\Psi_m(\cdot-x_0)||_{L^2(\tphil)}^2=||u-\Psi_m(\cdot-0)||_{L^2(\tphil)}^2.\label{gFL}
\end{align}
Note that we may assume without loss of generality that
\[
\ephi(u)\leq E_M \quad\text{(cf.\ \eqref{emm})},
\]
so that by lemma \ref{l:bulk}, the definition of $\nu(u)$, and $||u-\Psi_m||_{L^2(\tphil)}\leq \gamma_0$ we obtain
\begin{align}
  \nu_m-\frac{\gamma_0^2}{4}+O(\phi^{1/3})\leq \nu(u)\leq \nu_m+\frac{\gamma_0^2}{4}+O(\phi^{1/3}).\label{basin2}
\end{align}

Analogously to in the proof of lemma \ref{la}, we observe that $|\nu(u)-\nu_m|>\gamma^2/8$ (and the lower bound from proposition \ref{lemma.lower.bound}) would imply for $\phi$ sufficiently small that
\begin{align*}
  \ephi(u)\geq f_\xi (\nu(u))+O(\phi^{1/3})>f_\xi(\nu_m)+\delta+O(\phi^{1/3})\geq f_\xi(\nu_m)+\frac{\delta}{2}=\ez(\Psi_m)+\frac{\delta}{2}.
\end{align*}
Hence we may assume that
\begin{align}
  |\nu(u)-\nu_m|\leq \frac{\gamma^2}{8}.\label{voldiff}
\end{align}
As in the proof of lemma \ref{la}, this will lead to a positive lower bound for the Fraenkel asymmetry of suitable sets. It remains to establish this fact.

To begin, we remark that, since \eqref{basin2} implies $f_\xi(\nu(u))\geq f_\xi(\nu_m)$, it suffices in light of proposition \ref{lemma.lower.bound}  to establish a positive ($\phi$-independent) lower bound on
\begin{equation}
I(u)=\int_{-1+2\kappa}^{1-2\kappa}\sqrt{2\tilde{G}(t)}\,\Big({\rm Per}_{\tphil}(\{u>t\})-{\rm P}_E(\{u>t\})\Big)\,dt.\label{IF}
\end{equation}

  Given the scaling regime \eqref{crit}, the quantitative isoperimetric inequality \eqref{SharpIsoper2}, and the lower bound on ${\rm P}_E(\{u>s\})$ implied by \eqref{basin2}, it suffices to establish a positive ($\phi$-independent) lower bound on $\lambda(\{u>s\})$ for all $s\in [-1+2\phi^{1/3},1-2\phi^{1/3}]$. According to lemma \ref{l:frel}, it in fact suffices to establish a  positive ($\phi$-independent) lower bound on $\lambda(\{u>0\})$. Since corollary \ref{cor.1} implies that
\begin{equation}\label{zero.vol}
\nu_u:=  |\{u>0\}|=\nu(u)+O(\phi^{1/3}),
\end{equation}
it suffices to produce a lower bound on
\begin{align*}
   \min_{x\in\tphil} |B_{\nu_u}(x)\triangle \{u>0\}|.
\end{align*}

To this end, we observe that \eqref{gFL} and lemma \ref{l:bulk} imply
\begin{align*}
  |B_{\nu_m}(x)\triangle \{u>0\}|\geq\frac{\gamma^2}{4}+o(1).
\end{align*}
Using the triangle inequality \eqref{sdtri}, we deduce from this fact together with \eqref{voldiff} and \eqref{zero.vol}  that
\begin{align*}
  |B_{\nu_u}(x)\triangle \{u>0\}|\geq\frac{\gamma^2}{8}+o(1).
\end{align*}
Since $x\in\tphil$  is arbitrary, we have established
\begin{align}
 \min_{x\in\tphil} |B_{\nu_u}(x)\triangle \{u>0\}|\geq\frac{\gamma^2}{8}+o(1).\label{FIF}
\end{align}
\end{proof}


\subsection{Analogous estimates for the saddle point}\label{ss:sadtoo}
In this subsection we collect our information about the saddle points of $\ephi$ that were defined in the introduction. We begin by establishing proposition \ref{t:exus}, which uses the upper and lower bounds on the energy barrier to invoke a mountain pass theorem for the existence of $\us$ and $\ust$.

\begin{proof}[Proof of proposition \ref{t:exus}]
We recall from \cite[lemma A.2]{GW} that $\ephi$ is in $C^1(X_\phi;\R)$ and satisfies the Palais-Smale condition.
Using these conditions and a standard mountain pass argument (see for instance \cite[Corollary 1.8]{GW}), we can establish the existence of the saddle point $\us$ once we show that

 (i) There exists a function $u\in X_\phi$ with $\ephi(u)<\ephi(\um)$,

(ii) There exists $c>0$ such that, for any such $u$ and any continuous path $\psi$ with $\psi(0)=u$, $\psi(1)=\um$, there holds
\begin{align*}
  \sup_{t\in[0,1]}\ephi(\psi(t))\geq \ephi(\um)+c.
\end{align*}

The first condition is established via the upper bound construction from  proposition \ref{prop.upper.bound}, using the fact that $c_m$ is a local but not global minimum value of $f_\xi$ for $\xi\in (\tilde{\xi}_d,\xi_d)$. To confirm condition (ii), we define the unique point $\nu_-\neq \nu_m$ via
\begin{align*}
  f_\xi(\nu_-)=f_\xi(\nu_m),
\end{align*}
and observe that according to the lower bound \eqref{lower.bound}, there holds
\begin{align}
  \nu(u)\leq \nu_-+O(\phi^{1/3}).\label{newp}
\end{align}
On the other hand recall from \eqref{num} that $\nu(\um)\geq\nu_m+C\,\phi^{1/6}$. Hence a second application of the lower bound (and continuity of $\nu(\cdot)$) implies
\begin{align}
  \sup_{t\in[0,1]}\ephi(\psi(t))&\geq c_s+O(\phi^{1/3})\overset{\eqref{en.cl}}\geq\ephi(\um)+(c_s-c_m)+O(\phi^{1/3})\label{cmep2}\\
  &\geq\ephi(\um)+\frac{c_s-c_m}{2}\label{cmep}
\end{align}
for $\phi$  small.

Analogously, the existence of $\ust$ follows from a standard mountain pass argument once we show that

 (i) There exists a function $u\in X_\phi$ with $\ephi(u)<\ephi(\um)$,

(ii') There exists $c>0$ such that, for any such $u$, any $v\in \neps$, and any continuous path $\psi$ with $\psi(0)=u$, $\psi(1)=v$, there holds
\begin{align*}
  \sup_{t\in[0,1]}\ephi(\psi(t))\geq \ephi(v)+c.
\end{align*}

Hence it suffices to check (ii'). To this end, we observe that by the definition of $\neps$, \eqref{ump} and lemma \ref{l:bulk}, there exists $\tilde\eps>0$ such that for any $\eps \in (0,\tilde \eps)$ there holds $\nu(v)\geq (\nu_m+\nu_s)/2$ for small enough $\phi$. On the other hand as above, we observe that \eqref{newp} holds for any $u$ satisfying (i). It follows as above for any $\eps\leq (c_s-c_m)/4$ that
\begin{eqnarray}
    \sup_{t\in[0,1]}\ephi(\psi(t))&\overset{\eqref{cmep2}}\geq & c_s+O(\phi^{1/3})\label{cmep3}\\
    &\overset{\eqref{cmep}}\geq & \ephi(\um)+\frac{c_s-c_m}{2}
    \geq \ephi(v)-\eps+\frac{c_s-c_m}{2}\notag\\
    &\geq& \ephi(v)+\frac{c_s-c_m}{4}.\notag
\end{eqnarray}

It remains to bound the energy barriers. The lower bound on $\deone$ is implied by the first inequality in \eqref{cmep2}. Similarly, the lower bound  $\de\geq c_s+O(\phi^{1/3})$ is implied by \eqref{cmep3}. On the other hand, from  theorem \ref{t:exist} and the upper bound constructions in proposition \ref{prop.upper.bound}, we deduce the  upper bound  $\de\leq c_s+O(\phi^{1/3})$.
\end{proof}

As explained in the introduction, in order to find a function that satisfies the folkloric properties of the critical nucleus, we turn to the volume-constrained minimizer $\uws$. We begin by stating the analogy of lemma \ref{lc} for the sharp-interface saddle point $\Psi_s$. We apply the estimate in the proof of theorem \ref{t:saddle} and remark \ref{rem:appxmtn}. See also remark \ref{rem:pass} below.
\begin{lemma}[Finite $\phi$ estimate]\label{lc2}
Let $\gamma_0>0$ be as in \eqref{gm0}. For every $\gamma\in (0,\gamma_0]$, $\delta>0$, there exist $\phi_0>0$, $\beta_0>0$ such that for $\phi\leq \phi_0$, one has for all $u\in X_\phi$ that
\begin{align}
\left| \nu(u)-\nu_s\right|\leq \beta_0\; &\text{and}\; |u-\Psi_{s}|_{\tphil}\geq\gamma
 \quad\Rightarrow\quad \ephi(u)\geq \inf_{\substack{|u-\Psi_{s}|_{\R^d}\geq\gamma\\ \nu_0(u)=\nu_s}}\ez(u)-\delta.\label{C2}
\end{align}
\end{lemma}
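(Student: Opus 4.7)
The plan is to prove lemma \ref{lc2} by a compactness-and-contradiction argument paralleling the proof of lemma \ref{lc}. Denote the right-hand-side infimum in \eqref{C2} by
\[
I_\gamma := \inf_{\substack{|v-\Psi_s|_{\R^d} \geq \gamma \\ \nu_0(v) = \nu_s}} \ez(v),
\]
which by lemma \ref{la2} and the sharp quantitative isoperimetric inequality \eqref{Regular.Sharp.Isop} satisfies $I_\gamma \geq c_s + c_0 C(d)(\gamma^2/(4\nu_s))^2 {\rm P}_E(B_{\nu_s}) > c_s$. For $\beta>0$ introduce the relaxed infimum
\[
I_{\gamma,\beta} := \inf_{\substack{|v-\Psi_s|_{\R^d} \geq \gamma \\ \nu_0(v) \in [\nu_s-\beta,\nu_s+\beta]}} \ez(v).
\]
A standard perturbation (modify any near-optimal sharp-interface competitor of volume in $[\nu_s-\beta,\nu_s+\beta]$ by adding or removing a small ball of volume at most $\beta$ to achieve exact volume $\nu_s$, at perimeter cost $O(\beta^{(d-1)/d})$ and $L^2$ cost $O(\sqrt{\beta})$, absorbing the latter into a little slack in the distance constraint) yields $I_{\gamma,\beta} \to I_\gamma$ as $\beta\downarrow 0$. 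Given $\delta>0$, I would first fix $\beta_0>0$ so that $I_{\gamma,\beta_0} \geq I_\gamma - \delta/2$.

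Now suppose the conclusion fails for this $\beta_0$: there exist $\phi_n \downarrow 0$ and $u_n \in X_{\phi_n}$ with $|\nu(u_n)-\nu_s|\leq \beta_0$, $|u_n-\Psi_s|_{\mathbb{T}_{\phi_n L}} \geq \gamma$, and $\mathcal{E}_{\phi_n}^\xi(u_n)<I_\gamma-\delta$. By lemma \ref{l:bulk} each $u_n$ takes values near $\pm 1$ off a set of measure $O(\phi_n^{1/3})$, and the coarea identity together with the lower bound \eqref{lower.bound} from proposition \ref{lemma.lower.bound} supplies a uniform bound on $\per_{\mathbb{T}_{\phi_n L}}(\{u_n>t\})$ for each $t \in [-1+2\phi_n^{1/3}, 1-2\phi_n^{1/3}]$. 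Using the translation invariance of $\ephi$ and of $|\cdot|_{\mathbb{T}_{\phi_n L}}$, translate each $u_n$ so that $\{u_n>0\}$ is centered, and identify $u_n$ with a function on $\R^d$ (extended to equal $-1$ outside the centered fundamental domain). A concentration argument---combining the uniform perimeter bound with the sharp quantitative isoperimetric inequality \eqref{SharpIsoper2} and the lower bound $|u_n-\Psi_s|_{\mathbb{T}_{\phi_n L}} \geq \gamma$ to rule out splitting into far-apart components (any stray piece would waste perimeter incompatible with $\ephi(u_n)<I_\gamma-\delta\leq E_M$)---shows that $\{u_n>0\}$ has uniformly bounded support in $\R^d$. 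Standard Sobolev compactness then yields a subsequential limit $u_n \to v$ in $-1+L^p_{\rm loc}(\R^d)$ for every $p\in(1,\infty)$, with $v=-1+2\chi_\mathcal{S}$ and $\per(\mathcal{S})<\infty$.

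The three conditions pass to the limit: $|\mathcal{S}|\in[\nu_s-\beta_0,\nu_s+\beta_0]$, because $\nu(u_n)\to |\mathcal{S}|$ (by corollary \ref{cor.1} and dominated convergence); $|v-\Psi_s|_{\R^d} \geq \gamma$, by Fatou applied to the infimum over translations; and $\ez(v)\leq \liminf_n \ephi(u_n)$, by the $\Gamma$-liminf inequality of \cite{GW} (concretely, proposition \ref{lemma.lower.bound} combined with the lower semicontinuity of the perimeter). Hence $I_{\gamma,\beta_0} \leq \ez(v) \leq \liminf_n \ephi(u_n) \leq I_\gamma-\delta$, contradicting the choice of $\beta_0$. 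The main obstacle will be the concentration step: because the torus $\mathbb{T}_{\phi_n L}$ grows to $\R^d$, mass could \emph{a priori} escape to infinity or split into several far-apart components, and pinning down a single bounded limit set of the correct volume is what requires both the quantitative isoperimetric estimate and the distance lower bound $|u_n-\Psi_s|_{\mathbb{T}_{\phi_n L}} \geq \gamma$. This is precisely the technical hurdle addressed in the proof of lemma \ref{lc}, whose argument adapts to the present setting with $\Psi_s$ in place of $\Psi_m$ and the relaxed volume window $[\nu_s-\beta_0,\nu_s+\beta_0]$ in place of the $L^2$ ball of radius $\gamma_0$ around $\Psi_m$.
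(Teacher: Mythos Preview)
There is a genuine gap in two linked places. First, the claim that $|v-\Psi_s|_{\R^d}\geq \gamma$ passes to the limit ``by Fatou applied to the infimum over translations'' is the wrong direction: Fatou (or lower semicontinuity of the $L^2$ norm under $L^p_{\rm loc}$ convergence) gives $\|v-\Psi_s(\cdot-x_0)\|_{L^2}\leq \liminf_n\|u_n-\Psi_s(\cdot-x_0)\|_{L^2}$, which does \emph{not} yield a lower bound on $|v-\Psi_s|_{\R^d}$. The distance can genuinely drop in the limit; this is exactly the difficulty flagged in the discussion preceding the proof of lemma~\ref{lc}. Second, and relatedly, your concentration step is not what the proof of lemma~\ref{lc} actually provides. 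That proof does \emph{not} show that $\{u_\phi>0\}$ has uniformly bounded support or rule out splitting; it explicitly allows a ``deficit'' volume $\beta/4$ to drift off to infinity. The energy bound $\ephi(u_n)<I_\gamma-\delta$ alone does not preclude a small far-away piece: a stray ball of volume $\epsilon$ costs only $O(\epsilon^{(d-1)/d})$ in perimeter, and there is room for this under $I_\gamma-\delta$.

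The paper's argument sidesteps both issues simultaneously. Rather than forcing concentration, it tracks the escaped mass $\beta/4$ and shows (via the coarea formula and the isoperimetric inequality on $\tphil\setminus K$, exactly as in Step~4 of lemma~\ref{lc}) that $\liminf \ephi(u_\phi)$ dominates $c_0\per(\{u_0=1\})+\bar C_1(\beta/4)^{(d-1)/d}-4\nu_s+4\nu_s^2/\xi^{d+1}$. The volume constraint $\nu(u_\phi)\to\nu_s$ forces $|\{u_0=1\}|+\beta/4=\nu_s$. One then \emph{builds} a sharp-interface competitor $\tilde u$ equal to $u_0$ on $K$ together with a disjoint ball of volume $\beta/4$ placed far away, so that $\nu_0(\tilde u)=\nu_s$ exactly and $|\tilde u-\Psi_s|_{\R^d}=\gamma$ exactly (the centering argument from lemma~\ref{lc} shows the origin is optimal). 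This $\tilde u$ is admissible for $I_\gamma$ with $\ez(\tilde u)\leq\liminf\ephi(u_\phi)\leq I_\gamma-\delta$, and the contradiction follows without any relaxed infimum $I_{\gamma,\beta}$ or concentration step.
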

The proof of the lemma is given in subsection \ref{ss:pflemmas}.
\begin{remark}\label{rem:pass}
Notice that  lemma \ref{lc2} and \eqref{A2b} imply that any approximately optimal path for $\de$ stays within a $\gamma$ neighborhood of $\Psi_s$ for all volumes close to $\nu_s$. The mountain pass around $\Psi_s$ is ``narrow'' in this sense; see also remark \ref{rem:all}.
\end{remark}

We now turn to the proof of theorem \ref{t:saddle}, which establishes the existence and properties of the constrained minimizer $\uws$.
\begin{proof}[Proof of theorem \ref{t:saddle}]
We first note that the lower and upper bounds in propositions \ref{lemma.lower.bound} and \ref{prop.upper.bound}, and the fact that $\nu_s$ is the unique maximum of $f_\xi$ over $[0,\nu_m]$, imply that
\begin{align}
\omega_*=\nu_s+o(1).\label{ooo}
\end{align}
Next, for given $\gamma\in (0,\gamma_0]$ we use lemma \ref{la2} to identify $\delta>0$ such that
\begin{align}
      |u-\Psi_{s}|_{\R^d}\geq\gamma,\, \nu_0(u)=\nu_s\qquad\Rightarrow\qquad \ez(u)\geq c_s+2\delta.\label{ipp}
    \end{align}
With these $\gamma$, $\delta$, we extract from \eqref{C2} that for $\nu(u)=\omega_*$ (which by \eqref{ooo} is close to $\nu_s$) and $|u-\Psi_{s}|_{\tphil}\geq\gamma$, there holds
\begin{align*}
\ephi(u)\geq \inf_{\substack{|u-\Psi_{s}|_{\R^d}\geq\gamma\\ \nu_0(u)=\nu_s}}\ez(u)-\delta\overset{\eqref{ipp}}\geq c_s+\delta.
\end{align*}
On the other hand the constructions from proposition \ref{prop.upper.bound} and \eqref{ooo} yield a function $\hat{u}_\omega$ such that $\nu(\hat{u}_\omega)=\omega_*$ and
\begin{align*}
  \ephi(\hat{u}_\omega)\leq c_s+\frac{\delta}{2}.
\end{align*}
Minimality of $\uws$ yields \eqref{ump2}.

It remains to deduce  \eqref{enps} and \eqref{wmm2}. Combining the estimate of $\Ew$ and the definition of $\uws$ (see \eqref{ebjan2} and \eqref{omgst}) leads to \eqref{enps}.
To obtain \eqref{wmm2}, note that the definition of $\uws$, the bound \eqref{enps}, and proposition \ref{prop.upper.bound} imply
\begin{align}
f_\xi(\nu_s)+O(\phi^{1/3}) \leq \ephi(\uws) \leq f_\xi(\omega_*)+O(\phi |\ln \phi|),\label{star}
\end{align}
so that
\begin{equation}\label{t:saddle.1}
f_\xi(\omega_*)\geq f_\xi(\nu_s)+O(\phi^{1/3}).
\end{equation}
Given \eqref{ooo} we may apply  the Taylor formula  and $f_\xi'(\nu_s)=0$, $f_\xi''(\nu_s)<0$ to deduce from \eqref{t:saddle.1} that
\[
|\omega_*-\nu_s|\leq C \phi^{1/6},
\]
for $C=C(\xi,d)$, which is \eqref{wmm2}.

\end{proof}


\subsection{Deviation from sphericity}\label{ss:dev}

In this subsection we look more closely at the ``droplet-like-shape'' of the local minimizer $\um$ and the volume-constrained minimizers $\uo$ using
 the quantitative isoperimetric inequality.
The main ingredient for establishing the droplet-like shape of $\um$ and the volume-constrained minimizers is the following observation.
\begin{lemma}\label{frank1}
Consider $\xi\in (0,\xi_d]$ and the critical scaling \eqref{crit}. For any $\omega>0$ and for $\phi>0$ sufficiently small, the following holds. If $u\in X_\phi$ satisfies
\begin{align}
 \omega\leq \nu(u) \leq \frac{\xi^{d+1}}{2}\qquad\text{and}\qquad \ephi(u)\leq f_\xi(\nu(u))+O(\phi^{1/3}),\label{ein}
\end{align}
then for every $s\in[-1+2\phi^{1/3},1-2\phi^{1/3}]$, the Fraenkel asymmetry of the superlevel set $\{u>s\}$ satisfies
\begin{align}
\lambda(\{u>s\})\lesssim \frac{\phi^{\alpha}}{\nu(u)}\qquad\text{with}\quad \alpha=\min\{1/6,1/(2d)\}.\label{lamsize}
\end{align}
\end{lemma}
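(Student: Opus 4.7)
The plan is to convert the energy hypothesis into a bound on the isoperimetric defect functional $I(u)$ via proposition \ref{lemma.lower.bound}, and then apply the sharp quantitative isoperimetric inequality \eqref{SharpIsoper2} slice-by-slice in order to extract a pointwise bound on the Fraenkel asymmetry of each superlevel set.

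First I would combine the energy hypothesis with the lower bound \eqref{lower.bound} to deduce $I(u)\leq O(\phi^{1/3})$. Next, since each superlevel set $\{u>t\}$ with $t\in[-1+2\phi^{1/3},1-2\phi^{1/3}]$ has measure $\sim\nu(u)\lesssim 1$ by corollary \ref{cor.1}, while $|\tphil|=\xi^{d+1}/\phi\to\infty$, the smallness hypothesis of corollary \ref{SharpIsoper} is satisfied for $\phi$ small. Applying \eqref{SharpIsoper2} pointwise in $t$, multiplying by $\sqrt{2\tilde{G}(t)}$ and integrating in $t$, I obtain
\[
\int_{-1+2\phi^{1/3}}^{1-2\phi^{1/3}}\sqrt{2\tilde{G}(t)}\,\lambda(\{u>t\})^2\,{\rm P}_E(\{u>t\})\,dt \;\lesssim\; I(u)+\frac{1}{\phi L} \;\lesssim\; \phi^{1/3}+\phi^{1/d}\;\lesssim\;\phi^{2\alpha},
\]
using $|\{u>t\}|/(\phi L)\lesssim \phi^{1/d}$ with a $d$-only prefactor because $\xi\leq\xi_d$.

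To pass to a pointwise statement at the specified level $s$, I would restrict the integration to $t\in[-1/2,1/2]$, where $\sqrt{2\tilde{G}(t)}$ is bounded below by a positive $d$-only constant, and use $|\{u>t\}|\geq \nu(u)/2$ (valid for $\phi$ small since $\nu(u)\geq\omega$) to obtain ${\rm P}_E(\{u>t\})\gtrsim \nu(u)^{(d-1)/d}$. This yields $\int_{-1/2}^{1/2}\lambda(\{u>t\})^2\,dt\lesssim \phi^{2\alpha}/\nu(u)^{(d-1)/d}$. Lemma \ref{l:frel} controls $\lambda(\{u>s\})$ in terms of $\lambda(\{u>t\})$ up to an additive error $C\phi^{1/3}/\nu(u)$, so I split into two cases: if $\lambda(\{u>s\})\leq 2C\phi^{1/3}/\nu(u)$ the claim holds immediately since $\phi^{1/3}\leq \phi^\alpha$; otherwise $\lambda(\{u>t\})\geq \lambda(\{u>s\})/2$ throughout $[-1/2,1/2]$ and the integral bound becomes the pointwise bound $\lambda(\{u>s\})\lesssim \phi^\alpha/\nu(u)^{(d-1)/(2d)}$. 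Since $\nu(u)\leq\xi_d^{d+1}/2$, the factor $\nu(u)^{(d+1)/(2d)}$ is bounded by a $d$-only constant, so I can convert this to $\lambda(\{u>s\})\lesssim \phi^\alpha/\nu(u)$.

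The main technical obstacle is the careful bookkeeping required to keep all implicit constants $d$-only: the $1/(\phi L)$ torus-isoperimetric error term and the final conversion of $\nu(u)^{(d-1)/(2d)}$ to $\nu(u)$ in the denominator both rely on $\xi\leq\xi_d$. The exponent $\alpha=\min\{1/6,1/(2d)\}$ is dictated by the competition between the energy error $\phi^{1/3}$ and the torus-isoperimetric error $\phi^{1/d}$, and the case split via lemma \ref{l:frel} is what converts the $L^2$-in-$t$ bound into the pointwise-in-$s$ bound.
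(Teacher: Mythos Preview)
Your proposal is correct and follows essentially the same approach as the paper: both combine the lower bound from proposition~\ref{lemma.lower.bound} with the energy hypothesis to get $I(u)\lesssim\phi^{1/3}$, insert the quantitative isoperimetric inequality~\eqref{SharpIsoper2} into the integrand, and then use lemma~\ref{l:frel} to pass from the integral bound to a pointwise bound on $\lambda(\{u>s\})$. The only cosmetic differences are that the paper integrates over the full interval $[-1+2\phi^{1/3},1-2\phi^{1/3}]$ (using $|\{u>s\}|\geq\nu(u)$ there) rather than restricting to $[-1/2,1/2]$, and absorbs the $O(\phi^{1/3})/\nu(u)$ correction from lemma~\ref{l:frel} directly into the squared term rather than via your explicit case split.
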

\begin{proof}
The energy bound in \eqref{ein} and the lower bound \eqref{lower.bound} imply
\begin{align}
I(u)\lesssim\phi^{1/3},\label{ibd2}
\end{align}
where $I(\cdot)$ is the asymmetry cost defined in \eqref{DefnIsopTerm}.
The quantitative isoperimetric inequality \eqref{SharpIsoper2} applied to $\{u>s\}$ gives
\begin{eqnarray}
\lefteqn{\per_{\tphil}(\{u>s\})-{\rm P}_E(\{u>s\})}\notag \\
&\geq& C(d){\rm P}_E(\{u>s\})\lambda(\{u>s\})^2-\frac{4d|\{u>s\}|}{\phi L} \notag\\
&\overset{\eqref{crit},\eqref{saddle.superset}}=& C(d){\rm P}_E(\{u>s\})\lambda(\{u>s\})^2+O(\phi^{1/d})\notag.
\end{eqnarray}
Substituting into  \eqref{DefnIsopTerm} and applying the scaling bound \eqref{ibd2}, we obtain
\begin{align}\label{isop3}
\int_{-1+2\phi^{1/3}}^{1-2\phi^{1/3}}\sqrt{2\tilde{G}(s)}\,\,{\rm P}_E(\{u>s\})\,\lambda(\{u>s\})^2\,ds\lesssim \phi^{1/3}+\phi^{1/d}.
\end{align}
Applying lemma \ref{l:frel}  yields
\begin{align*}
\left(\sup_{s\in[-1+2\phi^{1/3},1-2\phi^{1/3}]}\lambda(\{u>s\})+\frac{O(\phi^{1/3})}{\nu(u)}\right)^2  &\int_{-1+2\phi^{1/3}}^{1-2\phi^{1/3}}\sqrt{2\tilde{G}(s)}\,\,{\rm P}_E(\{u>s\})\,ds\\
&\lesssim \phi^{1/3}+\phi^{1/d},
\end{align*}
so that
\begin{align*}
\sup_{s\in[-1+2\phi^{1/3},1-2\phi^{1/3}]}\lambda(\{u>s\})+\frac{O(\phi^{1/3})}{\nu(u)}
\lesssim \frac{\phi^{1/6}+\phi^{1/(2d)}}{\nu(u)^{(d-1)/(2d)}},
\end{align*}
where we have used $|\{u>s\}|\geq \nu(u)$ for all $s\in[-1+2\phi^{1/3},1-2\phi^{1/3}]$. The estimate \eqref{lamsize} follows.
\end{proof}

\begin{proof}[Proof of theorem \ref{t:spheres}]
We begin by establishing that $\um$ and $\uws$ satisfy \eqref{visam} and \eqref{visa}.  Notice that \eqref{visa} follows from \eqref{visam}, lemma \ref{l:l2fr}, \eqref{num}, and \eqref{wmm2}. Hence it suffices to establish \eqref{visam}. This will follow from an application of the previous lemma.
Indeed, for the minimizer, we have
\begin{eqnarray*}
  \ephi(\um)&\overset{\eqref{en.cl}}\leq & c_m+O(\phi^{1/3})\\
  &\leq& f_\xi(\nu(\um))+O(\phi^{1/3}),
\end{eqnarray*}
since \eqref{num} implies that $\nu(\um)$ is within a neighborhood for which $c_m$ is the minimum value of $f_\xi$. Hence \eqref{ein} is verified for $\um$. On the other hand, \eqref{star} and \eqref{ooo} verify \eqref{ein}  for $\uws$.


Lastly, we verify \eqref{uwtoo} for any volume-constrained minimizer with the help of the upper bound \eqref{upper.bound} and lemmas \ref{l:l2fr} and \ref{frank1}.
\end{proof}

Finally, we point out that $\um$ can be characterized as a constrained minimizer of appropriate volume.
\begin{lemma}\label{l:struc}
The local minimizer $\um$ is a constrained minimizer of the energy subject to a volume constraint in the sense that it minimizes $\ephi$ over all functions $u$ such that
\begin{align*}
  \nu(u)=\nu(\um).
\end{align*}
\end{lemma}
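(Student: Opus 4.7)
The strategy is to compare $\um$ to a volume-constrained minimizer at the same volume. Set $\omega := \nu(\um)$ and let $u_\omega$ denote any solution of the minimization problem defining $\ehat(\omega)$ in \eqref{ehat}; existence follows by the direct method. Since $\um$ itself satisfies $\nu(u) = \omega$, we get $\ephi(u_\omega) \leq \ephi(\um)$ for free, and the content of the lemma is the reverse inequality.

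To establish it, I would show that $u_\omega$ lies in the closed ball $\{u \in X_\phi \colon |u - \Psi_m|_{\tphil} \leq \gamma_0\}$ over which $\um$ minimizes by definition in theorem \ref{t:exist}. Two ingredients combine: first, by \eqref{num} we have $\omega = \nu_m + O(\phi^{1/6})$, so $\omega \in (0, \xi^{d+1}/2)$ and is bounded away from $0$ and from $\xi^{d+1}/2$ uniformly for small $\phi$; second, comparing the sharp-interface profiles $\Psi(\cdot;\omega)$ and $\Psi_m = \Psi(\cdot;\nu_m)$, an elementary calculation using the symmetric difference of balls of nearby volume yields
\[
|\Psi(\cdot;\omega) - \Psi_m|_{\tphil}^2 \lesssim |\omega - \nu_m| \lesssim \phi^{1/6}.
\]

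Next I would apply \eqref{uwtoo} from theorem \ref{t:spheres} to $u_\omega$, which gives
\[
|u_\omega - \Psi(\cdot;\omega)|_{\tphil}^2 \lesssim \frac{\phi^\alpha}{\omega} \lesssim \phi^\alpha,
\]
since $\omega$ is bounded below. The triangle inequality then produces $|u_\omega - \Psi_m|_{\tphil} \to 0$ as $\phi \to 0$, so for $\phi$ sufficiently small $u_\omega$ lies well inside the $\gamma_0$-ball around $\Psi_m$ and is admissible for the constrained minimization defining $\um$ in theorem \ref{t:exist}. This gives $\ephi(\um) \leq \ephi(u_\omega)$, and combined with the opposite inequality noted at the outset yields $\ephi(\um) = \ephi(u_\omega) = \ehat(\nu(\um))$, which is exactly the claim.

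I do not anticipate a substantive obstacle: the proof simply bootstraps from the quantitative closeness to sharp-interface droplets furnished by theorem \ref{t:spheres}, together with the variational characterization of $\um$ built into theorem \ref{t:exist}. The only care required is verifying that the constrained minimizer $u_\omega$ falls within the hypotheses of theorem \ref{t:spheres}, which is immediate from \eqref{num} and our standing assumption $\xi \in (\tilde{\xi}_d, \xi_d]$.
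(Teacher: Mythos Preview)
Your proposal is correct and follows essentially the same approach as the paper: both arguments show that a volume-constrained minimizer at volume $\nu(\um)$ lands inside the $\gamma_0$-ball around $\Psi_m$ by combining \eqref{num} with the quantitative $L^2$ estimate \eqref{uwtoo}, then invoke the variational characterization of $\um$ from theorem \ref{t:exist}. The only cosmetic difference is that the paper routes the triangle inequality through an intermediate profile $\Psi_{m,\phi}$ based on the superlevel-set measure, whereas you compare directly to $\Psi(\cdot;\omega)$; your version is slightly more direct.
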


\begin{proof}
We define the volume
\begin{align}
  \omega_\phi^m:=\nu(\um)\label{wmo}
\end{align}
and let $\umo$ be an associated constrained minimizer, i.e., a function that minimizes $\ephi$ subject to $\nu(u)=\omega_\phi^m$.
It suffices to show that for some $\gamma>0$ and for all $\phi$ sufficiently small, the constrained minimizer $\umo$ belongs to a $\gamma$ neighborhood of $\Psi_m$:
\begin{align}
  |\umo -\Psi_m|_{\tphil}< \gamma.\label{est.1}
\end{align}
Indeed, it then follows from the characterization of $\um$ as the minimizer over the $\gamma$-neighborhood that
\begin{align*}
  \ephi(\um)\leq \ephi(\umo)\leq \ephi(u)\quad\text{for all }u \text{ with }\nu(u)=\omega_\phi^m.
\end{align*}

We will now show that \eqref{est.1} holds. Let $\nu_{m,\phi}:=|\{\umo\geq 1-\phi^{1/3}\}|$ and let  $\Psi_{m,\phi}$ denote the sharp-interface profile with this droplet volume. By corollary \ref{cor.1} it follows that
\begin{equation}\label{est.2}
\nu_{m,\phi}=\omega_\phi^m+O(\phi^{1/3}).
\end{equation}

In light of \eqref{num} and \eqref{est.2}, there holds
\begin{align}
  ||\Psi_m-\Psi_{m,\phi}||_{L^2(\tphil)}< \frac{\gamma}{2},\label{psipsi}
\end{align}
so that, by the triangle inequality, it suffices to show that
\begin{align}
  |\umo -\Psi_{m,\phi}|_{\tphil}< \frac{\gamma}{2},\label{est.1b}
\end{align}
which in turn follows from \eqref{uwtoo} and \eqref{est.2}.
\end{proof}
\subsection{Proofs of lemmas}\label{ss:pflemmas}
We now present the proofs of lemmas \ref{lc} and \ref{lc2}.
Lemma \ref{lc} establishes a link between the finite $\phi$ energy of functions that are $\gamma$ away from $\Psi_m$ and the limit energy of the same set of functions.
In contrast to the lower bound in the $\Gamma$-convergence proof in \cite{GW}, (i) we want an estimate that is uniform over $|u-\Psi_m|_{\R^d}=\gamma$ (rather than just an estimate for any given point $u_0$ in this set), and (ii) we \emph{do not assume} $L^2$ convergence to some function $u_0$ in the sense that $$\int_{\tphil}(u_\phi- u_0)^2\,dx\to 0.$$

Roughly speaking, the issue that arises is that, although there is a function $u_0=\pm 1$ such that $u_\phi\to u_0$ in $L^2(K)$ for any compact set $K$ (cf. lemma \ref{l:mm} below), it may be that the $L^2$ distance to $\Psi_m$ drops in the limit, i.e., that $||u_0-\Psi_m||_{L^2(R^d)}<\gamma$ even though $||u_\phi-\Psi_m||_{L^2(\tphil)}=\gamma$ for every $\phi>0$. The volume costs are straightforward, but we need a good bound on the perimeter cost.
What we establish in the proof below is that there is ``no free lunch'' in the sense that, on the one hand, $\ephi(u_\phi)$ includes the full perimeter cost of $u_0$ on $K$ and, on the other hand, if some of $\{u_\phi\sim 1\}$---roughly speaking the volume $\beta/4$ in the proof below---has drifted off to infinity in the limit, then $\ephi(u_\phi)$ also includes the associated perimeter cost of this mass, at least in the sense of
$$\left(\frac{\beta}{4}\right)^{(d-1)/d}.$$
This is enough to conclude.

We begin by establishing $L^2$ convergence on compact sets. The argument is standard but we include it for completeness.
\begin{lemma}\label{l:mm}
Fix $\xi\in (\tilde{\xi}_d,\xi_d]$ and the critical scaling \eqref{crit}. Let $u_\phi\in X_\phi$ be a sequence of functions such that
\begin{align}
  \ephi(u_\phi) \lesssim 1.\label{hypo}
\end{align}
Then there exist $u_0=\pm 1$ a.e.\ on $\R^d$ and a subsequence of $\{u_\phi\}_{\phi >0}$ such that for any compact set $K\subset\R^d$
\begin{align*}
  u_\phi \mathbf{1}_K\to u_0 \mathbf{1}_K\quad\text{in }L^2(K).
\end{align*}
\end{lemma}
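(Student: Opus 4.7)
The plan is the standard Modica--Mortola compactness argument, adapted to the fact that the domain $\tphil$ grows to all of $\R^d$ as $\phi \to 0$. First I would extract from the energy bound the two essential a priori estimates. Because the mean constraint forces $\int_{\tphil}(u_\phi - (-1+\phi))\,dx = 0$, the linear term in $e_\phi$ integrates to zero, and \eqref{f2} combined with $\ephi(u_\phi)\lesssim 1$ yields
\begin{equation*}
\int_{\tphil}\frac{\phi}{2}|\nabla u_\phi|^2\,dx \,+\, \int_{\tphil}\frac{G(u_\phi)}{\phi}\,dx \,\lesssim\, 1.
\end{equation*}
Introducing the Modica--Mortola primitive $W(t):=\int_{-1}^{t}\sqrt{2G(s)}\,ds$ (so $W$ is nondecreasing, $C^{1}$, with $W(\pm 1)=\{0,c_0\}$), the AM--GM inequality gives
\begin{equation*}
\int_{\tphil}|\nabla W(u_\phi)|\,dx \,=\, \int_{\tphil}\sqrt{2G(u_\phi)}\,|\nabla u_\phi|\,dx \,\leq\, \int_{\tphil}\frac{\phi}{2}|\nabla u_\phi|^2 + \frac{G(u_\phi)}{\phi}\,dx \,\lesssim\, 1.
\end{equation*}

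Second, I would fix an arbitrary compact $K\subset\R^d$; since $\phi L\to\infty$, for $\phi$ small enough $K\subset\tphil$. I would then bound $\|W(u_\phi)\|_{L^1(K)}$ by splitting into $\{|u_\phi|\le 2\}$, where $W(u_\phi)$ is uniformly bounded, and $\{|u_\phi|>2\}$, where $|W(u_\phi)|\lesssim|u_\phi|^3$ is controlled by H\"older's inequality against the bound $\int G(u_\phi)\lesssim\phi$ (using the quartic growth $G(u)\gtrsim u^4$ for large $|u|$ built into the hypotheses on $G$). Hence $W(u_\phi)$ is uniformly bounded in $BV(K)$, so by the Rellich--Kondrachov type compactness for $BV$, a subsequence $W(u_{\phi_n})$ converges in $L^1(K)$ and a.e.\ on $K$ to some $w_K$. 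Simultaneously, $\int_K G(u_\phi)\le\int_{\tphil}G(u_\phi)\lesssim\phi\to 0$, so along a further subsequence $G(u_{\phi_n})\to 0$ a.e.\ on $K$. Since $G^{-1}(0)=\{\pm1\}$, we deduce $u_{\phi_n}\to u_0^K$ a.e.\ on $K$ with $u_0^K\in\{\pm 1\}$ (and consistency $w_K=W(u_0^K)$ a.e.).

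Third, to upgrade from a.e.\ to $L^2(K)$ convergence, I would verify equi-integrability of $u_{\phi_n}^2$. On $\{|u_{\phi_n}|>M\}\cap K$ the growth $G(u)\gtrsim u^2$ (available for large $M$) gives
\begin{equation*}
\int_{K\cap\{|u_{\phi_n}|>M\}}u_{\phi_n}^2\,dx \,\lesssim\, \int_{\tphil}G(u_{\phi_n})\,dx \,\lesssim\, \phi_n \,\to\, 0,
\end{equation*}
while on $\{|u_{\phi_n}|\le M\}$ the sequence is uniformly bounded and a.e.\ convergence upgrades to $L^2$ by dominated convergence. Combining the two regions yields $u_{\phi_n}\mathbf{1}_K\to u_0^K\mathbf{1}_K$ in $L^2(K)$. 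A standard diagonal extraction over the exhaustion $K_n := \overline{B_n(0)}$ then produces a single subsequence and a single measurable $u_0:\R^d\to\{\pm 1\}$ valid for every compact $K$, by uniqueness of a.e.\ limits on overlapping compacts.

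The main obstacles are bookkeeping rather than analytical: one must verify that $W(u_\phi)$ is $L^1$-controlled on compacts despite the superlinear growth of $W$ (handled by the $G\gtrsim u^4$ bound plus $\int G\lesssim\phi$), and one must organize the subsequence extraction coherently across compacts (handled by diagonalization). The growing domain is not itself an issue, because only compact subsets are tested and each such subset is eventually contained in $\tphil$.
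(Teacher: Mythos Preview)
Your proposal is correct and follows essentially the same route as the paper: both use the Modica--Mortola trick to bound $W(u_\phi)$ in $BV$ on compact sets, invoke $BV$ compactness to extract an $L^1$-convergent subsequence, upgrade to $L^2$ via equi-integrability coming from the quartic growth of $G$, and run a diagonal argument over an exhaustion of $\R^d$. The only cosmetic differences are that the paper passes from $L^1$ convergence of $W(u_\phi)$ to convergence in measure of $u_\phi$ via the uniform continuity of $W^{-1}$ (you instead pass to an a.e.\ subsequence and use continuity of $W^{-1}$), and the paper identifies $u_0\in\{\pm1\}$ afterwards via Fatou's lemma (you do it during the extraction via $G(u_{\phi_n})\to 0$ a.e.).
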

\begin{proof}
Consider a compact set $K\subset \mathbb{R}^d$ and note that, for $\phi$ small enough so that $K\subset \tphil$, we have
\begin{align}
\ephi (u_\phi)&=\int_{\tphil}\dfrac{\phi}{2}|\nabla u_\phi|^2+\dfrac{1}{\phi}G(u_\phi)\,dx-\frac{G(-1+\phi)}{\phi}|\tphil| \notag \\
&\geq \int_K\dfrac{\phi}{2}|\nabla u_\phi|^2+\dfrac{1}{\phi}G(u_\phi)\,dx-\frac{G(-1+\phi)}{\phi}|\tphil| \notag \\
&\geq \int_K \sqrt{2G(u_\phi)}|\nabla u_\phi|-\frac{G(-1+\phi)}{\phi}|\tphil| \notag \\
&=\int_K |\nabla F(u_\phi)|\,dx-\frac{G(-1+\phi)}{\phi}|\tphil|,\label{var1}
\end{align}
where $F(t):=\int_{-1}^t\sqrt{2G(s)}\,ds$. From
\eqref{var1} together with \eqref{hypo} and \eqref{f2},
we deduce
\begin{equation}\label{variation2}
\sup _{\phi>0}\int_K |\nabla F(u_\phi)|\,dx <\infty.
\end{equation}
On the other hand, since $G(s)\sim s^4$ for large values of $s$, we have
\begin{equation}\label{variation3}
\sup_{\phi >0}\int_K |F(u_\phi)|\,dx\leq C_K \left(1+ \sup _{\phi >0}\int_K G(u_\phi)\,dx\right)<\infty.
\end{equation}
By \eqref{variation2} and \eqref{variation3} it follows that $\{F(u_\phi)\}_{\phi >0}$ is bounded in $BV(K)$. Consequently,
there exists $w_0\in L^1(K)$ and a subsequence of $\{u_\phi\}_{\phi >0}$ such that $F(u_\phi)\to w_0$ in $L^1(K)$. By the uniform continuity of $F^{-1}$ on $\mathbb{R}$ it follows that $u_\phi$ converges in measure on $K$ to $u_0:=F^{-1}(w_0)$. Moreover, by the second inequality in \eqref{variation3} we have $\sup_{\phi >0}\|u_\phi\|_{L^4(K)}<\infty$; hence the family $\{u_\phi^2\}_{\phi >0}$ is uniformly integrable on $K$. We thus deduce that $\|u_\phi-u_0\|_{L^2(K)}\to 0$.

Using an expanding sequence of compact sets $\{K_n\}_{n\geq 1}$ with $\cup_{n\geq1} K_n=\R^d$ and a diagonal argument, one can define $u_0$ on all of $\R^d$ so that
\begin{align*}
  u_\phi \mathbf{1}_K\to u_0 \mathbf{1}_K\quad\text{in }L^2(K),
\end{align*}
for any compact set $K\subset\R^d$.

Finally, we check that $u_0=\pm 1$ a.e. on $\R^d$. Indeed, for any compact set $K\subset \R^d$ we have
\begin{align}
\ephi(u_\phi)\geq  \frac{1}{\phi}\int_K G(u_\phi)\,dx-\frac{G(-1+\phi)}{\phi}|\tphil|,
\end{align}
so that by Fatou's lemma, \eqref{f2}, the energy bound \eqref{hypo}, and the fact that (up to a subsequence) $u_\phi$ converges a.e. to $u_0$ in $K$, we have
\begin{equation}
\int_KG(u_0)\,dx \leq \liminf_{\phi \to 0}\int_KG(u_\phi)\,dx
\leq \liminf_{\phi \to 0}\left(\phi\, \ephi (u_\phi)\right)=0.
\end{equation}
\end{proof}

\begin{proof}[Proof of lemma \ref{lc}: Finite $\phi$ estimate for the local minimum]
The proof is by contradiction. Hence, assume for a contradiction that there exists $\tilde\gamma\in(0,\gamma_0]$ and $\delta>0$ such that there exists a sequence $\phi\downarrow 0$ and a corresponding sequence of functions $u_\phi\in X_\phi$ such that
\begin{align}
&|u_\phi-\Psi_m|_{\tphil}= \gamma_\phi\in[\tilde\gamma,\gamma_0],\notag\\
\qquad\text{and}\qquad &\ephi(u_\phi)\leq \inf_{\gamma\leq|u-\Psi_m|_{\R^d} \leq\gamma_0}\ez(u)-2\delta.\notag
\end{align}
Without loss of generality, we may assume that
\begin{align}
\gamma_\phi\to \gamma\in[\tilde\gamma,\gamma_0].\label{gp2}
\end{align}
By translating on the torus, we will always assume that
\begin{align*}
  \inf_{x_0\in\tphil} ||u_\phi-\Psi_m(\cdot-x_0)||_{L^2(\tphil)}=||u_\phi-\Psi_m(\cdot-0)||_{L^2(\tphil)}.
\end{align*}
In addition, by density of smooth functions in $X_\phi$, there exists a sequence of $C^\infty$ functions $\tilde{u}_\phi\in X_\phi$ such that
\begin{align}
&|\tilde{u}_\phi-\Psi_m|_{\tphil}= \gamma+o(1)_{\phi\downarrow 0},\label{g}\\
\qquad\text{and}\qquad &\ephi(\tilde{u}_\phi)\leq \inf_{\gamma\leq|u-\Psi_m|_{\R^d} \leq\gamma_0}\ez(u)-\delta.\label{eless}
\end{align}
In the remainder of the proof we will work with this smooth sequence (and for notational simplicity, we will write $u_\phi$ instead of $\tilde{u}_\phi$).

\medskip

\underline{Step 1: Preliminary bounds.}
By comparison with radial constructions, it is easy to check that
\begin{align}
  \inf_{\gamma\leq|u-\Psi_m|_{\R^d} \leq\gamma_0}\ez(u)\leq c_s \quad\text{for }\xi\in (\tilde{\xi_d},\xi_d],\label{esmo}
\end{align}
so that in particular our sequence satisfies
\begin{align}
  \ephi(u_\phi)\leq E_M,\label{edo}
\end{align}
where we recall the definition of $E_M$ in \eqref{emm}. Proposition \ref{lemma.lower.bound}, lemma \ref{nobigsets} and corollary \ref{cor.1} imply
\begin{equation}\label{cone}
|\cald(u_\phi)|\lesssim 1.
\end{equation}
By applying lemma \ref{l:mm} we deduce that there exists $u_0=\pm 1$ a.e.\ such that (up to subsequences) $u_\phi\to u_0$ in $L^2(K)$ for any compact set $K$.
Arguing as in \cite[theorem 1.9]{GW}, we obtain for $u_0$ the bounds
\begin{align}
  |\{u_0=1\}|\lesssim 1,\qquad \per( \{u_0=1\})\lesssim 1.\label{cimit}
\end{align}
Moreover, since a set with bounded volume and perimeter can be well approximated by a smooth, open, bounded set (cf. \cite[Remark 13.12]{Ma}), an approximation argument similar to the one used in the proof of \cite[theorem 1.9]{GW} allows us to assume that $\{u_0=1\}$ is bounded.

\underline{Step 2: Estimates on a compact set.}
Let
\begin{align}
K=[-k,k]^d\label{K}
\end{align}
be a compact $d$-dimensional cube that compactly contains $\{u_0=1\}$ and $B_{\nu_m}(0)$. (Note for future reference that $\Psi_m=-1$ on $K^c$.) Because of
\begin{align*}
  u_\phi\to u_0 \quad\text{in }L^2(K),
\end{align*}
we have, according to \eqref{g}, that
\begin{align}
  ||u_\phi-\Psi_m||_{L^2(K)}^2\to ||u_0-\Psi_m||_{L^2(K)}^2=\gamma^2-\beta\quad\text{for some }\beta\in [0,\gamma^2].\label{l2s}
\end{align}
In addition, obtaining from lemma \ref{l:bulk} that
\begin{align}
|\calc(u_\phi)|,\,|\cale(u_\phi)|\to 0,\label{bdz}
\end{align}
we observe that
\begin{align}
\intfil \chi_3(u_\phi)\mathbf{1}_K\,dx\to |\{u_0=1\}|.\label{nuk}
\end{align}

\underline{Step 3: The deficit.}
In view of \eqref{g} and \eqref{l2s}, there holds
\begin{align}
  ||u_\phi-\Psi_m||_{L^2(\tphil\setminus K)}^2=||u_\phi-(-1)||_{L^2(\tphil\setminus K)}^2=\beta+o(1)_{\phi\downarrow 0}.\label{betabig}
\end{align}
On the other hand, in view of lemma \ref{l:bulk}, we have
\begin{align*}
  \intfil (u_\phi-(-1))^2\mathbf{1}_{\tphil\setminus K}\mathbf{1}_{\cala\cup\calb\cup\calc\cup\cale}\,dx=o(1)_{\phi\downarrow 0},
\end{align*}
so that \eqref{betabig} improves to
\begin{align}
  \intfil (u_\phi-(-1))^2\mathbf{1}_{\tphil\setminus K}\mathbf{1}_{\cald}\,dx=\beta+o(1)_{\phi\downarrow 0}.\label{bet2}
\end{align}
From \eqref{bet2} we read off
\begin{align}
 \left |\cald(u_\phi)\cap \left(\tphil\setminus K\right)\right|\to \frac{\beta}{4},\label{deficit}
\end{align}
which because of \eqref{bdz} we can also express as
\begin{align}
  \intfil \chi_3(u_\phi)\mathbf{1}_{\tphil\setminus K}\,dx=\frac{\beta}{4}+o(1)_{\phi\downarrow 0}.\label{deficit2}
\end{align}

\underline{Step 4: Total energy.}
We now calculate the cost associated to the sequence $\{u_\phi\}$. Combining \eqref{nuk} and \eqref{deficit2} gives
\begin{align}
  \nu(u_\phi)= |\{u_0=1\}|+\frac{\beta}{4}+o(1)_{\phi\downarrow 0}.\label{vol}
\end{align}
Estimating the energy as in the proof of \cite[proposition 2.4]{GW}, we obtain
\begin{align}
  \intfil e_\phi(u_\phi)\chi_3(u_\phi)\,dx&\geq -(4+o(1)_{\phi\downarrow 0})\nu(u_\phi),\notag\\
  \intfil e_\phi(u_\phi)\chi_1(u_\phi)\,dx&\geq (4+o(1)_{\phi\downarrow 0})\frac{\nu(u_\phi)^2}{\xi^{d+1}}.\label{fir}
\end{align}

The perimeter cost is more involved. For the contribution corresponding to $\chi_2$, we split the integral over the compact set $K$ (cf.\ \eqref{K}) and $\tphil\setminus K$.
The convexity of $G$ near $-1$, expressed in the form
\begin{align}
0=G(-1)&\geq G(-1+\phi)+G'(-1+\phi)(-1-(-1+\phi))\notag\\
&=G(-1+\phi)-\phi G'(-1+\phi),\label{mconv1}
\end{align}
implies that
\begin{align}
e_\phi(u)\geq \frac{1}{\phi} \Big(G(u)-G'(-1+\phi)(u+1) \Big)\geq 0\quad\text{on }\calc.\label{eb}
\end{align}
Hence we can replace $\chi_2$ by $\chi_2^\eta$ (where the support is on $(-1+\eta,1-\eta)$) for fixed $\eta>0$. On $K$, we use the $L^2(K)$ convergence and argue  as in the proof of \cite[theorem 1.9]{GW} to deduce
\begin{align}
  \intfil e_\phi(u_\phi)\chi_2(u_\phi)\mathbf{1}_K\,dx& \geq \intfil e_\phi(u_\phi)\chi_2^\eta(u_\phi)\mathbf{1}_K\,dx\notag\\
    &\geq \Big(c_0+o(1)_{\eta\downarrow 0}\Big)\big({\rm Per_{K^\circ}}(\{u_0=1\})+o(1)_{\phi\downarrow 0}\big),\notag\\
  &\geq \Big(c_0+o(1)_{\eta\downarrow 0}\Big)\big({\rm Per}(\{u_0=1\})+o(1)_{\phi\downarrow 0}\big),\label{pert1}
\end{align}
where we have recalled that $\{u_0=1\}$ is compactly contained in $K$.

On $\tphil\setminus K$, on the other hand, we use the coarea formula as in the proof of \cite[proposition 2.1]{GW} to argue that
\begin{align}
\lefteqn{\intfil e_\phi(u_\phi)\chi_2(u_\phi)\mathbf{1}_{\tphil\setminus K}\,dx}\notag\\
&\geq \int_{-1+2\eta}^{1-2\eta}\sqrt{2\tilde{G}(s)}\mathcal{H}^{d-1}(\{x\in\tphil\setminus K\colon u_\phi(x)=s\})
\,ds\notag \\
&\geq \underset{s\in(-1+2\eta,1-2\eta)}\einf\mathcal{H}^{d-1}(\{x\in\tphil\setminus K\colon u_\phi(x)=s\})   \int_{-1+2\eta}^{1-2\eta}\sqrt{2\tilde{G}(s)}
\,ds\notag\\
&\geq  \left(\mathcal{H}^{d-1}\left(\{x\in\tphil\setminus K\colon u_\phi(x)=s_\phi\}\right)-\phi\right) \int_{-1+2\eta}^{1-2\eta}\sqrt{2\tilde{G}(s)}
\,ds
\label{firp}
\end{align}
where we have chosen $s_\phi \in [-1+2\eta,1-2\eta]$ to approximate the essential infimum.
Notice that we may in addition without loss of generality assume that
\begin{align*}
 \mathcal{H}^{d-1}(\{x\in\tphil\setminus K\colon u_\phi(x)=s_\phi\})<\infty,
\end{align*}
since otherwise \eqref{firp}, \eqref{fir}, and \eqref{edo} lead to a contradiction.
We would like to pass from the level surface on the right-hand side of \eqref{firp} to a closed level surface on the torus. To do so, we allow an extra degree of freedom. Namely, we introduce the hypercubes and corresponding surfaces
\begin{align}
 K_\ell=[-\ell,\ell]^d,\qquad S_\ell:=\partial K_\ell,\qquad \ell\in [k,2k].\label{cubes}
\end{align}
Trivially, we estimate
\begin{align}
\lefteqn{ \mathcal{H}^{d-1}(\{x\in\tphil\setminus K\colon u_\phi(x)=s_\phi\})}\notag\\
&\geq \mathcal{H}^{d-1}(\{x\in\tphil\setminus K_\ell\colon u_\phi(x)=s_\phi\}),\quad \ell\in [k,2k].\label{triv1}
\end{align}
On the other hand, for $\ell\in[k,2k]$
we can relate $\{u_\phi>s_\phi\}\cap \left(\tphil\setminus K_\ell\right)$ and the surface measure on the right-hand side of \eqref{triv1} via
\begin{align}
 \lefteqn{\mathcal{H}^{d-1}\left(\partial\left(\{u_\phi>s_\phi\}\cap \big(\tphil\setminus K_\ell\big)\right)\right)}\notag\\
 &=\mathcal{H}^{d-1}(\{x\in\tphil\setminus K_\ell\colon u_\phi(x)=s_\phi\})+\mathcal{H}^{d-1}\left(\{u_\phi>s_\phi\}\cap S_\ell\right).\label{volin}
\end{align}
It remains to argue that the second term on the right-hand side is small. To do so, we will exploit the degree of freedom allowed by $\ell$ in the form of the following lemma.

\begin{lemma}\label{l:sa}
Consider the cubes and surfaces defined in \eqref{cubes}.
Consider a set $E\subset \R^d$ such that $|E\cap (K_{2k}\setminus K)|\leq \eps/2.$
Then there exists $\ell_*\in [k,2k]$ such that
\begin{align}
0\leq \mathcal{H}^{d-1}(E\cap S_{\ell_*})\leq \frac{\eps}{k}.
\end{align}
\end{lemma}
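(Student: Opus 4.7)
The plan is to prove this by a coarea/Fubini-type slicing argument, using the fact that the surfaces $S_\ell$ foliate the annular region $K_{2k}\setminus K$ as $\ell$ ranges over $[k,2k]$. The averaging principle then guarantees a good slice.

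First I would introduce the Lipschitz function $f(x):=\|x\|_{\infty}=\max_{1\leq i\leq d}|x_i|$, and observe that $S_\ell=f^{-1}(\ell)$ for every $\ell>0$, and that $\{x:k\leq f(x)\leq 2k\}=K_{2k}\setminus K$. Since at a.e.\ point exactly one coordinate $|x_i|$ achieves the maximum strictly, $f$ is differentiable at that point with $|\nabla f(x)|=1$.

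Applying the coarea formula to $f$ restricted to the set $E\cap (K_{2k}\setminus K)$ then yields
\begin{equation*}
\int_{k}^{2k}\mathcal{H}^{d-1}\bigl(E\cap S_\ell\bigr)\,d\ell
=\int_{E\cap (K_{2k}\setminus K)}|\nabla f(x)|\,dx
=\bigl|E\cap(K_{2k}\setminus K)\bigr|\leq\frac{\eps}{2}.
\end{equation*}
Since the left-hand side is an integral over an interval of length $k$ of a nonnegative function, there must exist some $\ell_*\in[k,2k]$ at which the integrand is at most its average, namely
\begin{equation*}
0\leq\mathcal{H}^{d-1}(E\cap S_{\ell_*})\leq\frac{1}{k}\cdot\frac{\eps}{2}\leq\frac{\eps}{k},
\end{equation*}
which is the desired conclusion.

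The only subtlety is justifying the coarea formula for the function $f(x)=\|x\|_\infty$, which is not $C^1$ but only Lipschitz; however, Lipschitz regularity is exactly the hypothesis needed for the classical coarea formula (cf.\ Federer), so there is no real obstacle. If one prefers to avoid invoking coarea in this form, one can give a direct Fubini-style argument by decomposing $K_{2k}\setminus K$ into the $2d$ pyramidal pieces on which one coordinate $\pm x_i$ equals $\|x\|_\infty$, and on each such piece integrate in the radial coordinate $\ell=|x_i|$, with the remaining $d-1$ coordinates parametrizing the corresponding face of $S_\ell$; summing gives the same identity.
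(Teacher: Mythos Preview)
Your proof is correct and takes essentially the same approach as the paper, which simply states that the lemma ``follows from writing the volume as an integral of surface area and considering the infimum.'' Your use of the coarea formula for the Lipschitz function $f(x)=\|x\|_\infty$ (with $|\nabla f|=1$ a.e.) makes this precise, and in fact yields the slightly stronger bound $\eps/(2k)$.
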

The proof of lemma \ref{l:sa} follows from writing the volume as an integral of surface area and considering the infimum.
We apply lemma \ref{l:sa} to the set $E=\{u_\phi>s_\phi\}$, noting that the strong $L^2$ convergence of $u_\phi$ to $-1$ on $K_{2k}\setminus K$ yields
\begin{align*}
 |\{u_\phi\geq s_\phi\}\cap (K_{2k}\setminus K)|=o(1)_{\phi\downarrow 0}.
\end{align*}

Combining \eqref{triv1}, \eqref{volin}, and lemma \eqref{l:sa} with $E=\{u_\phi> s_\phi\}$, we obtain
\begin{align}
\lefteqn{ \mathcal{H}^{d-1}(\{x\in\tphil\setminus K\colon u_\phi(x)=s_\phi\})}\notag\\
&\geq \mathcal{H}^{d-1}\left(\partial\left(\{u_\phi>s_\phi\}\cap \big(\tphil\setminus K_{\ell_*}\big)\right)\right)-\mathcal{H}^{d-1}\left(\{u_\phi> s_\phi\}\cap S_{\ell_*}\right)\notag\\
&\geq \mathcal{H}^{d-1}\left(\partial\left(\{u_\phi>s_\phi\}\cap \big(\tphil\setminus K_{\ell_*}\big)\right)\right)+o(1)_{\phi\downarrow 0}.
  \label{subii}
\end{align}

The final ingredient that we need is the isoperimetric inequality on the torus \eqref{Regular.Isop.Torus2}, which we apply to $\{u_\phi> s_\phi\}\cap (\tphil\setminus K_{\ell_*})$,
recalling the bound on this set implied by \eqref{bdz} and \eqref{deficit}. Using \eqref{Regular.Isop.Torus2} in
 \eqref{subii} and substituting the result in \eqref{firp} leads to
\begin{align}
\lefteqn{\intfil e_\phi(u_\phi)\chi_2(u_\phi)\mathbf{1}_{\tphil\setminus K}\,dx}\notag\\
  &\geq \Big(\bar{C}_1+o(1)_{\eta\downarrow 0}\Big)\left(|\cald(u_\phi)\cap (\tphil\setminus K_{\ell_*})|\right)^{(d-1)/d}+o(1)_{\phi\downarrow 0}.\label{pert2}
\end{align}
Moreover we can improve from \eqref{deficit} to
\begin{align}
\left |\cald(u_\phi)\cap \left(\tphil\setminus K_{\ell_*}\right)\right|\to \frac{\beta}{4},\label{deficitb}
\end{align}
using the $L^2$ convergence of $u_\phi$ to $-1$ on $K_{2k}\setminus K$.

Adding \eqref{fir}, \eqref{pert1}, and \eqref{pert2} and using \eqref{deficitb} and \eqref{vol} to pass to the limit (first in $\phi$ and then in $\eta$) leads to
\begin{align}
 \lefteqn{ \liminf_{\phi\to 0} \ephi(u_\phi)}\notag\\
 &\geq c_0{\rm Per}(\{u_0=1\})+\bar{C}_1\left(\frac{\beta}{4}\right)^{(d-1)/d}-4\left( |\{u_0=1\}|+\frac{\beta}{4}\right)+4
 \frac{\left( |\{u_0=1\}|+\frac{\beta}{4}\right)^2}{\xi^{d+1}}.\label{uh}
\end{align}

\underline{Step 5: Derivation of a contradiction.}
We now observe that the right-hand side of \eqref{uh} is exactly the energy of the function $\tilde{u}$ defined as follows.
Let $\tilde{u}=u_0$ on $K$. Setting $K_{R}:=\{x\in\R^d\colon {\rm dist}(x,K)\leq R\}$, let $\tilde{u}=+1$ on a disk of volume $\beta/4$ in $\R^d\setminus K_{2R}$, and $\tilde{u}=-1$ otherwise. Here we choose $R$ big enough so that $K_{2R}\setminus K$ contains all balls of volume $\nu_m$ whose centers lie on $\partial K_R$.
The function $\tilde{u}$ so defined satisfies
\begin{align}
  ||\tilde{u}-\Psi_m||_{L^2(\R^d)}^2=||u_0-\Psi_m||_{L^2(K)}^2+4\left(\frac{\beta}{4}\right)\overset{\eqref{l2s}}=\gamma^2.\label{gamfar}
\end{align}
Moreover, we claim that $\Psi_m$ is optimal for $\tilde{u}$ in the sense that
\begin{align}
|\tilde{u}-\Psi_m|_{\R^d}=  ||\tilde{u}-\Psi_m||_{L^2(\R^d)},\label{gamfar2}
\end{align}
so that \eqref{gamfar} improves to
\begin{align}
|\tilde{u}-\Psi_m|_{\R^d}= \gamma.\label{gamfar3}
\end{align}
Indeed, we have on the one hand that for any $x_0\in\R^d$ such that $\Psi_m(\cdot-x_0)$ has $\{\Psi_m(\cdot-x_0)=1\}\cap K=\emptyset$, there holds
\begin{eqnarray*}
 ||\tilde{u}-\Psi_m(\cdot-x_0)||_{L^2(\R^d)}^2&\geq& 4\left( |\{u_0=1\}|+\nu_m-\frac{\beta}{4}\right)\\
 &\geq & 4\left(\nu_m-|\{u_0=-1,\,\Psi_m=1\}|+\nu_m-\frac{\beta}{4}\right)\\
 &\overset{\eqref{l2s}}\geq &4\left(\nu_m-\frac{\gamma^2}{4}+\frac{\beta}{4}+\nu_m-\frac{\beta}{4}\right)\\
 &=&4\left(2\nu_m-\frac{\gamma^2}{4}\right)\overset{\eqref{gm0}}\geq 4\Big(2\nu_m-(\nu_m-\nu_s)\Big)>\gamma_0^2\geq\gamma^2,
\end{eqnarray*}
so that \eqref{gamfar} implies that the centered minimizer $\Psi_m$ beats any such shift.
On the other hand, for any $x_0\in K_{R}$, optimality of $\Psi_m(\cdot-0)$ for $u_0$ is inherited from $u_\phi$ because of the strong $L^2$ convergence on $K_{2R}$:
\begin{align*}
||u_0-\Psi_m(\cdot-0)||_{L^2(K_{2R})}\leq||u_0-\Psi_m(\cdot-x_0)||_{L^2(K_{2R})},
\end{align*}
so that in particular
\begin{align*}
||\tilde{u}-\Psi_m(\cdot-0)||_{L^2(\R^d)}\leq||\tilde{u}-\Psi_m(\cdot-x_0)||_{L^2(\R^d)}.
\end{align*}

The combination of \eqref{gamfar3}, \eqref{uh}, and \eqref{eless} leads to
\begin{align*}
 \inf_{\gamma\leq|u-\Psi_m|_{\R^d} \leq\gamma_0}\ez(u)\overset{\eqref{gamfar3}}\leq  \ez(\tilde{u})
 \overset{\eqref{uh}}\leq \liminf_{\phi\to 0} \ephi(u_\phi) \overset{\eqref{eless}}\leq \inf_{\gamma\leq|u-\Psi_m|_{\R^d} \leq\gamma_0}\ez(u)-\delta.
\end{align*}
This contradiction completes the proof.
\end{proof}

It remains to prove lemma \ref{lc2}. The proof mirrors (almost exactly) the proof of lemma \ref{lc}. Hence we will be brief and highlight only the differences.
\begin{proof}[Proof of lemma \ref{lc2}: Finite $\phi$ estimate for the saddle point]
We assume for a contradiction that there  exists $\gamma\in(0,\gamma_0]$ and $\delta>0$ such that there exists a sequence $\phi\downarrow 0$ and a corresponding sequence of functions $u_\phi\in X_\phi$ such that
\begin{align}
\lim_{\phi\to 0}\nu(u_\phi)&=\nu_s,\quad |u_\phi-\Psi_s|_{\tphil}\geq \gamma,\label{g2}\\
\qquad\text{and}\qquad \ephi(u_\phi)&\leq \inf_{\substack{|u-\Psi_{s}|_{\R^d}\geq\gamma\\ \nu(u)=\nu_s}}\ez(u)-\delta.\label{eless2}
\end{align}
From $\ephi(u_\phi)\leq E_M$, proposition \ref{lemma.lower.bound} and lemma \ref{nobigsets}, we deduce $\cald(u_\phi)\lesssim 1$ and consequently that $|u_\phi-\Psi_s|_{\tphil}\lesssim 1$. Indeed, we estimate roughly
\begin{align*}
  |u_\phi-\Psi_s|_{\tphil}&\leq ||u_\phi-\Psi_s||_{L^2(\tphil)}^2
  =\int_{B_{\nu_s(x)}}(u_\phi-1)^2\,dx+\int_{\tphil\setminus B_{\nu_s}(x)}(u_\phi+1)^2\,dx\\
  &\lesssim \nu_s+\cald(u_\phi)\lesssim 1.
\end{align*}
Hence we may without loss of generality assume that
\begin{align*}
  |u_\phi-\Psi_s|_{\tphil}\to \tilde{\gamma}\geq \gamma.
\end{align*}
(For ease of notation, we drop the tilde.) In the following argument,  from the proof of lemma \ref{lc} is replaced by
\begin{align*}
|u_\phi-\Psi_s|_{\tphil}&= \gamma+o(1).
\end{align*}
Also, by translating on the torus, we assume as in the proof of lemma \ref{lc} that
\begin{align*}
  \inf_{x_0\in\tphil} ||u_\phi-\Psi_s(\cdot-x_0)||_{L^2(\tphil)}=||u_\phi-\Psi_s(\cdot-0)||_{L^2(\tphil)}.
\end{align*}

The analogues of steps 1-3 of  the proof of lemma \ref{lc} carry over to our setting. Because $\nu(u_\phi)=\nu_s+o(1)$, the estimates in \eqref{fir} simplify to
\begin{align}
  \intfil e_\phi(u_\phi)\chi_3(u_\phi)\,dx&\geq -C_2(\phi)(\nu_s+o(1)),\notag\\
  \intfil e_\phi(u_\phi)\chi_1(u_\phi)\,dx&\geq C_3(\phi)\frac{(\nu_s+o(1))^2}{\xi^{d+1}},\label{fir2}
\end{align}
while the perimeter estimate carries over unchanged. We are led to
\begin{align}
 \lefteqn{ \liminf_{\phi\to 0} \ephi(u_\phi)}\notag\\
 &\geq c_0{\rm Per}(\{u_0=1\})+\bar{C}_1 \Big(\frac{\beta}{4}\Big)^{(d-1)/d}-4\nu_s+4
 \frac{\nu_s^2}{\xi^{d+1}}.\label{uh2}
\end{align}
On the other hand, the analogues of \eqref{nuk} and \eqref{deficit2} together with $\nu(u_\phi)=\nu_s+o(1)$ imply the relation
\begin{align}
  \nu_s=|\{u_0=1\}|+\frac{\beta}{4},\label{als}
\end{align}
so that, in analogy to the proof of lemma \ref{lc}, we recognize the right-hand side of \eqref{uh2} as the energy of a sharp-interface function $\tilde{u}$ that agrees with $u_0$ on $K$ and takes value $+1$ on a ball of volume $\beta/4$ somewhere in $\R^d\setminus K_{2R}$, and is $-1$ otherwise. We observe that
\begin{align}
  \nu(\tilde{u})=|\{u_0=1\}|+\frac{\beta}{4}\overset{\eqref{als}}=\nu_s.\label{als2}
\end{align}
On the other hand, exactly as in the proof of lemma \ref{lc}, we observe
\begin{align}
  |\tilde{u}-\Psi_s|_{\R^d}=\gamma\label{gamfar4}
\end{align}
and obtain the contradiction
\begin{align*}
 \inf_{\substack{|u-\Psi_{s}|_{\R^d}\geq\gamma\\ \nu(u)=\nu_s}}\ez(u)\overset{\eqref{als2},\eqref{gamfar4}}\leq  \ez(\tilde{u})
 \overset{\eqref{uh2}}\leq \liminf_{\phi\to 0} \ephi(u_\phi) \overset{\eqref{eless2}}\leq \inf_{\substack{|u-\Psi_{s}|_{\R^d}\geq\gamma\\ \nu(u)=\nu_s}}\ez(u)-\delta.
\end{align*}
\end{proof}

\section{Steiner symmetrization and finer results in $d=2$}\label{S:stf}
In this section we derive more detailed results for the constrained minimizers using Steiner symmetrization, the Euler-Lagrange equation, and the Bonnesen inequality. Symmetrization techniques have been widely used to establish symmetry of global minimizers of various energies (see for instance \cites{BL,K,LN}).
We mention in addition the continuous symmetrization of Brock (cf. \cite{B} and the references therein), which he has used in some settings to establish symmetry of local minimizers.

When uniqueness of a minimizer is known a-priori, its symmetry often follows automatically. When uniqueness is not assured, it becomes important to discuss the equality of the energy of a given function and that of its symmetrization. For Dirichlet type functionals and Schwarz symmetrization, this has been done in \cite{BZ}; for Steiner symmetrization, the first sufficient conditions for equality go back to \cite{K}, and sharp conditions for Dirichlet boundary conditions were presented recently in \cite{CF}. We will check that the analysis of \cite{CF} carries over to the torus.

We begin by recalling the definition and properties of Steiner symmetrization on the torus in subsection \ref{ss:steiner}. Then in subsection \ref{ss:steinuo} we apply Steiner symmetrization to the constrained minimizers, deducing symmetry and connectedness of superlevel sets. Connectedness is used together with the Bonnesen inequality in subsection \ref{S:finer} to make more precise the droplet-like shape of the constrained minimizers in $d=2$.

\subsection{Steiner symmetrization}\label{ss:steiner}
In order to recall Steiner symmetrization on the torus, we will need some notation.
For $i\in\{1,\hdots,d\}$, we define
\begin{eqnarray*}
\hat{x}_{i}:=(x_1,\hdots,x_{i-1},x_{i+1},\hdots,x_d),\quad\hbox{and}\quad
(\hat{x}_{i},y):=(x_1,\hdots,x_{i-1},y,x_{i+1},\hdots,x_d).
\end{eqnarray*}
Let
\begin{eqnarray*}
\tphil^{i}:=\{\hat{x}_{i}:x\in\tphil\}.
\end{eqnarray*}
For $u\in H^{1}(\tphil)$ and $t\in\R$, we define the superlevel sets of $u$ by
\begin{eqnarray*}
\Omega_{t}:=\{u> t\}.
\end{eqnarray*}
We now fix $i\in\{1,\hdots,d\}$ and, for a given $\hat{x}_{i}\in \tphil^{i}$, set
\begin{eqnarray*}
\Omega_{t}(\hat{x}_{i}):=\{x_i: u(\hat{x}_{i},x_i)> t\}.
\end{eqnarray*}
Thus $\Omega_{t}(\hat{x}_{i})$ is one-dimensional, and we will denote
\begin{align}
\mu_{u}(\hat{x}_{i},t):=\LM^{1}(\Omega_{t}(\hat{x}_{i})),\label{mu}
 \end{align}
 where $\LM^1(E)$ stands for the one-dimensional Lebesgue measure of a measurable set $E\subset\R$. The symmetrization of this set is then defined
as
\begin{eqnarray*}
\Omega_{t}^{*}(\hat{x}_{i}):=\left\{(\hat{x}_{i},y)\in\tphil:0\leq\vert y\vert\leq
\frac{1}{2}\mu_{u}(\hat{x}_{i},t)\right\}.
\end{eqnarray*}
The Steiner symmetrization $S_{i}(\Omega_{t})$ of the set $\Omega_{t}$ with respect to the
hyperplane $\{x_i=0\}$ is defined by
\begin{eqnarray*}
S_{i}(\Omega_{t}):=\bigcup_{\hat{x}_{i}\in\tphil^{i}}\Omega_{t}^{*}(\hat{x}_{i}).
\end{eqnarray*}
We repeat this construction for each coordinate axis and define
\begin{eqnarray}\notag
\Omega_{t}^{*}:=S_{d}\circ \hdots\circ S_{1}(\Omega_{t})
\end{eqnarray}
as the Steiner symmetrization of $\Omega_{t}$. (The order matters, since there are sets $\Omega$ for which
\begin{eqnarray*}
S_{i}\circ S_{j}(\Omega)\ne S_{j}\circ S_{i}(\Omega)
\end{eqnarray*}
if $i\ne j$.) By construction $\vert\Omega_{t}^{*}\vert=\vert\Omega_{t}\vert$; we will refer to this property by saying that \emph{Steiner symmetrization is equimeasurable}.
We define the Steiner symmetrization $u^{*}$ of a function $u$ by
\begin{eqnarray*}
u^{*}(x):=\sup\{t\in\R : x\in \Omega_{t}^{*}\}\qquad\hbox{for}\quad x\in\:\tphil.
\end{eqnarray*}
The equimeasurability implies in particular that $\mu_{u}(\hat{x}_{i},t)=\mu_{u^{*}}(\hat{x}_{i},t)$ for $i=1,\ldots,d$.
In what follows we will frequently use the notation
\begin{eqnarray*}
u^{*}(\hat{x}_{i},y)=u^{*}(x_1,\hdots,x_{i-1},y,x_{i+1},\hdots,x_d).
\end{eqnarray*}
\begin{remark}\label{prop}
By construction, $u^{*}$ has the following properties:
\begin{itemize}
\item[(i)] $u^{*}(\hat{x}_{i},x_i)=u^{*}(\hat{x}_{i},-x_i)$ for $i=1,\hdots,d$;
\item[(ii)] the superlevel sets of $u^{*}$ are simply connected and starshaped with respect to the origin;
\item[(iii)] $\partial_{i}u^{*}(x)\leq 0$ on $\{x\in\tphil : 0\leq x_i\leq\frac{\phi L}{2}\}$.
\end{itemize}
\end{remark}
In  \cite[theorem 2.31]{K} it was proved that Steiner symmetrization on the torus satisfies
\begin{eqnarray}
\label{rearr1}&&\int\limits_{\tphil}\vert\nabla u\vert^2\:dx
\geq\int\limits_{\tphil}\vert\nabla u^{*}\vert^2\:dx\\
\label{rearr2}&&\int\limits_{\tphil}F(u)\:dx
=\int\limits_{\tphil}F(u^{*})\:dx\qquad \text{for measurable functions $F$}.
\end{eqnarray}

In the next subsection, we will apply Steiner symmetrization to volume-constrained minimizers and conclude from \eqref{rearr1}, \eqref{rearr2} that there exist Steiner symmetric volume-constrained minimizers. A natural next question is whether \emph{all such constrained minimizers} are Steiner symmetric, which requires studying
 the case of equality in \eqref{rearr1}. This has been done for Sobolev functions subject to a Dirichlet boundary condition on suitable measurable subsets of $\R^d$ in \cite[theorem 2.2 and section 1]{CF}.
Via a mild adaptation of the proof in \cite{CF}, we obtain the following result on the $d$-torus. We assume $u\in C^1(\tphil)$ since this is the case in our application and elements of the proof simplify.
\begin{theorem}\label{t:steineq}
Let $u\in C^{1}(\tphil)$ satisfy
\begin{eqnarray}\label{keyass}
\left|\left\{(\hat{x}_i,y)\in\tphil : \partial_{y}u(\hat{x}_{i},y)=0,\: u(\hat{x}_{i},y)<M(\hat{x}_{i})\right\}\right|=0
\end{eqnarray}
for all $i=1,\ldots,d$, where
\begin{eqnarray*}
M(\hat{x}_i):=\max\left\{u(\hat{x}_i,y), y\in \left[-\frac{\phi L}{2},\frac{\phi L}{2}\right]\right\}.
\end{eqnarray*}
If
\begin{eqnarray}\label{inteq}
\int\limits_{\tphil}\vert\nabla u\vert^2\:dx=\int\limits_{\tphil}\vert\nabla u^{*}\vert^2\:dx,
\end{eqnarray}
then there exists $a\in \tphil$ such that $u$ is Steiner symmetric about the point $a$.
\end{theorem}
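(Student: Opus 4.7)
The plan is to adapt the equality characterization for Steiner symmetrization of Cianchi and Fusco \cite{CF} from the bounded/Dirichlet setting to the periodic setting, and to apply it one direction at a time.

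First, I would observe that since each single-direction symmetrization $S_i$ does not increase the Dirichlet integral (the one-directional analogue of \eqref{rearr1}), the composite equality \eqref{inteq} forces equality at every intermediate step: writing $u_0 := u$ and $u_i := S_i u_{i-1}$ so that $u_d = u^*$, one has $\int_{\tphil}|\nabla u_{i-1}|^2\,dx = \int_{\tphil}|\nabla u_i|^2\,dx$ for each $i$. Since $S_j$ for $j\neq i$ is a rearrangement on every $x_i$-fiber and is equimeasurable, the non-degeneracy hypothesis \eqref{keyass} for $u$ in direction $i$ transfers to the same hypothesis for $u_{i-1}$ in direction $i$, so one may apply the single-direction result at each step.

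Second, for a fixed direction $i$, I would split $|\nabla u_{i-1}|^2 = |\nabla_{\hat{x}_i}u_{i-1}|^2 + |\partial_{i}u_{i-1}|^2$ and invoke the coarea formula along each fiber $\{(\hat{x}_i,\cdot)\}$, which reduces the equality in the directional Pólya--Szegő inequality to a fiberwise one-dimensional statement. Condition \eqref{keyass} ensures that the critical set of $u_{i-1}$ below the fiberwise maximum is negligible, so that \cite[theorem 2.2]{CF} applies fiberwise and forces, for a.e.\ $\hat{x}_i \in \tphil^{i}$ and every $t<M(\hat{x}_i)$, the superlevel set $\Omega_t(\hat{x}_i)\subset S^1$ to coincide (up to $\LM^1$-null sets) with a translate of the symmetric interval $\Omega_t^{*}(\hat{x}_i)$. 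Because the superlevel sets nest monotonically in $t$ and each is an interval on $S^1$, the required translation is independent of $t$ and defines a single function $a_i: \tphil^{i} \to S^1$. The adaptation from \cite{CF} to the torus is essentially cosmetic: the CF argument is local on each fiber, and the global $C^1$ regularity rules out the boundary pathologies that are the main source of technicality in their paper.

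Third, and this will be the main obstacle, I must promote $a_i$ from a function on $\tphil^{i}$ to a constant. In the Dirichlet setting of \cite{CF} the translation is pinned down by the vanishing of $u$ on $\partial\Omega$, but on the torus only translation invariance is available. I would argue in two steps: the $C^1$ regularity together with \eqref{keyass} implies that $a_i$ is continuous on the open set where $M(\hat{x}_i)$ exceeds the global minimum of $u$; then, by iterating the argument with a second direction $j$ and comparing the two orders $S_j S_i$ versus $S_i S_j$ applied to the same function, the representations $u_{i-1}(\hat{x}_i, x_i) = u_i(\hat{x}_i, x_i - a_i(\hat{x}_i))$ must be mutually consistent on a set of full measure of the torus, which combined with the symmetry already established in the other directions forces $a_i$ to be constant in $\hat{x}_i$. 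Performing this for each $i = 1,\ldots,d$ yields a vector $a := (a_1,\ldots,a_d) \in \tphil$ with $u(\cdot) = u^{*}(\cdot - a)$, which is the asserted Steiner symmetry about $a$.
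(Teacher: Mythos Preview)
Your Step~3 is where the argument breaks down, and it is exactly the step that carries all the content. A purely fiberwise application of a one-dimensional equality result only tells you that for each $\hat{x}_i$ the superlevel set on the fiber is an arc, i.e., that there is \emph{some} translation $a_i(\hat{x}_i)$; it tells you nothing about how this arc moves as $\hat{x}_i$ varies. Your proposed device for forcing constancy---comparing the two orders $S_jS_i$ and $S_iS_j$ and invoking ``mutual consistency''---does not work: these two compositions are in general different (the paper points this out explicitly), and nothing in the hypothesis forces them to agree, so no constraint on $a_i$ can be extracted this way. The continuity of $a_i$ that you sketch is also insufficient; a continuous non-constant $a_i$ is perfectly compatible with each fiber being an arc.

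The paper's proof avoids the two-stage (fiberwise, then constancy) structure altogether. It rederives the one-directional P\'olya--Szeg\H{o} inequality via the coarea formula together with Cauchy--Schwarz applied to the \emph{transversal} derivatives $\partial_j u$, $j\neq i$, on each level set $\partial\{y:u(\hat{x},y)>t\}$. Equality in \eqref{inteq} then forces equality in each of these Cauchy--Schwarz inequalities, which gives $\partial_j u(\hat{x},y_1)=\partial_j u(\hat{x},y_2)$ for $j\neq i$ and $|\partial_y u(\hat{x},y_1)|=|\partial_y u(\hat{x},y_2)|$, where $y_1,y_2$ are the endpoints of the arc. Comparing the two representations of the unit normal to the set $\{(\hat{x},y,t):y_1(\hat{x},t)<y<y_2(\hat{x},t)\}$ then yields $\hat{\nabla}(y_1+y_2)=0$ and $\partial_t(y_1+y_2)=0$, so the midpoint $(y_1+y_2)/2$ is a single constant. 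The point is that the equality case in the cross-derivative Cauchy--Schwarz terms is precisely what couples different fibers and produces the constancy; a black-box fiberwise invocation of \cite{CF} throws this information away.

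A smaller but still genuine issue: your claim that ``$S_j$ for $j\neq i$ is a rearrangement on every $x_i$-fiber'' is false. The operation $S_j$ rearranges in the $x_j$-variable for each fixed $\hat{x}_j$, which mixes points from different $x_i$-fibers, so the intermediate functions $u_{i-1}$ need not inherit \eqref{keyass} in direction $i$ from $u$, and indeed need not even be $C^1$. The paper does not attempt to transfer \eqref{keyass} through intermediate symmetrizations; it works with the original $C^1$ function $u$ directly in each coordinate direction.
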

\begin{remark}\label{rekeyass}
Notice that \eqref{keyass} implies that $u>\inf_{\tphil}u$ a.e. in $\tphil$. Indeed, $\partial_{y}u(\hat{x}_{i},y)=0$ on $\{u=\inf_{\tphil}u\}$, and if the latter set had positive measure, then we would contradict \eqref{keyass}.
Conversely, if $u>\inf_{\tphil}u$ a.e. then it is enough for \eqref{keyass} to check that the measure of $\partial_{y}u(\hat{x}_{i},y)=0$ on $\inf_{\tphil}u<u(\hat{x}_i,y)<M(\hat{x}_i)$ is zero. This will be the way that we check the condition in our application.
\end{remark}
For completeness, we give the proof (which simplifies considerably in our $C^1$ setting) in the appendix. The following lemma says that condition \eqref{keyass} can be equivalently formulated in terms of $u^{*}$ rather than $u$. It is the analogue of \cite[proposition 2.3]{CF}.
\begin{lemma}\label{steinereq}
Let $u\in C^1(\tphil)$. Then for all $\hat{x}_{i}\in\tphil^{i}$ we have
\begin{eqnarray*}
&&\LM^1\left(\left\{y:\partial_{y}u(\hat{x}_{i},y)=0,t<u(\hat{x}_{i},y)<M(\hat{x}_{i})\right\}\right)\\
&&\qquad=
\LM^1\left(\left\{y:\partial_{y}u^{*}(\hat{x}_{i},y)=0,t<u^{*}(\hat{x}_{i},y)<M(\hat{x}_{i})\right\}\right)
\end{eqnarray*}
for almost all $t\in(\inf_{\tphil}u,M(\hat{x}_{i}))$ and for all $i=1,\ldots,d$.
\end{lemma}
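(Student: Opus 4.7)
The plan is to reduce the identity to a one-dimensional statement on a fixed slice and then establish it via the 1D area formula, Sard's theorem, and the Lebesgue differentiation theorem. Fix $i \in \{1,\ldots,d\}$ and $\hat{x}_i \in \tphil^i$, and abbreviate $v(y) := u(\hat{x}_i,y)$, $w(y) := u^*(\hat{x}_i,y)$ for $y \in I := [-\phi L/2,\phi L/2]$. From the slice-equimeasurability $\mu_u(\hat{x}_i,\cdot) = \mu_{u^*}(\hat{x}_i,\cdot)$ recalled immediately before the lemma statement, we have $\mu_v(s) = \mu_w(s)$ for every $s$, where $\mu_f(s) := \LM^1(\{y\in I: f(y) > s\})$. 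In particular $\sup v = \sup w = M(\hat{x}_i)$, so $\LM^1(\{t < v < M(\hat{x}_i)\}) = \LM^1(\{t < w < M(\hat{x}_i)\})$ for every $t$. Splitting each set according to whether the derivative vanishes, it is enough to prove
\[
\LM^1(\{v' \neq 0,\; t < v < M(\hat{x}_i)\}) = \LM^1(\{w' \neq 0,\; t < w < M(\hat{x}_i)\}) \quad \text{for a.e.\ } t.
\]

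The function $v$ is $C^1$ (hence Lipschitz on the compact interval $I$), and $w$ is the 1D symmetric-decreasing rearrangement of $v$ (by properties (i), (iii) of Remark \ref{prop} together with $\mu_v = \mu_w$), hence also Lipschitz. Applying the 1D area formula -- or, equivalently, a change of variables on each connected component of the open set $\{v' \neq 0\}$ where $v$ is a local diffeomorphism, and the analogous argument on $\{w' \neq 0\}$ -- yields
\[
\LM^1(\{v' \neq 0,\; t < v < M\}) = \int_t^M N_v(s)\,ds, \qquad \LM^1(\{w' \neq 0,\; t < w < M\}) = \int_t^M N_w(s)\,ds,
\]
with coarea density $N_f(s) := \sum_{y \in f^{-1}(s),\, f'(y) \neq 0} |f'(y)|^{-1}$. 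The lemma will therefore follow once $N_v = N_w$ almost everywhere is established.

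The key step is to identify $N_v(s) = -\mu_v'(s)$ and $N_w(s) = -\mu_w'(s)$ for a.e.\ $s$; since $\mu_v = \mu_w$, this yields the desired equality of densities, and the lemma then follows by integration from $t$ to $M(\hat{x}_i)$. For $v$, write
\[
\mu_v(s) - \mu_v(s+h) = \LM^1(\{s < v \leq s+h,\; v' \neq 0\}) + \LM^1(\{s < v \leq s+h,\; v' = 0\}),
\]
and recognize the first term as $\int_s^{s+h} N_v(r)\,dr$ via the area formula. Sard's theorem for the $C^1$ map $v$ on the compact interval $I$ guarantees that $v(\{v' = 0\})$ is closed and has Lebesgue measure zero; hence for every $s \notin v(\{v' = 0\})$ -- which is the case for a.e.\ $s$ -- there is an open neighborhood of $s$ disjoint from $v(\{v' = 0\})$, so the second term above vanishes for all sufficiently small $h > 0$. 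Dividing by $h$ and invoking Lebesgue differentiation yields $N_v(s) = -\mu_v'(s)$ for a.e.\ $s$. The identical argument applies to the Lipschitz monotone (on each half of $I$) function $w$, whose critical-value set $w(\{w' = 0\})$ is the countable, hence null, set of plateau values of $w$.

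The main obstacle I anticipate is the identification $N = -\mu'$, which requires the careful combination of the 1D area formula, Sard's theorem for $C^1$ maps on compact intervals, the closedness of the image of the critical set, and the Lebesgue differentiation theorem; once this is in place, the identity $\mu_v = \mu_w$ transfers to the derivatives and then via integration to the measures of the sets in question.
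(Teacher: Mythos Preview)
Your approach is essentially the paper's: decompose each slice distribution function $\mu$ into the critical part $\LM^1(\{f'=0,\, f>t\})$ and an absolutely continuous non-critical part $\int_t^M N_f(s)\,ds$, identify $-\mu'$ with $N_f$ a.e., and conclude via $\mu_v=\mu_w$. Your treatment of the $C^1$ function $v$ is correct.

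There is, however, a gap in your handling of $w=u^*$. The slice $w$ is the one-dimensional symmetric-decreasing rearrangement of $v\in C^1$, hence Lipschitz, but in general not $C^1$; thus $\{w'=0\}$ need not be closed and $w(\{w'=0\})$ need not be closed either. Your assertion that $w(\{w'=0\})$ is the countable set of plateau values is also false: a monotone Lipschitz function can have $w'=0$ on an uncountable set on which it is strictly monotone (take $w(y)=-\int_0^y g$ with $g\ge 0$ continuous and vanishing on a Cantor set). Consequently the ``identical argument'' you invoke for $w$, which hinges on choosing a neighborhood of $s$ disjoint from a \emph{closed} critical-value set, does not carry over.

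The paper circumvents this by a $BV$ argument that works uniformly for $v$ and $w$: set $h_f(t)=\LM^1(\{y:f'(y)=0,\, f(y)>t\})$, a nonincreasing (hence $BV_{\mathrm{loc}}$, right-continuous) function. One checks that the total variation measure $|Dh_f|$ is concentrated on $f(\{f'=0\})$, and the one-dimensional Sard theorem for absolutely continuous functions (valid for Lipschitz $w$) yields $\LM^1\bigl(f(\{f'=0\})\bigr)=0$. Hence $Dh_f$ is purely singular and $h_f'(t)=0$ for a.e.\ $t$; this supplies precisely the $o(h)$ control of $\LM^1(\{s<w\le s+h,\, w'=0\})$ that your difference-quotient step requires. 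With this replacement your proof coincides with the paper's.
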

The proof of this lemma is also given in the appendix.
\subsection{Steiner symmetrization of the volume-constrained minimizers}\label{ss:steinuo}
In this subsection we deduce additional properties of the volume-constrained minimizers (and hence in particular of $\um$ and $\uws$) via the Steiner symmetrization. We recall that the volume-constrained minimizers $\uo$ minimize $\ephi$ over $X_\phi$ subject to $\nu(u)=\omega$ and
 that their existence is assured by the direct method of the calculus of variations.
According to the theory of constrained minimization, there exist two Lagrange multipliers $\lambda_{\phi}\in\R$
and $\lambda_{\omega}\in\R$ such that
\begin{eqnarray}\label{euler1}
D\ephi(u_{\omega,\phi})(w)+
\lambda_{\phi}\int\limits_{\tphil}w\:dx
+
\lambda_{\omega}\int\limits_{\tphil}\chi_3'(u_{\omega,\phi})\:w\:dx
=0
\end{eqnarray}
for all $w\in H^{1}\cap L^4(\tphil)$, where
\begin{eqnarray*}
D\ephi(u_{\omega,\phi})(w):=\phi\int\limits_{\tphil}\nabla u_{\omega,\phi}\cdot\nabla w\:dx
+
\frac{1}{\phi}\int\limits_{\tphil}G'(u_{\omega,\phi})w\:dx.
\end{eqnarray*}
For short we will write $u$ instead of $u_{\omega,\phi}$ for the rest of this subsection.
 Note that the Lagrange parameters $\lambda_{\phi}$ and $\lambda_{\omega}$ depend on $u$ in general. Thus we will write $\lambda_{\phi}=\lambda_{\phi}(u)$ and $\lambda_{\omega}=\lambda_{\omega}(u)$.

Recall from lemma \ref{l:linf} that any constrained minimizer is bounded. Therefore standard regularity theory applies, and we get $u\in W^{2,p}$ for all $1<p<\infty$ (see e.g. \cite[theorem 9.9]{GT}). By imbedding this implies $u\in C^{1,\alpha}$ for all $0\leq \alpha< 1$. The Schauder theory (cf.\ \cite[section 6]{GT}) and a bootstrap argument then imply that $u$ is smooth. Hence $u$ is a classical solution of
\begin{eqnarray}\label{euler2}
-\phi\Delta u+\frac{1}{\phi}G'(u)+\lambda_{\phi}(u)+\lambda_{\omega}(u)\chi_{3}'(u)=0\qquad\hbox{in}\:\tphil.
\end{eqnarray}
We are now ready to prove proposition \ref{prop:steiner}.

\begin{proof}[Proof of proposition \ref{prop:steiner}]
Let $u^*$ denote the successive Steiner symmetrization of $u$ about the origin with respect to the $d$-axes, say in the $x_1-,x_2-,\dots,x_d-$ order. (As explained in the previous subsection, the order needs to be specified.)
Because of \eqref{rearr2}, we have
\begin{eqnarray*}
\int_{\tphil} u^*\,dx=\int_{\tphil}u\,dx,\qquad \nu(u^*)=\nu(u),\qquad \int_{\tphil}G(u^*)\,dx=\int_{\tphil}G(u)\,dx.
\end{eqnarray*}
Together with \eqref{rearr1}, this implies the existence of a Steiner symmetric constrained minimizer.

Note that, by the definition of Steiner symmetrization (and up to a translation), $u^*$ is symmetric about and decreases monotonically in the direction away from the hyperplanes $x_1=0$,...,\,$x_d=0$. Moreover, the superlevel sets of $u^*$ are simply connected.

It remains to establish that $u$ is a translate of $u^*$. For this we turn to the result of \cite{CF}.
From remark \ref{prop} (iii), we know that
\begin{align}
\partial_i u^{*}\leq 0\qquad\text{ on $\{x_i\geq 0\}$}\label{ser1}
 \end{align}
for each $i=1,\ldots, d$. We will now strengthen this result.  Since $u^{*}$ is a smooth solution of the Euler Lagrange equation \eqref{euler2}, we may differentiate \eqref{euler2} with respect to $x_i$ to obtain the following linear equation for $\partial_{i}u^{*}$:
\begin{eqnarray}
-\phi\Delta\partial_{i}u^{*}+\left(\frac{1}{\phi}G''(u^{*})+\lambda_{\omega}(u^{*})\chi_{3}''(u^{*})\right)\partial_{i}u^{*}=0\label{ser2}
\end{eqnarray}
in $\{x\in\tphil : 0\leq x_i\leq\frac{\phi L}{2}\}$. Using \eqref{ser1}, \eqref{ser2}, and  the strong maximum principle of Serrin \cite[theorem 2.10]{HL}, we conclude that either $\partial_{i}u^{*}\equiv 0$ or $\partial_{i}u^{*}< 0$ on
$\left\{x\in\tphil : 0< x_i <\frac{\phi L}{2}\right\}$.
For $\phi$ small the estimate
\begin{align*}
  |u^*-\Psi(\cdot;\omega)|_{\tphil}\leq |u-\Psi(\cdot;\omega)|_{\tphil}\ll 1
\end{align*}
(cf.\ \eqref{uwtoo}) rules out the first possibility. Note that the first inequality is due to the fact that Steiner symmetrization is nonexpansive (see e.g. \cite{K} section II.2). Thus \eqref{ser1} improves to
\begin{eqnarray*}
\partial_{i}u^{*}< 0\qquad\hbox{on}
\qquad\left\{x\in\tphil : 0< x_i <\frac{\phi L}{2}\right\}
\end{eqnarray*}
for all $i=1,\ldots, d$. This implies that $u^*$ is strictly decreasing in all directions away from zero. Hence condition \eqref{keyass} is satisfied for $u^{*}$ and, by lemma \ref{steinereq} and theorem \ref{t:steineq}, $u$ is equal to a translate of $u^{*}$, as desired.
\end{proof}
\begin{remark}\label{rem:nonuq}
Proposition \ref{prop:steiner} does not establish uniqueness; there may be more than one Steiner symmetric constrained minimizer with prescribed volume $\omega$.
\end{remark}
\subsection{Refinement in $d=2$}\label{S:finer}
Using the connectedness of the superlevel sets of $\uo$ from proposition \ref{prop:steiner} together with the Bonnesen inequality, we can strengthen the quantitative estimate \eqref{visam} on the sphericity of the superlevel sets in dimension $d=2$. Loosely speaking, we can show that for any constrained minimizer $\uo$, the superlevel sets $\{\uo >\eta\}$ for $\eta \in (-1,1)$  cannot possess ``tentacles'' and are therefore close to a ball in the stronger sense of Hausdorff distance. Hence the possibility of mass drifting off to infinity (which required care in lemma \ref{lc}) is precluded. The main tool that is needed in order to establish this fact is the Bonnesen inequality, which we state below after recalling the definition of the outer and inner radius.
\begin{definition}
Consider a simply connected domain $A\subset\mathbb{R}^2$. The outer radius of $A$, denoted $\rho_{out}(A)$, is defined as the infimum of the radii of all the disks in $\mathbb{R}^2$ that contain $A$. Similarly, the inner radius of $A$, denoted $\rho_{in}(A)$, is defined as the supremum of the radii of all the disks in $\mathbb{R}^2$ that are contained in $A$. Lastly, we define the volume radius of $A$, denoted $\rho(A)$, as the radius of a disk in $\mathbb{R}^2$ whose measure is equal to that of $A$.
\end{definition}
\begin{rmrk}
We may use the same definition for the inner and outer radius of a simply connected domain $A\subset\tphil$, provided that there exists a disk in $\tphil$ that contains $A$. Note that in that case there holds $\rho_{out}(A)<(\phi L)/2$ and $\per_{\tphil}(A)=\per_{\mathbb{R}^2}(A)$.

\end{rmrk}
The classical Bonnesen inequality in the plane is as follows.
\begin{thm}[Bonnesen inequality in $\mathbb{R}^2$]\label{Bonnesen.Ineq}
For any simply connected domain $A\subset\mathbb{R}^2$ with smooth boundary, there holds
\begin{equation}\label{Bonnesen.Ineq2}
\per_{\R^2}(A)\geq \sqrt{\pi}\left(4|A|+\Big(\rho_{out}(A)-\rho_{in}(A)\Big)^2 \right)^{1/2}.
\end{equation}
\end{thm}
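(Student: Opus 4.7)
The plan is to prove the inequality in two stages: first the classical convex case, then a reduction of the simply connected case to it.

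\textbf{Convex case.} Let $K\subset\R^2$ be convex with perimeter $L$, area $|K|$, inradius $\rho_{in}$ and circumradius $\rho_{out}$. I would introduce the quadratic
\[
p(\rho) := \pi\rho^2 - L\rho + |K|,
\]
and show $p(\rho)\leq 0$ for every $\rho\in[\rho_{in},\rho_{out}]$. The nonpositivity at the endpoints follows by comparing the Steiner/parallel body formula $|K\oplus B_\eta| = |K| + L\eta + \pi\eta^2$ (valid for convex $K$ and $\eta\geq 0$) and its inner analogue with the obvious enclosing/enclosed disks of radii $\rho_{out}$ and $\rho_{in}$. A concavity/continuity argument in $\rho$, again from the Steiner formula, extends the inequality from the endpoints to the whole interval. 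Since $p\leq 0$ on a nonempty interval, its real roots $\rho_1\leq \rho_2$ must bracket $[\rho_{in},\rho_{out}]$, and Vieta's formulas give
\[
\rho_{out}-\rho_{in}\leq \rho_2-\rho_1 = \frac{1}{\pi}\sqrt{L^2 - 4\pi|K|}.
\]
Rearranging yields the sharp Bonnesen inequality $L^2 \geq 4\pi|K| + \pi^2(\rho_{out}-\rho_{in})^2$, which is strictly stronger than the theorem's statement since $\pi>1$.

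\textbf{Reduction from simply connected to convex.} To handle general simply connected $A$, I would pass to the convex hull $K:=\mathrm{conv}(A)$. The inequalities $\per(A)\geq\per(K)$, $|A|\leq|K|$, and $\rho_{out}(A)=\rho_{out}(K)$ are immediate, but in general $\rho_{in}(A)\leq\rho_{in}(K)$, so the Bonnesen gap is bigger for $A$ than for $K$ and a naive chain of inequalities does not close. The factor $\pi$ in the theorem (rather than the sharp $\pi^2$) is precisely the slack reserved to absorb this loss: one would supplement the convex Bonnesen inequality with a quantitative geometric estimate controlling $\rho_{in}(K)-\rho_{in}(A)$ in terms of the concavities $K\setminus A$ and the perimeter surplus $\per(A)-\per(K)$.

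I expect the second step — the reduction from simply connected to convex — to be the main technical obstacle, since for the convex case the machinery (Steiner formula, quadratic root analysis) is entirely classical. In the actual application to the paper's constrained minimizers, however, the superlevel sets under consideration are already known from proposition~\ref{prop:steiner} to be simply connected and strictly star-shaped with respect to the unique point of maximum, which should make the passage from the convex Bonnesen inequality to the desired bound on the droplet shapes considerably more tractable than the fully general case.
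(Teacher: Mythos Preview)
The paper does not prove this theorem; it is stated as the classical Bonnesen inequality and simply invoked in the proof of proposition~\ref{Asym.stronger}. So there is no ``paper's own proof'' to compare against, and the question is whether your argument stands on its own.

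Your Step~1 (the convex case via the Steiner polynomial $p(\rho)=\pi\rho^2-L\rho+|K|$ and a root comparison) is the standard Bonnesen argument and is correct. It even yields the sharp constant $\pi^2$ in place of the paper's $\pi$.

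Your Step~2, however, is incomplete and the obstacle you identify is real. Passing to the convex hull $K=\mathrm{conv}(A)$ decreases perimeter, increases area, and preserves $\rho_{out}$, but it can \emph{strictly increase} $\rho_{in}$, so $\rho_{out}(A)-\rho_{in}(A)$ can exceed $\rho_{out}(K)-\rho_{in}(K)$ by an amount that is not controlled by any of the quantities you retain. Your suggestion that the slack between the paper's coefficient $1$ and the sharp $\pi$ absorbs this loss is not substantiated, and in fact it cannot work uniformly: one can make $\rho_{in}(K)-\rho_{in}(A)$ arbitrarily large relative to the perimeter excess $\per(A)-\per(K)$ by carving a single thin slit into a disk.

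The classical route for simply connected (non-convex) domains does \emph{not} pass through the convex hull. One proves directly that $p(\rho_{in})\leq 0$ and $p(\rho_{out})\leq 0$ for the original domain $A$; since $p$ opens upward, this already forces $\rho_{in},\rho_{out}\in[\rho_1,\rho_2]$ and hence $\rho_{out}-\rho_{in}\leq\rho_2-\rho_1=\pi^{-1}\sqrt{L^2-4\pi|A|}$, which is the sharp inequality. The endpoint inequalities $p(\rho_{in})\leq 0$, $p(\rho_{out})\leq 0$ for Jordan domains go back to Bonnesen and can be obtained, for instance, via integral-geometric (kinematic) formulas rather than the Steiner parallel-body expansion, which is what breaks in the non-convex case. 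Your final remark that the superlevel sets in the application are Steiner symmetric is accurate but does not by itself close the gap: Steiner symmetry gives star-shapedness, not convexity, and the Steiner-formula argument still requires convexity.
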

An application of the Bonnesen inequality to our problem yields the following result.
\begin{prop}\label{Asym.stronger}
Fix $\xi \in (\tilde{\xi_2},\xi_2]$ and consider the critical scaling \eqref{crit}. Fix any $\omega_1>0$ and consider $\phi>0$ sufficiently small. For any volume-constrained minimizer $\uo$ with volume $\omega\in[\omega_1,\xi^3/2]$ and any $\eta\in(-1,1)$, there holds
\begin{equation}\label{str.0}
\rho_{out}(\{\uo >\eta \})=r_\omega+\frac{O(\phi^{1/6})}{(1+\eta)},
\end{equation}
and
\begin{equation}\label{str.0.1}
\rho_{in}(\{\uo >\eta \})=r_\omega+\frac{O(\phi^{1/6})}{(1-\eta)},
\end{equation}
where
\begin{equation}\label{str.0.2}
r_\omega:=\sqrt{\frac{\omega}{\pi}}.
\end{equation}
Consequently, there holds
\begin{equation}\label{str.0.3}
\rho_{out}(\{\uo >\eta \})-\rho_{in}(\{\uo >\eta \})=\frac{O(\phi^{1/6})}{(1-\eta^2)}.
\end{equation}
\end{prop}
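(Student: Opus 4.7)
The plan is to apply the Bonnesen inequality to the superlevel sets $A_\eta := \{\uo > \eta\}$, which are simply connected by proposition \ref{prop:steiner}. Since $|A_\eta|$ is bounded (by corollary \ref{cor.1} for $\eta$ bounded away from $-1$, and by the elementary bound $(\eta+1)|A_\eta|\leq \int_{\tphil}(u+1)\,dx \lesssim 1$ otherwise), the outer radius satisfies $\rho_{out}(A_\eta) < \phi L/2$ for $\phi$ small, so theorem \ref{Bonnesen.Ineq} yields
\[
(\rho_{out}(A_\eta) - \rho_{in}(A_\eta))^2 \;\leq\; \frac{\per(A_\eta)^2 - 4\pi|A_\eta|}{\pi}\;=\;\frac{\bigl(\per(A_\eta)-{\rm P}_E(A_\eta)\bigr)\bigl(\per(A_\eta)+{\rm P}_E(A_\eta)\bigr)}{\pi}.
\]
Combining minimality of $\uo$, the lower bound in proposition \ref{lemma.lower.bound}, and the upper bound construction of proposition \ref{prop.upper.bound} applied to produce an admissible function of volume $\omega$, one obtains $\ephi(\uo)=f_\xi(\omega)+O(\phi|\ln\phi|)$, and thus the integrated bound $I(\uo)\lesssim \phi^{1/3}$ on the isoperimetric excess of the level sets.

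To extract the individual estimates \eqref{str.0} and \eqref{str.0.1}, I would write $r_\omega=\sqrt{\omega/\pi}$ and note that the volume radius $r_{\mathrm{vol}}(A_\eta):=\sqrt{|A_\eta|/\pi}$ satisfies $\rho_{in}(A_\eta)\leq r_{\mathrm{vol}}(A_\eta)\leq \rho_{out}(A_\eta)$, with $|r_{\mathrm{vol}}(A_\eta)-r_\omega|$ controlled by corollary \ref{cor.1}. Since Steiner symmetry (proposition \ref{prop:steiner}) lets us identify, up to translation, the center of the best-fit ball $B_\omega$ with the unique maximum of $\uo$, theorem \ref{t:spheres} gives $|\uo-\Psi(\cdot;\omega)|_{\tphil}^2\lesssim \phi^{1/6}/\omega$. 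A Chebyshev argument with weights $(1+\eta)^{-2}$ and $(1-\eta)^{-2}$ then yields
\[
|A_\eta\setminus B_\omega|\lesssim \frac{\phi^{1/6}}{\omega(1+\eta)^2},\qquad |B_\omega\setminus A_\eta|\lesssim \frac{\phi^{1/6}}{\omega(1-\eta)^2},
\]
the two asymmetric $\eta$-dependencies that appear in the claim. By star-shapedness of $A_\eta$ about the origin (proposition \ref{prop:steiner}), these symmetric differences can be written in polar coordinates as $\int_0^{2\pi}(\rho(\theta,\eta)^2-r_\omega^2)_{\pm}/2\,d\theta$, giving integrated angular control on the excess $\rho-r_\omega$.

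The crux is upgrading this integrated control to the pointwise (Hausdorff-type) bounds on $\rho_{out}-r_\omega$ and $r_\omega-\rho_{in}$, and this is where the Bonnesen excess enters: a thin-but-tall spike of $A_\eta$ would have small volume contribution yet force $\rho_{out}-\rho_{in}$ to be large, and through the Bonnesen inequality it would show up in the isoperimetric excess $\per(A_t)-{\rm P}_E(A_t)$ at all nearby levels $t$, contradicting $I(\uo)\lesssim \phi^{1/3}$. Making this rigorous is the main obstacle; I would do so by exploiting the monotonicity of $\rho_{out}(\cdot)$ and $\rho_{in}(\cdot)$ (both non-increasing in $\eta$, since $A_\eta$ shrinks in $\eta$) together with the smoothness of $\uo$ from subsection \ref{ss:steinuo}, so that a Bonnesen excess at level $\eta$ persists over an interval of nearby levels and feeds into the integrated bound. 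Once \eqref{str.0} and \eqref{str.0.1} are in hand, \eqref{str.0.3} follows immediately from the identity $\tfrac{1}{1+\eta}+\tfrac{1}{1-\eta}=\tfrac{2}{1-\eta^2}$.
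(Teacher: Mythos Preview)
Your proposal has the right ingredients (Bonnesen, the integrated excess bound $I(\uo)\lesssim\phi^{1/3}$, monotonicity of $\rho_{out}$ and $\rho_{in}$ in the level), but it contains a genuine gap and an unnecessary detour that obscures the clean mechanism actually at work.

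\textbf{The gap.} The inference ``$|A_\eta|$ bounded $\Rightarrow$ $\rho_{out}(A_\eta)<\phi L/2$'' is false in general: a thin tentacle has small volume and large outer radius. Steiner symmetry alone does not rule this out either. The paper handles this separately: if $\{\uo>-1+2\phi^{1/3}\}$ were not contained in a disk of radius $<\phi L/2$, Steiner symmetry forces $\per_{\tphil}(\{\uo>t\})\geq \tfrac12\phi L$ for all $t\in[-1+\phi^{1/3},-1+2\phi^{1/3}]$, which contributes $\gtrsim\phi^{1/6}$ to the integrated excess and contradicts $I(\uo)\lesssim\phi^{1/3}$. You need this argument before Bonnesen is even applicable on the torus.

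\textbf{The detour and the missing mechanism.} The $L^2$/Chebyshev step giving $|A_\eta\triangle B_\omega|$ bounds is not needed, and it is not where the $(1\pm\eta)$ factors come from. The paper's argument is direct and explains these factors transparently. Apply Bonnesen level by level inside $I(\uo)$ to obtain
\[
\int_{-1+2\phi^{1/3}}^{1-2\phi^{1/3}}\sqrt{\tilde G(t)}\,\bigl(\Delta\rho(t)\bigr)^2\,dt\;\lesssim\;\phi^{1/3},\qquad \Delta\rho(t):=\rho_{out}(t)-\rho_{in}(t).
\]
Now for $t\in[-1+2\phi^{1/3},\eta]$ use monotonicity $\rho_{out}(t)\geq\rho_{out}(\eta)$ together with the trivial volume bound $\rho_{in}(t)\leq r_\omega+O(\phi^{1/6})$ (from corollary~\ref{cor.1}) to get $\Delta\rho(t)\geq \rho_{out}(\eta)-r_\omega+O(\phi^{1/6})$ uniformly on that subinterval. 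Substituting this constant lower bound and observing that $\sqrt{G(t)}\sim(1-t^2)$, so that $\int_{-1}^{\eta}\sqrt{G(t)}\,dt\gtrsim(1+\eta)^2$, yields $(\rho_{out}(\eta)-r_\omega)^2(1+\eta)^2\lesssim\phi^{1/3}$, i.e.\ \eqref{str.0}. The companion estimate \eqref{str.0.1} uses the interval $[\eta,1-2\phi^{1/3}]$, the monotonicity $\rho_{in}(t)\leq\rho_{in}(\eta)$, and $\rho_{out}(t)\geq r_\omega+O(\phi^{1/6})$. The $(1\pm\eta)$ dependence thus arises from the \emph{weight} $\sqrt{G(t)}$ in the excess integral, not from a Chebyshev estimate on the $L^2$ distance. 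Your final paragraph gestures at exactly this monotonicity-plus-integration idea, but stops short of identifying the weight $\sqrt{G}$ as the source of the $\eta$-dependence; once you see that, the polar-coordinate and symmetric-difference computations become superfluous.
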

\begin{remark}
  In particular \eqref{str.0.3} holds for $\um$ and $\uws$.
\end{remark}
\begin{proof}[Proof of proposition \ref{Asym.stronger}]
We recall from subsection \ref{ss:steinuo} that $\uo$ is smooth and Steiner symmetric. We also recall \eqref{ibd2} and remark that one can deduce in the same way that
\begin{align}
\int_{-1+\phi^{1/3}}^{1-\phi^{1/3}}\sqrt{2\tilde{G}(t)}\,\Big({\rm Per}_{\tphil}(\{\uo>t\})-{\rm P}_E(\{\uo>t\})\Big)\,dt\lesssim \phi^{1/3}.\label{JJ}
\end{align}

Next we claim that we may shift $\uo$ so that $\{\uo>-1+2\phi^{1/3}\}$ is contained within a disk centered at the origin and of radius less than $(\phi L)/2$. Indeed, if this is not the case, it follows by the Steiner symmetry of $\{\uo>-1+2\phi^{1/3}\}$ that

\[
\per_{\tphil}(\{\uo>-1+2\phi^{1/3}\})\geq\frac{1}{2}\phi L,
\]
which, because of the monotonicity of $\{\uo>t\}$ with respect to $t$, implies in turn that
\[
\per_{\tphil}(\{\uo>t\})\geq\frac{1}{2}\phi L \quad \text{for all}\quad t\in[-1+\phi^{1/3},-1+2\phi^{1/3}].
\]
It follows that
\[
\int_{-1+\phi^{1/3}}^{-1+2\phi^{1/3}}\sqrt{2\tilde{G}(t)}\,\Big({\rm Per}_{\tphil}(\{u>t\})-{\rm P}_E(\{u>t\})\Big)\,dt\gtrsim \phi^{1/6},
\]
which contradicts \eqref{JJ}.

We now establish a lower bound on $I(\uo)$. For any $t\in[-1+2\phi^{1/3},1-2\phi^{1/3}]$ we will denote by $\rho(t),\,\rho_{in}(t)$ and $\rho_{out}(t)$ the volume-, inner- and outer radius of $\{\uo>t\}$, respectively. We will also use the notation
\[
\mathit{\Delta} \rho(t):=\rho_{out}(t)-\rho_{in}(t).
\]
Because the superlevel sets of $\uo$ are contained within a disk (as discussed above), we may apply the Bonnesen inequality \eqref{Bonnesen.Ineq2} to $I$ to obtain
\begin{align*}
&I(\uo)\notag\\
&\geq \int_{-1+2\phi^{1/3}}^{1-2\phi^{1/3}}\sqrt{2\tilde{G}(t)}\,\left[(4\pi|\{\uo>t\}|+\pi(\mathit{\mathit{\Delta}} \rho(t))^2)^{1/2}-{\rm P}_E(\{\uo>t\})\right]\,dt \notag \\
&=2\sqrt{2\pi}\int_{-1+2\phi^{1/3}}^{1-2\phi^{1/3}}\sqrt{\tilde{G}(t)}\,|\{\uo>t\}|^{1/2}\left[\left(1+\frac{(\mathit{\Delta} \rho(t))^2}{4|\{\uo>t\}|}\right)^{1/2}-1\right]\,dt\notag\\
&\gtrsim \int_{-1+2\phi^{1/3}}^{1-2\phi^{1/3}}\sqrt{\tilde{G}(t)}\,\frac{(\mathit{\Delta} \rho(t))^2}{|\{\uo>t\}|^{1/2}}\,dt.
\end{align*}
We combine this with the bound \eqref{ibd2},  corollary \ref{cor.1}, and the  bound on $\nu(u)$ to deduce
\begin{align}\label{str.1}
\int_{-1+2\phi^{1/3}}^{1-2\phi^{1/3}}\sqrt{\tilde{G}(t)}\,(\mathit{\Delta} \rho(t))^2\,dt\lesssim \phi^{1/3}.
\end{align}
Next we observe that, due to the monotonicity of the superlevel sets $\{\uo>t\}$ with respect to $t$, we have
\begin{equation}\label{str.2}
\rho_{out}(t)\geq\rho_{out}(\eta)\quad \text{for every}\quad t\in[-1+2\phi^{1/3},\eta],
\end{equation}
and
\begin{equation}\label{str.3}
\rho_{in}(t)\leq\rho_{in}(\eta)\quad \text{for every}\quad t\in[\eta,1-2\phi^{1/3}].
\end{equation}
Moreover, again due to monotonicity, for all $t\in[-1+2\phi^{1/3},1-2\phi^{1/3}]$ there holds
\begin{eqnarray}
\rho_{out}(t)&\geq&\rho_{out}(1-2\phi^{1/3})\geq \left(|\{\uo>1-2\phi^{1/3}\}|/\pi\right)^{1/2}\notag \\
&\overset{\eqref{saddle.superset}}=&r_\omega+O(\phi^{1/6}),\label{str.4}
\end{eqnarray}
and
\begin{eqnarray}
\rho_{in}(t)&\leq&\rho_{in}(-1+2\phi^{1/3})\leq \left(|\{\uo>-1+2\phi^{1/3}\}|/\pi\right)^{1/2}\notag\\
&\overset{\eqref{saddle.superset}}=&r_{\omega}+O(\phi^{1/6}).\label{str.5}
\end{eqnarray}

By \eqref{str.2} and \eqref{str.5} it follows that for all $t\in [-1+2\phi^{1/3},\eta]$ the difference $\mathit{\Delta} \rho(t)$ satisfies
\[
\mathit{\Delta} \rho(t)\geq  \rho_{out}(\eta)-r_\omega+O(\phi^{1/6}).
\]
Substituting into \eqref{str.1} implies \eqref{str.0}.

For  $s\in [\eta,1-2\phi^{1/3}]$ on the other hand, \eqref{str.3} and \eqref{str.4} imply that
\[
\mathit{\Delta} \rho(s)\geq  r_\omega-\rho_{in}(\eta)+O(\phi^{1/6}),
\]
which together with \eqref{str.1} yields \eqref{str.0.1}.

\end{proof}



\section*{Appendix: Isoperimetry on the torus}
\begin{proof}[Proof of corollary \ref{SharpIsoper}]
Our approach is similar to the one used in establishing \cite[theorem 6.2]{CCELM}. Note first that $\epsilon>0$ can be chosen small enough to ensure that the diameter of a ball of volume $|A|$ is less than $1/2$. This is the only restriction on the value of $\epsilon$. With that in mind, let $B$ be a ball that achieves the optimal overlap with $A$ in the definition of the Fraenkel asymmetry. In other words, consider a ball $B$ of volume $|A|$, such that $|A\triangle B|=\lambda(A)|A|$. Since we are working on the torus, we can with no loss of generality assume that $B$ is centered at the origin.

Next, define
\[
a_1(t):=\mathcal{H}^{d-1}(A\cap \{x_1=t\})\quad \text{for}\quad t\in \Big[-\frac{1}{2},\frac{1}{2}\Big],
\]
and observe that, with $I:=[-1/2,1/2]\setminus[-1/4,1/4]$ there holds
\[
|A|=\int_{-1/2}^{1/2}a_1(t)\,dt\geq \int_I a_1(t)\,dt\geq \frac{1}{2}\mathrm{ess}\,\mathrm{inf}_{\substack{I}}a_1,
\]
from which it follows that for arbitrary $\delta>0$  there exists $t_1\in I$ such that
\[
a_1(t_1)\leq 2|A|+\delta.
\]
Shifting in the torus in the $x_1$-direction by $t_1$, if necessary, we may thus assume that
\[
a_1(-1/2)=a_1(1/2)\leq 2|A|+\delta.
 \]
Repeating this process for the other $d-1$ directions, and since the $d$ successive translations are independent of each other, we may assume that, in fact,
\begin{equation}\label{Appendix0}
a_i(-1/2)=a_i(1/2)\leq 2|A|+\delta,\,\,\text{for all}\,\,i\in\{1,\dots,d\}.
\end{equation}
Notice that the set $\tilde{B}$ that is obtained by shifting the ball $B$ in all $d$ directions is still a ball of volume $|A|$ that is contained  in the open set $(-1/2,1/2)^d\subset\R^d$. Moreover, the Fraenkel asymmetry of the set $\tilde{A}\subset\mathbb{R}^d$ that is obtained by shifting $A$ is the same as that of $A$, for if we regard $\tilde{A}$ and $\tilde{B}$ as subsets of $\mathbb{R}^d$, the ball $\tilde{B}$ is an optimal ball for $\tilde{A}$, and $|\tilde{A}\triangle \tilde{B}|=\lambda(A)|\tilde{A}|$.

It thus follows from the quantitative isoperimetric inequality in $\mathbb{R}^d$ that
\begin{align}\label{Appendix1}
{\rm Per}_{\mathbb{R}^d}(\tilde{A})\geq {\rm P}_E(\tilde{A})+C(d)\lambda(\tilde{A})^2 {\rm P}_E(\tilde{B}).
\end{align}
Note that we have
\begin{equation}\label{Appendix2}
{\rm Per}_{\mathbb{T}_1}(\tilde{A})\geq {\rm Per}_{\mathbb{R}^d}(\tilde{A})-2d(2|\tilde{A}|+\delta),
\end{equation}
since, as a consequence of \eqref{Appendix0}, dropping the identification of opposite ends of the torus (and thereby regarding $\tilde{A}$ merely as a subset of $\mathbb{R}^d$), increases the perimeter ${\rm Per}_{\mathbb{T}_1}(\tilde{A})$ by at most $2d(2|\tilde{A}|+\delta)$.
Since $\delta>0$ is arbitrary, it follows from \eqref{Appendix2} that
\begin{equation}\label{Appendix3}
{\rm Per}_{\mathbb{T}_1}(\tilde{A})\geq {\rm Per}_{\mathbb{R}^d}(\tilde{A})-4d|\tilde{A}|,
\end{equation}
Substituting \eqref{Appendix1} into \eqref{Appendix3} and recalling
\[
{\rm Per}_{\mathbb{T}_1}(\tilde{A})={\rm Per}_{\mathbb{T}_1}(A),\,\,\,|\tilde{A}|=|A|,\,\,\,\lambda(\tilde{A})=\lambda(A)\,\,\,\text{and}\,\,\,{\rm P}_E(\tilde{B})={\rm P}_E(A),
\]
we finally obtain
\[
{\rm Per}_{\mathbb{T}_1}(A)\geq {\rm P}_E(A)+C(d)\lambda(A)^2 {\rm P}_E(A)-4d|A|.
\]
\end{proof}




\section*{Appendix: P\'olya-Szeg\"o inequality for smooth functions on the torus.}
In this section we adapt the proofs of theorem 2.2 and proposition 2.3 from \cite{CF} in order to establish lemma \ref{steinereq} and
theorem \ref{t:steineq}.
\medskip

We will
use the notation from section 4.1. Without loss of generality, it suffices to consider $i=d$. Slightly deviating from our notation there, we write
\begin{eqnarray}\notag
\Omega_{t}^{*}:=S_{d}(\Omega_{t})
\end{eqnarray}
for the Steiner symmetrization $S_{d}(\Omega_{t})$ of the set $\Omega_{t}$ with respect to the
hyperplane $\{x_d=0\}$. We will also write $\hat{x}$ instead of $\hat{x}_{d}$ and set
\begin{eqnarray*}
m(\hat{x})&:=&\min\left\{u(\hat{x},y), y\in \left[-\frac{\phi L}{2},\frac{\phi L}{2}\right]\right\},\\
M(\hat{x})&:=&\max\left\{u(\hat{x},y), y\in \left[-\frac{\phi L}{2},\frac{\phi L}{2}\right]\right\}.
\end{eqnarray*}
\begin{remark}\label{regdist}
In the sequel we will use lemma 4.1 from \cite{CF}, which gives the following regularity for the distribution function $\mu_u(\hat{x},t)$:
For almost all $\hat{x}\in\tphild$, there holds
\begin{eqnarray}
\label{regdist1}\partial_{t}\mu_u(\hat{x},t)&=&-\int\limits_{\partial\{y:u(\hat{x},y)>t\}} \frac{1}{ \vert\partial_{y}u\vert } \:d\HM^{0}\\
\label{regdist2}\partial_{i}\mu_u(\hat{x},t)&=&\int\limits_{\partial\{y:u(\hat{x},y)>t\}}  \frac{\partial_{i}u}{ \vert\partial_{y}u\vert }
\:d\HM^{0}\qquad i=1,\ldots,d-1,
\end{eqnarray}
for almost all $t\in (m(\hat{x}),M(\hat{x}))$.
The proof carries over to the case $u\in C^1(\tphil)$ with some simplifications due to the smoothness assumption on $u$.
\end{remark}
\begin{proof}[Proof of lemma \ref{steinereq}]
\underline{Step 1.} For all $\hat{x}\in\tphild$ and for every $t\in (m(\hat{x}),M(\hat{x}))$, we have the decomposition
\begin{eqnarray*}
\mu_{u}(\hat{x},t)&=&\LM^{1}\left(\left\{y:u(\hat{x},y)=M(\hat{x})\right\}\right)\\
&&+
\LM^{1}\left(\left\{y:\partial_{y}u(\hat{x},y)=0,\: t< u(\hat{x},y)<M(\hat{x}) \right\}\right)\\
&&+
\LM^{1}\left(\left\{y:\partial_{y}u(\hat{x},y)\ne 0,\: t< u(\hat{x},y)<M(\hat{x}) \right\}\right).
\end{eqnarray*}
Defining for any $t\in (m(\hat{x}),M(\hat{x}))$ the set
\begin{eqnarray*}
D_{u}:= \left\{y:\partial_{y}u(\hat{x},y)\ne 0,\: t< u(\hat{x},y)<M(\hat{x}) \right\},
\end{eqnarray*}
we apply the coarea formula (see e.g.\ \cite[chapter 3.4]{EG} or \cite[chapter 2.12]{AFP}) to write
\begin{eqnarray}\label{1-d-coarea}
\LM^{1}\left(D_{u}\right)
=
\int\limits_{t}^{M(\hat{x})}\int\limits_{\partial \{u(\hat{x},y)>s\}}\frac{\chi_{D_{u}}}{\vert \partial_{y}u(\hat{x},y)\vert}\:d\HM^{0}\:ds.
\end{eqnarray}
Here $\HM^{0}(E)$ denotes the $0$ - dimensional Hausdorff measure (counting measure) of a set $E$ and $\chi_{D_{u}}$ denotes the characteristic function of $D_{u}$. Formula \eqref{1-d-coarea} holds true for $u^{*}$ as well.

\underline{Step 2.} Next we define the function
\begin{eqnarray*}
h(t):=\LM^{1}\left(\left\{y:\partial_{y}u(\hat{x},y)=0,\: t< u(\hat{x},y)\right\}\right).
\end{eqnarray*}
This function is non increasing and is thus in $BV_{loc}(\R)$ and right continuous, with $h'(t)=0$ for almost all $t\in\R$.  The last fact was proved in \cite[lemma 2.4]{CF1}. For completeness we give the argument in our setting. The right continuity implies
\begin{eqnarray*}
\vert h(t_2)-h(t_1)\vert=\vert Dh\vert((t_1,t_2])\qquad\forall t_1<t_2,
\end{eqnarray*}
where
\begin{eqnarray*}
\vert Dh\vert((t_1,t_2])=\sup\left\{\sum_{i=1}^{n}\vert h(s_{i+1})-h(s_{i})\vert : n\geq 2, t_1<s_1<\ldots<s_{n+1}\leq t_2\right\}.
\end{eqnarray*}
Consequently,
\begin{eqnarray*}
\LM^{1}\left(\left\{y:\partial_{y}u(\hat{x},y)=0,\: t_1< u(\hat{x},y)<t_2\right\}\right)=\vert Dh\vert((t_1,t_2)).
\end{eqnarray*}
For fixed $\hat{x}$, let $C_{u}(\hat{x}):=\left\{y : \partial_{y}u(\hat{x},y)=0\right\}$. Sard's theorem implies that $\LM^{1}\left(u(C_{u}(\hat{x}))\right)=0$ for all $\hat{x}\in\tphild$. Thus $\vert Dh\vert$ is concentrated on a set of measure zero, and this implies that $h'(t)=0$ for almost all $t\in\R$.

\underline{Step 3.} Since the previous step applies equally to $u^{*}$, we obtain
\begin{eqnarray*}
&&\frac{d}{dt}\LM^{1}\left(\left\{y:\partial_{y}u(\hat{x},y)=0,\: t< u(\hat{x},y)\right\}\right)\\
&&\qquad=
\frac{d}{dt}\LM^{1}\left(\left\{y:\partial_{y}u^{*}(\hat{x},y)=0,\: t< u^{*}(\hat{x},y)\right\}\right)=0,
\end{eqnarray*}
for $\LM^{1}$ almost all $t>m(\hat{x})$. Moreover, by the equimeasurability of Steiner symmetrization, we have
\begin{eqnarray*}
\LM^{1}\left(\left\{y:u(\hat{x},y)=M(\hat{x})\right\}\right)
=
\LM^{1}\left(\left\{y:u^{*}(\hat{x},y)=M(\hat{x})\right\}\right).
\end{eqnarray*}
Hence,
\begin{eqnarray*}
\int\limits_{t}^{M(\hat{x})}\int\limits_{\partial \{u(\hat{x},y)>s\}}\frac{1}{\vert \partial_{y}u(\hat{x},y)\vert}\:d\HM^{0}(y)\:ds
=
\int\limits_{t}^{M(\hat{x})}\int\limits_{\partial \{u^{*}(\hat{x},y)>s\}}\frac{1}{\vert \partial_{y}u^{*}(\hat{x},y)\vert}\:d\HM^{0}(y)\:ds,
\end{eqnarray*}
for all $\hat{x}\in\tphild$ and almost all $t\in(m(\hat{x}),M(\hat{x}))$.

\underline{Step 4.} We collect these facts and also use $\mu_{u}(\hat{x},t)=\mu_{u^{*}}(\hat{x},t)$. This proves the claim.
\end{proof}
\begin{definition}
Two functions $u,v\in C^{1}(\tphil)$ are called equivalent if and only if there exists an $a\in\R$ such that $u(\hat{x},y+a)=v(\hat{x},y)$ for all $\hat{x}\in\tphild$ and all $y\in[-\frac{\phi L}{2},\frac{\phi L}{2}]$.

Two sets $A,B\subset\tphil$ are called equivalent if and only if there exists a number $\alpha\in\R$ such that $A+\alpha e_d=B$ up to a set of $\LM^{d}$ - measure zero. Here $e_d=(0,\ldots,0,1)$ denotes $d$-th unit vector in $\R^d$.
\end{definition}
\begin{proof}[Proof of theorem \ref{t:steineq}]
We follow the proof of \cite{CF}. It is sufficient to show that $\Omega_{t}^{*}$ is equivalent to $\Omega_t$ in the sense that there exists a vector $a\in\R^{d}$, which does not depend on $t$ or $\hat{x}$, such that $\Omega_{t}^{*}=\Omega_t+a$. This is done in several steps. In the first three steps we use the one dimensional isoperimetric inequality on the circle $S^1$ and the Cauchy-Schwarz inequality to deduce that
\begin{eqnarray*}
\int\limits_{\tphil}\vert\nabla u\vert^2\:dx\geq \int\limits_{\tphil}\vert\nabla u^{*}\vert^2\:dx.
\end{eqnarray*}
 As in \cite{CF}, we then observe that the condition \eqref{rearr1} implies equality in the isoperimetric Cauchy-Schwarz inequalities. The subsequent steps of the proof exploit this fact to establish the theorem.

\underline{Step 1.} By remark \ref{rekeyass}, we have $u>\inf_{\tphil} u$ up to a set of measure zero. The one dimensional coarea formula gives
\begin{eqnarray}\label{coarea}
\int\limits_{\tphil}\vert\nabla u\vert^2\:dx=\int\limits_{\tphild}\int\limits_{m(\hat{x})}^{M(\hat{x})}\int\limits_{\partial\{y:u(\hat{x},y)>t\}}
\frac{\vert\nabla u\vert^2}{\vert\partial_{y}u\vert} \:d\HM^{0}\:dt\:d\hat{x}.
\end{eqnarray}
The equimeasurability of Steiner symmetrization implies
\begin{eqnarray}
-\int\limits_{\partial\{y:u(\hat{x},y)>t\}}\frac{1}{\vert\partial_{y}u\vert}\:d\HM^{0}&\overset{\eqref{regdist1}}=&\partial_{t}\mu_{u}(\hat{x},t)\notag\\
&=&\partial_{t}\mu_{u^{*}}(\hat{x},t)=-\int\limits_{\partial\{y:u^{*}(\hat{x},y)>t\}}\frac{1}{\vert\partial_{y}u^{*}\vert}\:d\HM^{0}\label{distr1}
\\
\text{and}\quad\int\limits_{\partial\{y:u(\hat{x},y)>t\}}\frac{\partial_{i}u}{\vert\partial_{y}u\vert}\:d\HM^{0}&\overset{\eqref{regdist2}}=&\partial_{i}\mu_{u}(\hat{x},t)\notag\\
&=&\partial_{i}\mu_{u^{*}}(\hat{x},t)=
\int\limits_{\partial\{y:u^{*}(\hat{x},y)>t\}}\frac{\partial_{i}u^{*}}{\vert\partial_{y}u^{*}\vert}\:d\HM^{0}\label{distr2}
\end{eqnarray}
for $i=1,\ldots,d-1$ and for almost all $t\in(m(\hat{x}),M(\hat{x}))$ and $\hat{x}\in\tphild$.
Using that $u^{*}$ is symmetric and satisfies \eqref{keyass} (cf.\ lemma \ref{steinereq}), we simplify the  right-hand sides of \eqref{distr1} and \eqref{distr2}
to
deduce the formulas
\begin{eqnarray}
\label{distr3}\partial_{t}\mu_{u}(\hat{x},t)&=&-\frac{2}{\vert\partial_{y}u^{*}\vert}\Big\vert_{\partial\{y:u^{*}(\hat{x},y)>t\}}\\
\label{distr4}\partial_{i}\mu_{u}(\hat{x},t)&=&\frac{2\partial_{i}u^{*}}{\vert\partial_{y}u^{*}\vert}\Big\vert_{\partial\{y:u^{*}(\hat{x},y)>t\}}.
\end{eqnarray}

\underline{Step 2.} We use formulas \eqref{distr3} - \eqref{distr4} to express
\begin{eqnarray*}
&&\int\limits_{\partial\{y:u^{*}(\hat{x},y)>t\}}\frac{\vert\nabla u^{*}\vert^2}{\vert\partial_{y}u^{*}\vert}\:d\HM^{0}
=
\frac{2}{\vert\partial_{y}u^{*}\vert}\left(\sum_{i=1}^{d-1}\vert\partial_{i} u^{*}\vert^2+\vert\partial_{y}u^{*}\vert^2\right)\Big\vert_{\partial\{y:u^{*}(\hat{x},y)>t\}}\\
&&=
-\partial_{t}\mu_{u}(\hat{x},t)\left(\sum_{i=1}^{d-1}\frac{\vert\partial_{i}\mu_{u}(\hat{x},t)\vert^2}{\vert\partial_{t}\mu_{u}(\hat{x},t)\vert^2}+\frac{4}{\vert\partial_{t}\mu_{u}(\hat{x},t)\vert^2}\right)\Big\vert_{\partial\{y:u^{*}(\hat{x},y)>t\}}\\
&&=
\int\limits_{\partial\{y:u(\hat{x},y)>t\}}\frac{1}{\vert\partial_{y}u\vert}\:d\HM^{0}
\left(\sum_{i=1}^{d-1}\frac{\left(\int\limits_{\partial\{y:u(\hat{x},y)>t\}}\frac{\partial_{i}u}{\vert\partial_{y}u\vert}\:d\HM^{0}\right)^2}{\left(\int\limits_{\partial\{y:u(\hat{x},y)>t\}}\frac{1}{\vert\partial_{y}u\vert}\:d\HM^{0}\right)^2}
+
\frac{4}{\left(\int\limits_{\partial\{y:u(\hat{x},y)>t\}}\frac{1}{\vert\partial_{y}u\vert}\:d\HM^{0}\right)^2}\right).
\end{eqnarray*}
\underline{Step 3.} Next we use the Cauchy-Schwarz inequality
\begin{eqnarray}\label{cau1}
\left(\int\limits_{\partial\{y:u(\hat{x},y)>t\}}\frac{\partial_{i}u}{\vert\partial_{y}u\vert}\:d\HM^{0}\right)^2
\leq
\int\limits_{\partial\{y:u(\hat{x},y)>t\}}\frac{\vert\partial_{i}u\vert^2}{\vert\partial_{y}u\vert}\:d\HM^{0}
\int\limits_{\partial\{y:u(\hat{x},y)>t\}}\frac{1}{\vert\partial_{y}u\vert}\:d\HM^{0}
\end{eqnarray}
and recall that equality holds if and only if $\partial_{i}u=c_{i}(\hat{x},t)$ for some function $c_{i}(\hat{x},t)$ that does not depend on $y$. This implies
\begin{eqnarray}\label{appcf}
\int\limits_{\partial\{y:u^{*}(\hat{x},y)>t\}}\frac{\vert\nabla u^{*}\vert^2}{\vert\partial_{y}u^{*}\vert}\:d\HM^{0}
\leq
\sum_{i=1}^{d-1}\int\limits_{\partial\{y:u(\hat{x},y)>t\}}\frac{\vert\partial_{i}u\vert^2}{\vert\partial_{y}u\vert}\:d\HM^{0}
+
\frac{4}{\int\limits_{\partial\{y:u(\hat{x},y)>t\}}\frac{1}{\vert\partial_{y}u\vert}\:d\HM^{0}}.
\end{eqnarray}
Finally, using that $u$ is $\phi L$-periodic, we deduce from the isoperimetric inequality on $S^1$  that
\begin{eqnarray}\label{appiso}
2\leq\HM^{0}(\partial\{y:u(\hat{x},y)>t\})=\int\limits_{\partial\{y:u(\hat{x},y)>t\}}\:d\HM^{0}.
\end{eqnarray}
Thus we may estimate
\begin{eqnarray}\label{cau2}
4\leq\left(\int\limits_{\partial\{y:u(\hat{x},y)>t\}}\:d\HM^{0}\right)^2
\leq
\int\limits_{\partial\{y:u(\hat{x},y)>t\}}\frac{\vert\partial_{y}u\vert^2}{\vert\partial_{y}u\vert}\:d\HM^{0}
\int\limits_{\partial\{y:u(\hat{x},y)>t\}}\frac{1}{\vert\partial_{y}u\vert}\:d\HM^{0},
\end{eqnarray}
where the last inequality is a consequence of the Cauchy-Schwarz inequality. Again the case of equality implies $\vert\partial_{y}u\vert=c_{y}(\hat{x},t)$ for some non negative function $c_{y}(\hat{x},t)$ that does not depend on $y$. Substituting \eqref{cau2} into \eqref{appcf} yields
\begin{eqnarray}\label{cau3}
\nonumber\int\limits_{\partial\{y:u^{*}(\hat{x},y)>t\}}\frac{\vert\nabla u^{*}\vert^2}{\vert\partial_{y}u^{*}\vert}\:d\HM^{0}
&\leq&
\sum_{i=1}^{d-1}\int\limits_{\partial\{y:u(\hat{x},y)>t\}}\frac{\vert\partial_{i}u\vert^2}{\vert\partial_{y}u\vert}\:d\HM^{0}
+
\int\limits_{\partial\{y:u(\hat{x},y)>t\}}\frac{\vert\partial_{y}u\vert^2}{\vert\partial_{y}u\vert}\:d\HM^{0}\\
&=&
\int\limits_{\partial\{y:u(\hat{x},y)>t\}}\frac{\vert\nabla u\vert^2}{\vert\partial_{y}u\vert}\:d\HM^{0}.
\end{eqnarray}
\underline{Step 4.} Integrating \eqref{cau3} with respect to $t$ and using the one dimensional coarea formula again, we see that the condition \eqref{inteq} implies equality in \eqref{cau3}, and hence in all four inequalities \eqref{cau1}, \eqref{appcf}, \eqref{appiso} and \eqref{cau2} above. We collect the results.
\begin{itemize}
\item[(1)] Because of the equality in \eqref{appiso}, there exist two functions $y_1(\hat{x},t)$ and $y_2(\hat{x},t)$, such that
\begin{eqnarray}\label{cau4}
\nonumber\{y: u(\hat{x},y)>t\}\quad\hbox{is equivalent to}\quad (y_1(\hat{x},t), y_2(\hat{x},t)),
\end{eqnarray}
for all $\hat{x}\in\tphild$ and for almost all $t\in (m(\hat{x}), M(\hat{x}))$.
Moreover, for almost all $t$ the functions $y_i$ are smooth functions of $\hat{x}$ and differentiable in $t$. This is a consequence of \eqref{keyass}, the implicit function theorem, and the monotonicity in $t$.
\item[(2)] Due to the equality case in the Cauchy-Schwarz inequality, there exist functions $c_{i}(\hat{x},t)$ and $c_{y}(\hat{x},t)\geq 0$ (which do not depend on $y$), such that
\begin{eqnarray}
\label{cau5} \partial_{i}u(\hat{x},y_1(\hat{x},t)) &=&\partial_{i}u(\hat{x},y_2(\hat{x},t))=c_{i}(\hat{x},t)\qquad i=1,\ldots,d-1;\\
\label{cau6} \vert\partial_{y}u(\hat{x},y_1(\hat{x},t))\vert&=&\vert\partial_{y}u(\hat{x},y_2(\hat{x},t))\vert=c_{y}(\hat{x},t)
\end{eqnarray}
for all $\hat{x}\in\tphild$ and for almost all $t\in (m(\hat{x}), M(\hat{x}))$. In particular we have
\begin{eqnarray}\label{cau8}
\vert\nabla u(\hat{x},y_1)\vert=\vert\nabla u(\hat{x},y_2)\vert
\end{eqnarray}
\item[(4)] Since the $y_i$ are endpoints of the set $\{y: u(\hat{x},y)>t\}$, we have
\begin{eqnarray}\label{cau7}
\partial_{y}u(\hat{x},y_1(\hat{x},t))=-\partial_{y}u(\hat{x},y_2(\hat{x},t)).
\end{eqnarray}
\end{itemize}
\underline{Step 5.} Let
\begin{eqnarray*}
E:=\{(\hat{x},y,t) : y_1(\hat{x},t)<y<y_2(\hat{x},t)\}.
\end{eqnarray*}
Since the boundary of $E$ is a level set of $u$, we have an explicit representation of the unit normal vector in each point of $\partial E$ (for almost all $t$) as
\begin{eqnarray}\label{appnormal1}
\nu_{E}(\hat{x},y,t)=\left( \frac{\hat{\nabla}u(\hat{x},y)}{\sqrt{1+\vert\nabla u\vert^2}},\frac{\partial_{y}u(\hat{x},y)}{\sqrt{1+\vert\nabla u\vert^2}},\frac{-1}{\sqrt{1+\vert\nabla u\vert^2}}\right)\qquad\forall (\hat{x},y,t)\in\partial E,
\end{eqnarray}
where
\begin{eqnarray*}
\hat{\nabla}u=\left(\partial_{1}u,\ldots,\partial_{d-1}u\right).
\end{eqnarray*}
This vector points into the set $\{u>t\}$ along $\partial E$. Next we write $E=E_1\cap E_2$, where
\begin{eqnarray*}
E_1:=\{(\hat{x},y,t) : y_1(\hat{x},t)<y\},\quad\hbox{and}\quad
E_2:=\{(\hat{x},y,t) : y<y_2(\hat{x},t)\}.
\end{eqnarray*}
We use the representation of $\partial E_{i}$, $i=1,2$ as a graph to derive
\begin{eqnarray}\label{appnormal2}
\nu_{E_{1}}(\hat{x},y_1,t)=-\left( \frac{\hat{\nabla} y_{1}(\hat{x},t)}{\sqrt{1+\vert\nabla y_1\vert^2}},\frac{-1}{\sqrt{1+\vert\nabla y_1\vert^2}},\frac{\partial_{t}y_1(\hat{x},t)}{\sqrt{1+\vert\nabla y_1\vert^2}}\right)
\end{eqnarray}
and
\begin{eqnarray}\label{appnormal3}
\nu_{E_{2}}(\hat{x},y_2,t)=\left( \frac{\hat{\nabla}  y_{2}(\hat{x},t)}{\sqrt{1+\vert\nabla y_2\vert^2}},\frac{-1}{\sqrt{1+\vert\nabla y_2\vert^2}},\frac{\partial_{t}y_2(\hat{x},t)}{\sqrt{1+\vert\nabla y_2\vert^2}}\right).
\end{eqnarray}
Clearly both normal vectors point into the set $\{u>t\}$ and are of unit length.
From \eqref{cau5} - \eqref{appnormal3} we deduce the following three chains of equalities:
\begin{eqnarray*}
-\frac{\hat{\nabla} y_{1}(\hat{x},t)}{\sqrt{1+\vert\nabla y_1\vert^2}}
&\overset{\eqref{appnormal2},\eqref{appnormal1}}=&
\frac{\hat{\nabla}u(\hat{x},y_1)}{\sqrt{1+\vert\nabla u(\hat{x},y_1)\vert^2}}\\
&\overset{\eqref{cau5},\eqref{cau8}}=&
\frac{\hat{\nabla}u(\hat{x},y_2)}{\sqrt{1+\vert\nabla u(\hat{x},y_2)\vert^2}}
\overset{\eqref{appnormal1},\eqref{appnormal3}}=
\frac{\hat{\nabla}  y_{2}(\hat{x},t)}{\sqrt{1+\vert\nabla y_2\vert^2}},
\end{eqnarray*}
and
\begin{eqnarray*}
\frac{1}{\sqrt{1+\vert\nabla y_1\vert^2}}
&\overset{\eqref{appnormal2},\eqref{appnormal1}}=&
\frac{\partial_{y}u(\hat{x},y_1)}{\sqrt{1+\vert\nabla u(\hat{x},y_1)\vert^2}}\\
&\overset{\eqref{cau7},\eqref{cau8}}=&
-\frac{\partial_{y}u(\hat{x},y_2)}{\sqrt{1+\vert\nabla u(\hat{x},y_2)\vert^2}}
\overset{\eqref{appnormal1},\eqref{appnormal3}}=
-\frac{-1}{\sqrt{1+\vert\nabla y_2\vert^2}},
\end{eqnarray*}
and
\begin{eqnarray*}
-\frac{\partial_{t}y_1(\hat{x},t)}{\sqrt{1+\vert\nabla y_1\vert^2}}
&\overset{\eqref{appnormal2},\eqref{appnormal1}}=&
\frac{-1}{\sqrt{1+\vert\nabla u(\hat{x},y_1)\vert^2}}\\
&\overset{\eqref{cau8}}=&
\frac{-1}{\sqrt{1+\vert\nabla u(\hat{x},y_2)\vert^2}}
\overset{\eqref{appnormal1},\eqref{appnormal3}}=
\frac{\partial_{t}y_2(\hat{x},t)}{\sqrt{1+\vert\nabla y_2\vert^2}}.
\end{eqnarray*}
From these equalities we deduce
\begin{eqnarray*}
\sqrt{1+\vert\nabla y_1\vert^2}&=&\sqrt{1+\vert\nabla y_2\vert^2},
\end{eqnarray*}
and hence
\begin{eqnarray}\label{appsymm}
\hat{\nabla}y_1+\hat{\nabla}y_2=0\quad&\hbox{and}&\quad\partial_{t}y_1+\partial_{t}y_2=0.
\end{eqnarray}
From \eqref{appsymm} we conclude that there exists a constant $b\in\R$ such that
\begin{eqnarray*}
\frac{y_1(\hat{x},t)+y_2(\hat{x},t)}{2}=b.
\end{eqnarray*}
This proves the theorem.
\end{proof}
\section*{Acknowledgements}
We would like to thank Stan Alama, Lia Bronsard, Almut Burchard, Nicola Fusco, Bob Kohn, Felix Otto, and Peter Sternberg for helpful and interesting discussions on topics related to this work. We also thank the anonymous referee for useful comments.

\medskip
 \bigskip
\begin{bibdiv}
\begin{biblist}
\bib{AFP}{book}{
   author={Ambrosio, L.},
   author={Fusco, N.}
    author={Pallara, D.},
   title={Functions of bounded variation and free discontinuity problems},
   series={Oxford Mathematical Monographs},
   edition={1},
   publisher={Oxford University Press, New York},
   date={2000},}
\bib{BFi}{article}{
   author={Bates, Peter W.},
   author={Fife, Paul C.},
   title={The dynamics of nucleation for the Cahn-Hilliard equation},
   journal={SIAM J. Appl. Math.},
   volume={53},
   date={1993},
   number={4},
   pages={990--1008},
}
\bib{BFu}{article}{
   author={Bates, Peter W.},
   author={Fusco, Giorgio},
   title={Equilibria with many nuclei for the Cahn-Hilliard equation},
   journal={J. Differential Equations},
   volume={160},
   date={2000},
   number={2},
   pages={283--356},
}
\bib{BGLN}{article}{
   author={Bellettini, G.},
   author={Gelli, M. S.},
   author={Luckhaus, S.},
   author={Novaga, M.},
   title={Deterministic equivalent for the Allen-Cahn energy of a scaling
   law in the Ising model},
   journal={Calc. Var. Partial Differential Equations},
   volume={26},
   date={2006},
   number={4},
   pages={429--445},}
\bib{BL}{article}{
   author={Berestycki, H.},
   author={Lions, P.-L.},
   title={Nonlinear scalar field equations. I. Existence of a ground state},
   journal={Arch. Rational Mech. Anal.},
   volume={82},
   date={1983},
   number={4},
   pages={313--345},}
\bib{BCK}{article}{
   author={Biskup, M.},
   author={Chayes, L.},
   author={Kotecky, R.},
   title={On the formation/dissolution of equilibrium droplets},
   journal={Europhys. Lett.},
   volume={60},
   date={2002},
   pages={21--27},}
\bib{BGW}{article}{
   author={Bl{\"o}mker, Dirk},
   author={Gawron, Bernhard},
   author={Wanner, Thomas},
   title={Nucleation in the one-dimensional stochastic Cahn-Hilliard model},
   journal={Discrete Contin. Dyn. Syst.},
   volume={27},
   date={2010},
   number={1},
   pages={25--52},}
\bib{B}{article}{
   author={Brock, Friedemann},
   title={Rearrangements and applications to symmetry problems in PDE},
   series={Handbook of differential equations: stationary partial differential equations},
   volume={IV},
   date={2007},
   number={},
   pages={1--60},}
\bib{BZ}{article}{
   author={Brother, J.},
   author={Ziemer, W.},
   title={Minimal rearrangements of Sobolev functions},
   journal={J. Reine Angew. Math.},
   volume={384},
   date={1988},
   pages={153--179},}
\bib{CH}{article}{
   author={Cahn, John W.},
   author={Hilliard, John E.},
   title={Free Energy of a Nonuniform System. I. Interfacial free energy},
   journal={J. Chem. Phys.},
   volume={28},
   date={1959},
   pages={258--267},}
\bib{CH3}{article}{
   author={Cahn, John W.},
   author={Hilliard, John E.},
   title={ Free Energy of a Nonuniform System. III. Nucleation in a Two Component Incompressible Fluid},
   journal={J. Chem. Phys.},
   volume={31},
   date={1959},
   pages={688--699},}
\bib{CCELM}{article}{
   author={Carlen, E. A.},
   author={Carvalho, M. C.},
   author={Esposito, R.},
   author={Lebowitz, J. L.},
   author={Marra, R.},
   title={Droplet minimizers for the Cahn-Hilliard free energy functional},
   journal={J. Geom. Anal.},
   volume={16},
   date={2006},
   number={2},
   pages={233--264},}
\bib{CGS}{article}{
   author={Carr, Jack},
   author={Gurtin, Morton E.},
   author={Slemrod, Marshall},
   title={Structured phase transitions on a finite interval},
   journal={Arch. Rational Mech. Anal.},
   volume={86},
   date={1984},
   number={4},
   pages={317--351},}
\bib{CK}{article}{
   author={Chen, Xinfu},
   author={Kowalczyk, Micha{\l}},
   title={Existence of equilibria for the Cahn-Hilliard equation via local
   minimizers of the perimeter},
   journal={Comm. Partial Differential Equations},
   volume={21},
   date={1996},
   number={7-8},
   pages={1207--1233},}
  \bib{CLEZS}{article}{
   author={Cheng, Xiuyuan},
   author={Lin, Ling},
   author={E, Weinan},
   author={Zhang, Pingwen},
   author={Shi, An-Chang},
   title={Nucleation of Ordered Phases in Block Copolymers},
   journal={Phys. Rev. Lett.},
   volume={104},
   date={2010},
   pages={148301},}
\bib{CS}{article}{
   author={Choksi, Rustum},
   author={Sternberg, Peter},
   title={Periodic phase separation: the periodic Cahn-Hilliard and
   isoperimetric problems},
   journal={Interfaces Free Bound.},
   volume={8},
   date={2006},
   number={3},
   pages={371--392},}
\bib{CF}{article}{
   author={Cianchi, Andrea},
   author={Fusco, Nicola},
   title={Steiner symmetric extremals in P\'olya-Szeg\"o type inequalities},
   journal={Adv. Math.},
   volume={203},
   date={2006},
   number={2},
   pages={673--728},}
   \bib{CF1}{article}{
   author={Cianchi, Andrea},
   author={Fusco, Nicola},
   title={Functions of bounded variations and rearrangements},
   journal={Arch. Rat. Mech.Anal.},
   volume={165},
   date={2002},
   number={1},
   pages={1--40},}

\bib{C}{article}{
   author={Cook, H. E.},
   title={Brownian motion in spinodal decomposition},
   journal={Acta Metallurgica},
   volume={18},
   date={1970},
   number={3},
   pages={297--306},}
\bib{DEPSW}{article}{
   author={Desi, Jonathan P.},
   author={Edrees, Hanein H.},
   author={Price, Joseph J.},
   author={Sander, Evelyn},
   author={Wanner, Thomas},
   title={The dynamics of nucleation in stochastic Cahn-Morral systems},
   journal={SIAM J. Appl. Dyn. Syst.},
   volume={10},
   date={2011},
   number={2},
   pages={707--743},}
\bib{EG}{book}{
   author={Evans, L. C.},
   author={Gariepy, E. G.},
   title={Measure theory and fine properties of functions},
   series={Studies in Advanced Mathematics},
   volume={5},
   edition={1},
   publisher={CRC Press, Boca Raton},
   date={1992},}
\bib{FI}{article}{
   author={Figalli, A.},
   author={Indrei, E.},
   title={A sharp stability result for the relative isoperimetric inequality
   inside convex cones},
   journal={J. Geom. Anal.},
   volume={23},
   date={2013},
   number={2},
   pages={938--969},}
\bib{FW}{book}{
   author={Freidlin, M. I.},
   author={Wentzell, A. D.},
   title={Random perturbations of dynamical systems},
   series={Grundlehren der Mathematischen Wissenschaften [Fundamental
   Principles of Mathematical Sciences]},
   volume={260},
   edition={2},
   note={Translated from the 1979 Russian original by Joseph Sz\"ucs},
   publisher={Springer-Verlag, New York},
   date={1998},}
\bib{FMP}{article}{
   author={Fusco, N.},
   author={Maggi, F.},
   author={Pratelli, A.},
   title={The sharp quantitative isoperimetric inequality},
   journal={Ann. of Math. (2)},
   volume={168},
   date={2008},
   number={3},
   pages={941--980},}
\bib{GW}{article}{
   author={Gelantalis, Michael},
   author={Westdickenberg, Maria G.},
   title={Energy barrier and $\Gamma$-convergence in the $d$-dimensional
   Cahn--Hilliard equation},
   journal={Calc. Var. Partial Differential Equations},
   volume={54},
   date={2015},
   number={1},
   pages={791--829},}
\bib{GP}{article}{
   author={Ghoussoub, N.},
   author={Preiss, D.},
   title={A general mountain pass principle for locating and classifying
   critical points},
   language={English, with French summary},
   journal={Ann. Inst. H. Poincar\'e Anal. Non Lin\'eaire},
   volume={6},
   date={1989},
   number={5},
   pages={321--330},}
\bib{GT}{book}{
   author={Gilbarg, David},
   author={Trudinger, Neil S.},
   title={Elliptic partial differential equations of second order},
   series={Grundlehren der Mathematischen Wissenschaften [Fundamental
   Principles of Mathematical Sciences]},
   volume={224},
   edition={2},
   publisher={Springer-Verlag, Berlin},
   date={1983},
   pages={xiii+513},}
\bib{GM}{article}{
   author={Gurtin, Morton E.},
   author={Matano, Hiroshi},
   title={On the structure of equilibrium phase transitions within the
   gradient theory of fluids},
   journal={Quart. Appl. Math.},
   volume={46},
   date={1988},
   number={2},
   pages={301--317},}
\bib{HL}{book}{
   author={Han, Qing},
   author={Lin, Fanghua},
   title={Elliptic partial differential equations},
   series={Courant Lecture Notes in Mathematics},
   volume={1},
   publisher={New York University, Courant Institute of Mathematical
   Sciences, New York; American Mathematical Society, Providence, RI},
   date={1997},
   pages={x+144},}
\bib{K}{book}{
   author={Kawohl, Bernhard},
   title={Rearrangements and convexity of level sets in PDE},
   series={Lecture Notes in Mathematics},
   volume={1150},
   publisher={Springer-Verlag, Berlin},
   date={1985},
   pages={iv+136},}
 \bib{K1}{article}{
   author={Kawohl, Bernhard},
   title={Symmetrization or how to prove symmetry of solutions to a PDE},
   series={Res. Notes Math.},
   volume={406},
   publisher={Chapman & Hall/CRC},
   date={2000},
   pages={214--229},}
\bib{KS}{article}{
   author={Kohn, Robert V.},
   author={Sternberg, Peter},
   title={Local minimisers and singular perturbations},
   journal={Proc. Roy. Soc. Edinburgh Sect. A},
   volume={111},
   date={1989},
   number={1-2},
   pages={69--84},}
\bib{LN}{article}{
   author={Li, Yi},
   author={Ni, Wei-Ming},
   title={Radial symmetry of positive solutions of nonlinear elliptic
   equations in ${\bf R}^n$},
   journal={Comm. Partial Differential Equations},
   volume={18},
   date={1993},
   number={5-6},
   pages={1043--1054},}
\bib{LZZ}{article}{
   author={Li, Tiejun},
   author={Zhang, Pingwen},
   author={Zhang, Wei},
   title={Nucleation rate calculation for the phase transition of diblock
   copolymers under stochastic Cahn-Hilliard dynamics},
   journal={Multiscale Model. Simul.},
   volume={11},
   date={2013},
   number={1},
   pages={385--409},}
\bib{Ma}{book}{
   author={Maggi, Francesco},
   title={Sets of finite perimeter and geometric variational problems},
   series={Cambridge Studies in Advanced Mathematics},
   volume={135},
   note={An introduction to geometric measure theory},
   publisher={Cambridge University Press, Cambridge},
   date={2012}}

\bib{Mo}{article}{
   author={Modica, Luciano},
   title={The gradient theory of phase transitions and the minimal interface
   criterion},
   journal={Arch. Rational Mech. Anal.},
   volume={98},
   date={1987},
   number={2},
   pages={123--142},}
\bib{MM}{article}{
   author={Modica, Luciano},
   author={Mortola, Stefano},
   title={Il limite nella $\Gamma $-convergenza di una famiglia di
   funzionali ellittici},
   language={Italian, with English summary},
   journal={Boll. Un. Mat. Ital. A (5)},
   volume={14},
   date={1977},
   number={3},
   pages={526--529},}
\bib{MJ}{article}{
   author={Morgan, Frank},
   author={Johnson, David L.},
   title={Some sharp isoperimetric theorems for Riemannian manifolds},
   journal={Indiana Univ. Math. J.},
   volume={49},
   date={2000},
   number={3},
   pages={1017--1041},}

\bib{NT}{article}{
   author={Ni, Wei-Ming},
   author={Takagi, Izumi},
   title={On the shape of least-energy solutions to a semilinear Neumann
   problem},
   journal={Comm. Pure Appl. Math.},
   volume={44},
   date={1991},
   number={7},
   pages={819--851},}
   \bib{Sc}{article}{
   author={Schlegel, H. Bernhard},
   title={Exploring potential energy surfaces for chemical reactions: an overview of
some practical methods},
   journal={J. Comput. Chem.},
   volume={24},
   date={2003},
   number={3},
   pages={1514--1527},}
\bib{S}{article}{
   author={Sternberg, Peter},
   title={The effect of a singular perturbation on nonconvex variational
   problems},
   journal={Arch. Rational Mech. Anal.},
   volume={101},
   date={1988},
   number={3},
   pages={209--260},}
\bib{SZ}{article}{
   author={Sternberg, Peter},
   author={Zumbrun, Kevin},
   title={Connectivity of phase boundaries in strictly convex domains},
   journal={Arch. Rational Mech. Anal.},
   volume={141},
   date={1998},
   number={4},
   pages={375--400},}
\bib{W}{article}{
   author={Wales, David J.},
   title={Energy landscapes: calculating pathways and rates},
   journal={International Reviews in Physical Chemistry},
   volume={25},
   date={2006},
   number={1-2},
   pages={237--282},}
\bib{WW}{article}{
   author={Wei, Juncheng},
   author={Winter, Matthias},
   title={Stationary solutions for the Cahn-Hilliard equation},
   language={English, with English and French summaries},
   journal={Ann. Inst. H. Poincar\'e Anal. Non Lin\'eaire},
   volume={15},
   date={1998},
   number={4},
   pages={459--492},}
\bib{WW2}{article}{
   author={Wei, Juncheng},
   author={Winter, Matthias},
   title={On the stationary Cahn-Hilliard equation: interior spike
   solutions},
   journal={J. Differential Equations},
   volume={148},
   date={1998},
   number={2},
   pages={231--267},}
\bib{WW3}{article}{
   author={Wei, Juncheng},
   author={Winter, Matthias},
   title={On the stationary Cahn-Hilliard equation: bubble solutions},
   journal={SIAM J. Math. Anal.},
   volume={29},
   date={1998},
   number={6},
   pages={1492--1518 (electronic)},}
\bib{WW4}{article}{
   author={Wei, Juncheng},
   author={Winter, Matthias},
   title={Multi-interior-spike solutions for the Cahn-Hilliard equation with
   arbitrarily many peaks},
   journal={Calc. Var. Partial Differential Equations},
   volume={10},
   date={2000},
   number={3},
   pages={249--289},}
\bib{ZCD}{article}{
   author={Zhang, Lei},
   author={Chen, Long-Qing},
   author={Du, Qiang},
   title={Morphology of critical nuclei in solid state phase transformations},
   journal={Phys. Rev. Lett.},
   volume={98},
   date={2007},
   pages={265703},}
\bib{ZCD2}{article}{
   author={Zhang, Lei},
   author={Chen, Long-Qing},
   author={Du, Qiang},
   title={Simultaneous prediction of morphologies of a critical nucleus
and an equilibrium precipitate in solids},
   journal={Commun. Comput. Phys.},
   volume={7},
   date={2010},
   number={4},
   pages={674-682},}

\bib{ZZD}{article}{
   author={Zhang, Lei},
   author={Zhang, Jingyan},
   author={Du, Qiang},
   title={Finding critical nuclei in phase transformations by shrinking
   dimer dynamics and its variants},
   journal={Commun. Comput. Phys.},
   volume={16},
   date={2014},
   number={3},
   pages={781--798},}

\end{biblist}
\end{bibdiv}

\end{document}